\newtheorem{theorem}{Theorem}[section]
\newtheorem{corollary}[theorem]{Corollary}
\newtheorem{lemma}[theorem]{Lemma}
\newtheorem{proposition}[theorem]{Proposition}
\newtheorem{problem}[theorem]{Problem}
\theoremstyle{definition}
\newtheorem{definition}[theorem]{Definition}
\newtheorem{example}[theorem]{Example}
\newcommand{\cl}[1]{\ensuremath{\overline{{#1}}}}
\newcommand{\continuum}{\ensuremath{\mathfrak{c}}}
\newcommand{\diam}[1]{\ensuremath{\sdiam{\left({#1}\right)}}}
\newcommand{\ep}{\varepsilon}
\newcommand{\ind}[1]{\ensuremath{\mbox{\boldmath{$1$}}_{#1}}}
\newcommand{\lint}[4]{\ensuremath{\int_{#1}^{#2}{#3}\:\mathrm{d}{#4}}}
\newcommand{\map}[3]{\ensuremath{{#1}:{#2}\to{#3}}}
\newcommand{\n}[1]{\ensuremath{\left\|{#1}\right\|}}
\newcommand{\ndot}{\ensuremath{\left\|\cdot\right\|}}
\newcommand{\ord}[2]{\ensuremath{\ordstart({#1},{#2})}}
\newcommand{\pow}[1]{\ensuremath\mathbf{P}({#1})}
\newcommand{\pn}[2]{\ensuremath{\left\|{#1}\right\|_{#2}}}
\newcommand{\pndot}[1]{\ensuremath{\left\|\cdot\right\|_{#1}}}
\newcommand{\ptndot}[1]{\ensuremath{\tri\cdot\tri_{#1}}}
\newcommand{\Q}{\mathbb{Q}}
\newcommand{\R}{\mathbb{R}}
\newcommand{\set}[2]{\ensuremath{\left\{{#1}\;:\;\,{#2}\right\}}}
\newcommand{\st}{($*$)}
\newcommand{\tri}{{\displaystyle |\kern-.9pt|\kern-.9pt|}}
\newcommand{\tn}[1]{\ensuremath{\left|\kern-.9pt\left|\kern-.9pt\left|{#1}\right|\kern-.9pt\right|\kern-.9pt\right|}}
\newcommand{\tndot}{\ensuremath{\left|\kern-.9pt\left|\kern-.9pt\left|\cdot\right|\kern-.9pt\right|\kern-.9pt\right|}}
\newcommand{\wone}{\ensuremath{\omega_1}}
\DeclareMathOperator{\aspan}{span}
\DeclareMathOperator{\card}{card}
\DeclareMathOperator{\ordstart}{ord}
\DeclareMathOperator{\sconv}{sconv}
\DeclareMathOperator{\sdiam}{diam}
\renewcommand{\geq}{\geqslant}
\renewcommand{\leq}{\leqslant}
\numberwithin{equation}{section}
\title[Gruenhage spaces and renorming theory]{Gruenhage spaces and their influence on Banach space renorming theory}
\author[R. J. Smith]{Richard J.~Smith}
\address[R. J. Smith]{School of Mathematics and Statistics, University College Dublin, Belfield, Dublin 4, Ireland}
\email{richard.smith@maths.ucd.ie}
\date{\today}
\begin{document}
\begin{abstract}
	In a paper from 1987, Gruenhage defined a class of topological spaces that now bear his name, and used it to solve a problem of Talagrand on the existence of dense $G_\delta$ metrizable subsets of Gul'ko compact spaces. Gruenhage's paper became highly influential among researchers in renorming theory, a branch of Banach space theory. In this paper we survey Gruenhage and related spaces, and their interactions with renorming theory.
\end{abstract}

\dedicatory{This article is dedicated to the memory of Prof.~Gary F.~Gruenhage (1947 -- 2023).}
	
\keywords{Gruenhage space, renorming theory, strict convexity, Gul'ko compact, descriptive compact.}
\subjclass[2020]{Primary 46B20, 54D30}
\maketitle
	
\section{Introduction}\label{sect_intro}

Let $X$ be a topological space. In this paper we shall consider only Hausdorff spaces. We call a family $\mathscr{U}$ of subsets of $X$ \emph{$T_0$-separating} or simply \emph{separating} if, given distinct $x,y \in X$, there exists $U \in \mathscr{U}$ such that $\{x,y\} \cap U$ is a singleton (so in general it is not possible to choose $U$ so that $\{x,y\} \cap U=\{x\}$). Given a family $\mathscr{U}$ of subsets of $X$ and $x \in X$, we write
	\[
	 \ord{x}{\mathscr{U}} = \card\set{U \in \mathscr{U}}{x \in U}.
	\]

In his brief yet influential 1987 study of Gul'ko compact spaces, Gruenhage defined the following concept. 
	
\begin{definition}[cf.~\cite{gruenhage:87}*{p.~372}]\label{df:gruenhage_defn} A separating family $\mathscr{U}$ of $X$ is called \emph{$\sigma$-distributively point finite} if we can write $\mathscr{U}=\bigcup_{n \in \omega} \mathscr{U}_n$ in such a way that, given distinct $x,y \in X$
\begin{enumerate}[label={\upshape{(\roman*)}}]
	\item $\{x,y\} \cap U$ is a singleton for some $n \in \omega$ and $U \in \mathscr{U}_n$;
	 \item $\ord{x}{\mathscr{U}_n}$ or $\ord{y}{\mathscr{U}_n}$ is finite.
\end{enumerate}
\end{definition}

The motivation behind this definition will be explained in \Cref{sect:prelim}. Having isolated this property, Gruenhage answered positively a question first posed by Talagrand \cite{talagrand:79}*{p.~430} and asked again in \cite{comfort:negrepontis:82}*{p.~206} and \cite{negrepontis:84}*{p.~1111}. It was proved in \cite{namioka:74}*{Corollary 4.2} and later in \cite{benyamini:rudin:wage:77}*{Theorem 4.3} that every Eberlein compact space admits a dense $G_\delta$ metrizable subspace. Talagrand asked whether every Gul'ko compact space also enjoyed this property. Given the appropriate characterization of Gul'ko compact spaces, it is straightforward to show that every such space admits a family as in \Cref{df:gruenhage_defn} (see \Cref{th:gulko} and remarks thereafter). With this in mind, Gruenhage could give a positive answer to Talagrand's question. 
	
\begin{theorem}[{\cite{gruenhage:87}*{Theorem 1}}]\label{th:gruenhage}
Every compact space which admits a $\sigma$-distributively point-finite separating open cover has a dense $G_\delta$ metrizable subspace.
\end{theorem}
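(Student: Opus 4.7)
The plan is to isolate, for each $n \in \omega$, an open set $W_n \subseteq K$ on which the family $\mathscr{U}_n$ is well-behaved, and then to show that $M := \bigcap_n W_n$ is a dense $G_\delta$ metrizable subspace of $K$. Since each $\mathscr{U}_n$ consists of open sets, the order function $x \mapsto \ord{x}{\mathscr{U}_n}$ is lower semicontinuous, and the level sets $A_{n,k} := \set{x \in K}{\ord{x}{\mathscr{U}_n} \leq k}$ are closed. I would take $W_n := \bigcup_{k \in \omega} \intr(A_{n,k})$, on which $\mathscr{U}_n$ is locally of bounded order, and hence locally finite.

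The principal obstacle is establishing density of $M$. In the purely point-finite (Eberlein) case one has $K = \bigcup_k A_{n,k}$, and Baire's theorem immediately forces each $W_n$ to be dense. Under the weaker condition (ii) of \Cref{df:gruenhage_defn}, individual $W_n$ may fail to be dense, so density must be argued at the level of the intersection. Suppose for contradiction that a non-empty open $V \subseteq K$ satisfies $V \cap M = \emptyset$; then $V \subseteq \bigcup_n (K \setminus W_n)$ and Baire category in the open (hence Baire) subspace $V$ yields a non-empty open $V' \subseteq V$ and an $n_0$ with $V' \subseteq K \setminus W_{n_0}$. Consequently every open subset of $V'$ contains points of arbitrarily high $\mathscr{U}_{n_0}$-order. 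A Cantor scheme construction---selecting, at each node $s \in 2^{<\omega}$, a new $U_s \in \mathscr{U}_{n_0}$ with $\overline{V_s} \subseteq U_s$ and splitting into disjoint open successors---produces $2^{\aleph_0}$ distinct points in $V'$ each lying in infinitely many members of $\mathscr{U}_{n_0}$, so that condition (ii) fails at level $n_0$ for every pair from this set. Pigeonholing the countably many remaining levels, together with the compactness of the closed level sets $A_{n',k}$, must then produce a pair of points for which condition (ii) fails at \emph{every} level, contradicting the $\sigma$-distributive property. Orchestrating this diagonal/pigeonhole step so that the tree-construction meshes with the scarcity of levels is the hardest part.

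Once density of $M$ is secured, $M$ is a $G_\delta$ subspace of $K$ on which each restricted family $\mathscr{U}_n \restrict{M}$ is locally finite, and condition (i) guarantees that $\mathscr{U} \restrict{M}$ is $T_0$-separating on $M$. Thus $\mathscr{U} \restrict{M}$ is a $\sigma$-locally finite, $T_0$-separating open cover of the regular space $M$. Promoting this cover to a $\sigma$-locally finite base---by taking suitable finite intersections within each locally finite level $\mathscr{U}_n \restrict{M}$---and applying the Nagata--Smirnov metrization theorem yields that $M$ is metrizable.
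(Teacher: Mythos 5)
Your strategy fails at the final metrization step, and the paper's own \Cref{ex:sigma_Q} refutes it. For $K=\sigma\Q\cup\{\infty\}$ the separating family splits into \emph{singleton} families $\mathscr{U}_q=\{U_q\}$ (add $\{K\}$ as one more singleton family to make it a cover), so every point has $\mathscr{U}_q$-order at most $1$, every level set $A_{q,1}$ equals $K$, every $W_q$ equals $K$, and your $M=\bigcap_q W_q$ is the whole space --- a scattered compact space of cardinality $\continuum$, hence not metrizable. The root cause is that a $T_0$-separating, $\sigma$-locally finite (here even countable) family of open sets is very far from being a base, and closing each level under finite intersections does not produce one: $T_0$-separation only says the sets jointly distinguish points, not that they localize the topology, whereas Nagata--Smirnov needs a genuine $\sigma$-locally finite \emph{base}. (If finite intersections of a countable separating open family always gave a base, every Gruenhage space of cardinality at most $\continuum$ would be second countable by \Cref{pr:gruenhage_continuum}.) Two further problems: boundedness of the order function near a point gives point-finiteness there, not local finiteness (infinitely many pairwise disjoint open sets accumulating at a point all have order $\leq 1$); and the ``pigeonhole over the remaining levels'' in your density argument is left entirely open --- since the $n$ in conditions (i) and (ii) of \Cref{df:gruenhage_defn} is the \emph{same} $n$, exhibiting continuum many points of infinite $\mathscr{U}_{n_0}$-order contradicts nothing until you also control every level at which some pair is separated, and I do not see how the sketched Cantor scheme delivers that.

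For comparison, the paper does not prove the theorem from the order functions at all: it obtains it from the chain \Cref{pr:Gruenhage_implies_star} (Gruenhage implies {\st}), \Cref{pr:star_implies_fragmentable} ({\st} implies fragmentable) and Ribarska's \Cref{th:fragmentable}~\ref{th:fragmentability_dense_Gdelta}. The ingredient your approach lacks, and which fragmentability supplies, is a mechanism for converting separation into localization: \Cref{th:fragmentability} replaces the separating families by increasing \emph{well-ordered} families of open sets, from which one builds a complete metric $d$, finer than the topology, that fragments $K$; the dense $G_\delta$ metrizable subspace is then the set of points of continuity of the identity from $(K,\tau)$ to $(K,d)$, not a set defined by order functions. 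A direct proof along your lines would have to replace ``$\mathscr{U}_n$ has bounded order near $x$'' by a condition strong enough to yield a local base at $x$ within $M$.
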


Clearly, if $\mathscr{U}$ is separating then $X\setminus\bigcup\mathscr{U}$ is a singleton at most, so we can easily replace the open cover in \Cref{th:gruenhage} by a family of open sets.
	
We note that a little before this result was published, independently, Leiderman showed that every Gul'ko compact space admits a dense metrizable subspace \cite{leiderman:85}. Also, very soon after \Cref{th:gruenhage} appeared, it was generalized to the much wider class of compact \emph{fragmentable} spaces (see \Cref{th:fragmentable} \ref{th:fragmentability_dense_Gdelta} below). Nevertheless, the idea of $\sigma$-distributively point finite families found other applications, notably forming a key catalyst in a perhaps unexpected area, namely the renorming theory of Banach spaces and specifically in the construction of so-called \emph{strictly convex dual norms}.

\begin{definition}\label{df:gruenhage_space}
We shall call a topological space $X$ \emph{Gruenhage} if it admits a family of open sets satisfying \Cref{df:gruenhage_defn}. If $X$ is in addition compact we shall call it \emph{Gruenhage compact}. 
\end{definition}

In this paper, we chart the development of these ideas over recent decades. It is designed to be reasonably self-contained and accessible to a specialist in topology. In \Cref{sect:prelim} we provide the necessary Banach space background and context, highlighting along the way some of the deep interactions between topology and functional analysis (which led to the isolation and study of Eberlein and Gul'ko compact spaces mentioned above), and touching on how topology came to play an increasingly important role in renorming theory. A background in functional analysis of the level of a first graduate course is assumed.

For further context and motivation, we provide in \Cref{sect:descriptive} a short account of so-called descriptive compact spaces, which form a class in between those of Gul'ko and Gruenhage compact spaces. In \Cref{sect:top_properties,sect:funct_anal}, we present most of the known topological and functional-analytic properties of Gruenhage spaces. In \Cref{sect:descendents}, we cover some generalisations and descendents of Gruenhage spaces, and conclude with some open problems.

In 2011, I had the pleasure of meeting Prof.~Gruenhage at the 26th Summer Conference on Topology and its Applications, at City College of CUNY. We had a brief discussion about Gruenhage spaces, how they had influenced renorming theory, and more recent developments, which we continued in some email exchanges thereafter. I was very happy to see that he saw fit to include some of those more recent developments in the third volume of Recent Progress in General Topology \cite{gruenhage:14}*{Section 5}. One could regard \Cref{sect:descendents} as an expanded version of this section that includes some even more recent updates.

I have included certain proofs of results either in the interests of keeping the survey self-contained, or because I saw an opportunity to improve the originals or formulate them in a way that relied less on heavy-duty techniques from functional analysis, and thus make them more amenable to specialists in topology.

Finally, we note that some of the topics addressed in this survey are also treated in a Banach space context (in different ways) in \cites{fabian:montesinos:zizler:10,smith:troyanski:10}. For a treatment of these topics from the perspective of linearly ordered topological spaces and generalized ordered spaces, we refer the reader to \cite{bennett:lutzer:13}. For more recent results on finding dense metrizable subspaces of compact spaces, using very different approaches, please see \cites{leiderman:spadaro:todorcevic:22,todorcevic:24}.
	
\section{Banach spaces, topology and renorming theory}\label{sect:prelim}

For general information about Banach space theory, we refer the reader to \cite{fabian:11}, with elements of Chapters 3, 7, 8, 13 and 14 being of particular relevance here. For further information on the interactions between classes of Banach spaces and associated compact spaces (particularly Eberlein and Gul'ko compact spaces), we recommend \cite{fabian:97}. All undefined concepts and proofs of unattributed statements in this survey can be found in these books. All Banach spaces in this paper will be considered over $\R$. One learns in a first undergraduate course in functional analysis that the closed unit ball $B_X$ of a Banach space $X$ is compact in the norm topology if and only if $\dim X < \infty$. Much of functional analysis has to make do with this lack of compactness, relying instead on completeness or some additional structure such as separability or reflexivity or, more generally, compactness of a large enough subset of $X$ with respect to a locally convex topology on $X$. In this context, being ``large enough'' means the following. We shall say that $X$ is \emph{generated} by $K \subseteq X$ if $X$ is the norm-closed linear span of $K$, written $X=\cl{\aspan}^{\ndot}(K)$. Observe that a Banach space is separable if and only if it is generated by a norm-compact subset.  

Given a subset $F$ of the dual space $X^*$, we define $\sigma(X,F)$ to be the weakest topology on $X$ with respect to which every element of $F$ is continuous. This topology is Hausdorff if and only if $F$ is \emph{separating}, that is, given $x \in X$, there exists $f \in F$ satisfying $f(x) \neq 0$; hereafter we assume all such subsets $F$ have this property. Recall that there exists a canonical linear isometric embedding $J:X \to X^{**}$ given by $(Jx)(f)=f(x)$, $x \in X$, $f \in X^*$; usually we drop $J$ and simply regard $X$ as a subspace of its bidual $X^{**}$. With the above in mind, the \emph{weak} topology on $X$ and the \emph{weak}$^*$ topology on $X^*$ are given by $w=\sigma(X,X^*)$ and $w^* = \sigma(X^*,X)$, respectively. 

Given a compact space $K$, we denote by $C(K)$ the Banach space of continuous real-valued functions on $K$, equipped with the supremum norm $\pndot{\infty}$. By the Riesz representation theorem, we identify the dual space $C(K)^*$ with the space of signed Radon measures on $K$, equipped with the total variation norm $\pndot{1}$, which satisfies
\begin{equation}\label{eq:variation}
 \pn{\mu}{1} = \sup\set{\lint{K}{}{f}{\mu}}{f \in B_{C(K)}}.
\end{equation}
Given $\mu \in C(K)^*$, we denote by $|\mu|, \mu^+$ and $\mu^-$ the total variation and positive and negative parts of $\mu$, respectively.

Being weaker topologies, a given subset of $X$ or $X^*$ has a better chance of being compact with respect to $w$ or $w^*$, respectively. The unit ball $B_X$ is $w$-compact if and only if $X$ if $X$ is \emph{reflexive}, i.e.~the map $J$ above is surjective.

Regarding $w^*$-compactness, the Banach-Alaoglu theorem states that the dual unit ball $B_{X^*}$ is \emph{always} $w^*$-compact. Because (by definition) the elements of $X \subseteq X^{**}$ (and only these) are $w^*$-continuous when regarded as functionals on $X^*$, we obtain another canonical linear isometric embedding $T:X \to C(B_{X^*},w^*)$, defined again by evaluation: $Tx(f)=f(x)$, $x \in X$, $f \in B_{X^*}$.

This is the starting point of a rich theory that traces the deep connections between the structural and geometric properties of $X$ and the topological properties of $(B_{X^*},w^*)$. For example, $X$ is separable if and only if $(B_{X^*},w^*)$ is metrizable, and a compact space $K$ is metrizable if and only if $C(K)$ is separable. We shall mention some progressively larger classes of Banach spaces that include the separable and reflexive ones, together with their related classes of compact spaces.

We say that $X$ is \emph{weakly compactly generated} (WCG) if it is generated by a $w$-compact subset. Evidently all separable and reflexive spaces are weakly compactly generated, but there is a canonical counterexample to the converse of this statement. Given a set $\Gamma$, let $c_0(\Gamma)$ denote the Banach space of all functions $x:\Gamma \to \R$ such that, given $\ep>0$, there exists a finite set $F \subseteq \Gamma$ such that $|x(\gamma)| < \ep$ whenever $\gamma\in\Gamma\setminus F$. We equip $c_0(\Gamma)$ with the supremum norm $\pndot{\infty}$. Given $\gamma \in \Gamma$, define the standard unit vector $e_\gamma:\Gamma\to\R$ by $e_\gamma(\gamma')=1$ if $\gamma'=\gamma$, and $e_\gamma(\gamma')=0$ otherwise. The set $K:=\{0\} \cup \set{e_\gamma}{\gamma \in \Gamma}$ is $w$-compact because its only $w$-accumulation point is $0$, and evidently $c_0(\Gamma)=\cl{\aspan}^{\pndot{\infty}}(K)$. However, $c_0(\Gamma)$ is non-separable whenever $\Gamma$ is uncountable, and non-reflexive whenever $\Gamma$ is infinite.

In a sense, $c_0(\Gamma)$ is the archetypal WCG space. If $X$ is WCG then there exists a set $\Gamma$ and a bounded linear injection $T:X\to c_0(\Gamma)$. In particular, it follows that $K$ must be (linearly) homeomorphic to the $w$-compact set $T(K) \subseteq c_0(\Gamma)$. We say that a compact space $K$ is \emph{Eberlein compact} if it is homeomorphic to a $w$-compact subset of a Banach space or, equivalently, a $w$-compact subset of $c_0(\Gamma)$ for some set $\Gamma$. A Banach space $X$ is a subspace of a WCG space if and only if $(B_{X^*},w^*)$ is Eberlein compact (the class of WCG spaces is not hereditary:~if $Y$ is a closed linear subspace of a WCG space $X$, then $Y$ need not be WCG), and $C(K)$ is WCG if and only if the compact space $K$ is Eberlein.

More generally, a Banach space $X$ is called \emph{weakly countably determined} (WCD) or a \emph{Va\v s\'ak space} if there are $w^*$-compact subsets $K_n$, $n \in \omega$, of the bidual $X^{**}$ such that for every $x \in X$ and $\xi \in X^{**}\setminus X$, there exists $n \in \omega$ satisfying $\{x,\xi\} \cap K_n = \{x\}$. Every WCG space $X$ is WCD because, if $X=\cl{\aspan}^{\ndot}(K)$ for some $w$-compact $K \subseteq X$, it can be shown that the $w^*$-compact subsets $K_{n,m}:= nK + 2^{-m}B_{X^{**}}$ of $X^{**}$, $m,n \in \omega$, fulfil the above requirement. It is relatively straightforward to show that (unlike WCG spaces) the class of WCD spaces is hereditary. A compact space $K$ is said to be \emph{Gul'ko compact} if $C(K)$ is WCD. A Banach space $X$ is WCD if and only if $(B_{X^*},w^*)$ is Gul'ko compact.

To see the motivation behind \Cref{df:gruenhage_defn}, we present the following purely topological characterizations of Eberlein and Gul'ko compact spaces. Recall that a family $\mathscr{U}$ of sets in a topological space is \emph{point finite} if $\ord{x}{\mathscr{U}}$ is finite for all $x \in X$, and $\mathscr{U}$ is \emph{$\sigma$-point finite} if it can be written as $\mathscr{U}=\bigcup_{n \in \omega}\mathscr{U}_n$, where each $\mathscr{U}_n$ is point finite.
	
\begin{theorem}[\cite{rosenthal:74}*{Theorem 3.1}]\label{th:rosenthal}
A compact space is Eberlein if and only if it admits a $\sigma$-point finite separating family consisting of open $F_\sigma$ sets. 
\end{theorem}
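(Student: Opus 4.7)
The plan is a direct construction in each direction.

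For the forward direction, realize $K$ as a $w$-compact subset of some $\czerok{\Gamma}$. For each positive rational $r$ form
\[
 \mathscr{U}_r = \set{\{x \in K : x(\gamma) > r\}}{\gamma \in \Gamma} \cup \set{\{x \in K : x(\gamma) < -r\}}{\gamma \in \Gamma}.
\]
Each member is $w$-open, being the preimage of an open half-line under the weakly continuous coordinate evaluation $x \mapsto x(\gamma)$, and $F_\sigma$ as a countable union of preimages of closed half-lines. Each $\mathscr{U}_r$ is point-finite, because any $x \in \czerok{\Gamma}$ has only finitely many coordinates of modulus exceeding $r$, so $\mathscr{U} = \bigcup_r \mathscr{U}_r$ is $\sigma$-point-finite. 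For $T_0$-separation, given distinct $x, y \in K$ one picks $\gamma$ with $x(\gamma) \neq y(\gamma)$ and a positive rational $r$ for which exactly one of $x(\gamma), y(\gamma)$ lies in the corresponding open half-line (a short case split on signs).

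For the converse, suppose $K$ admits a separating family $\mathscr{U} = \bigcup_n \mathscr{U}_n$ of open $F_\sigma$ sets with each $\mathscr{U}_n$ point-finite. Write each $U \in \mathscr{U}_n$ as $\bigcup_k F_{U,k}^n$ with $F_{U,k}^n$ closed, and use Urysohn's lemma to obtain continuous $f_{n,U,k} : K \to [0, 2^{-n-k}]$ equal to $2^{-n-k}$ on $F_{U,k}^n$ and vanishing off $U$. With $\Gamma = \set{(n,U,k)}{n \in \omega,\, U \in \mathscr{U}_n,\, k \in \omega}$ define the coordinate map $T : K \to \R^\Gamma$ by $T(x)_{n,U,k} = f_{n,U,k}(x)$. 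The essential properties are then: (a) $T(x) \in \czerok{\Gamma}$, since the uniform decay $|T(x)_{n,U,k}| \leq 2^{-n-k}$ confines threshold-exceeding entries to finitely many pairs $(n,k)$, while the point-finiteness of each $\mathscr{U}_n$ handles the $U$-index for each such pair; (b) $T$ is coordinatewise continuous and $T(K)$ is norm-bounded; (c) $T$ is injective, because if $U \in \mathscr{U}_n$ meets $\{x,y\}$ in $\{x\}$, then $x$ lies in some $F_{U,k}^n$ and $T(x)_{n,U,k} > 0 = T(y)_{n,U,k}$.

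The crux lies in upgrading the obvious pointwise compactness of $T(K)$ (immediate from coordinatewise continuity of $T$ on the compact $K$) to weak compactness inside $\czerok{\Gamma}$. Here I would appeal to the standard fact that the pointwise and weak topologies coincide on norm-bounded subsets of $\czerok{\Gamma}$, which follows from norm-density of the finitely supported functionals in $\lpk{1}{\Gamma} = \dual{\czerok{\Gamma}}$. With this in hand, $T$ is a continuous injection from the compact $K$ into the Hausdorff space $(\czerok{\Gamma}, w)$, hence a homeomorphism onto its weakly compact image, realizing $K$ as Eberlein compact. The main obstacle I anticipate is precisely this last topological upgrade; the decay factors $2^{-n-k}$ are what make it possible despite the potentially uncountable size of each $\mathscr{U}_n$.
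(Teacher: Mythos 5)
Your argument is correct, and it is essentially the classical proof of Rosenthal's characterization; the paper itself states \Cref{th:rosenthal} only as a cited result from \cite{rosenthal:74} and gives no proof, so there is nothing in the survey to compare against line by line. Both directions check out: the forward direction's families $\mathscr{U}_r$ are open, $F_\sigma$ (writing $\{x:x(\gamma)>r\}=\bigcup_m\{x:x(\gamma)\geq r+1/m\}$), point-finite by the definition of $c_0(\Gamma)$, and $T_0$-separating after the sign case-split; in the converse, the decay factors $2^{-n-k}$ together with point-finiteness of each $\mathscr{U}_n$ do place $T(K)$ in $c_0(\Gamma)$, and the upgrade from pointwise to weak compactness via norm-density of the finitely supported functionals in $\ell_1(\Gamma)=c_0(\Gamma)^*$ is the standard and correct way to finish.
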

	
By weakening the $\sigma$-point finiteness property we can obtain an analogous characterization of Gul'ko compact spaces. Following \cite{sokolov:84}*{Definition 3}, we say that a family $\mathscr{U}$ of subsets of $X$ is \emph{weakly $\sigma$-point-finite} if we can write $\mathscr{U}=\bigcup_{n \in \omega} \mathscr{U}_n$ with the property that, given $x \in X$ and $U \in \mathscr{U}$, there exists $n \in \omega$ satisfying $U \in \mathscr{U}_n$ and $\ord{x}{U_n}$ is finite.

Sokolov used this concept to provide a characterization of Gul'ko compact spaces that closely resembles \Cref{th:rosenthal}.
	
\begin{theorem}[{\cite{sokolov:84}*{Theorem 8}}]\label{th:gulko}
 A compact space is Gul'ko if and only if it admits a weakly $\sigma$-point-finite separating cover consisting of open $F_\sigma$ sets.
\end{theorem}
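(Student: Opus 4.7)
The plan is to prove both implications by passing through the classical coordinate characterisation of Gul'ko compact spaces (due to Gul'ko and Mercourakis): $K$ is Gul'ko if and only if $K$ embeds homeomorphically into a subset of $c_0(\Gamma)$ (with the product topology, for some set $\Gamma$) admitting a decomposition $\Gamma = \bigcup_{n \in \omega} \Gamma_n$ with the \emph{Talagrand property} that, for every $x \in K$ and every $\gamma \in \Gamma$ with $x(\gamma) \neq 0$, there is $n$ with $\gamma \in \Gamma_n$ and $\{\gamma' \in \Gamma_n : x(\gamma') \neq 0\}$ finite. This is the bridge between the Banach-space definition of Gul'ko compactness (through $C(K)$ being WCD, i.e.\ through $w^*$-compact subsets of $C(K)^{**}$) and the kind of point-counting structure that governs \Cref{df:gruenhage_defn} and its relatives; I would cite it and then verify both directions combinatorially.

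For the forward direction, realise $K \subseteq c_0(\Gamma)$ as above and, for each $\gamma \in \Gamma$ and $q \in \Q^+$, introduce the open $F_\sigma$ sets
\[
 U_{\gamma,q}^+ = \{x \in K : x(\gamma)>q\}, \qquad U_{\gamma,q}^- = \{x \in K : x(\gamma)<-q\},
\]
grouped as $\mathscr{U}_{n,q,\pm} = \{U_{\gamma,q}^{\pm} : \gamma \in \Gamma_n\}$. The resulting family $\mathscr{U}$ separates points of $K$ via a rational strictly between two distinct coordinate values, and weak $\sigma$-point-finiteness follows: given $x \in K$ and $U_{\gamma,q}^{\pm} \in \mathscr{U}$, if $x(\gamma) \neq 0$ the Talagrand property yields $n$ with $\gamma \in \Gamma_n$ and $\{\gamma' \in \Gamma_n : x(\gamma') \neq 0\}$ finite, whence $\ord{x}{\mathscr{U}_{n,q,\pm}}<\infty$; if $x(\gamma)=0$, any $n$ containing $\gamma$ works, because $x \in c_0(\Gamma)$ makes $\{\gamma' : |x(\gamma')|>q\}$ finite.

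For the converse, given a weakly $\sigma$-point-finite separating open $F_\sigma$ cover $\mathscr{U} = \bigcup_n \mathscr{U}_n$ of $K$, write each $U \in \mathscr{U}$ as $U = \bigcup_m F_{U,m}$ with $F_{U,m}$ closed and, by Urysohn, pick continuous $f_{U,m} \colon K \to [0,1]$ with $f_{U,m} \equiv 1$ on $F_{U,m}$ and $f_{U,m} \equiv 0$ off $U$. Then $\Phi(x) = (f_{U,m}(x))_{U,m}$ embeds $K$ into $[0,1]^{\mathscr{U} \times \omega}$ (injectivity from separation by $\mathscr{U}$), and the slicing $\Gamma_{n,m} = \mathscr{U}_n \times \{m\}$ inherits the Talagrand property: for any $x \in K$ and any $(U_0,m_0)$ with $\Phi(x)(U_0,m_0) \neq 0$ (so $x \in U_0$), weak $\sigma$-point-finiteness supplies $n$ with $U_0 \in \mathscr{U}_n$ and $\ord{x}{\mathscr{U}_n}<\infty$, and then the number of coordinates in $\Gamma_{n,m_0}$ active at $\Phi(x)$ is bounded by $\ord{x}{\mathscr{U}_n}$, since $f_{U,m_0}(x)>0$ forces $x \in U$. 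A short aside shows that the set $\{U \in \mathscr{U}: x\in U\}$ is in fact countable (bounded by $\sum_n \ord{x}{\mathscr{U}_n}$ over the $n$ witnessing finiteness), so $\Phi(x)$ lies in the $\Sigma$-product of $\mathscr{U}\times\omega$; the version of the coordinate characterisation that deals with $\Sigma$-products then applies to conclude that $K$ is Gul'ko.

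The main obstacle is the coordinate characterisation itself: pinning down the non-trivial equivalence between the Banach-space WCD description of $C(K)$ and the combinatorial ``Talagrand'' embedding into a product of lines is the real content, and it rests on standard but somewhat technical machinery of Gul'ko, Talagrand and Mercourakis that I would invoke as a black box. Once that bridge is in place, the two constructions above are routine combinatorial verifications.
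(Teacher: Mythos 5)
The paper does not actually prove this theorem: it is imported verbatim from Sokolov with a citation, so there is no internal argument to compare yours against. Judged on its own terms, your proposal has a genuine gap, and it sits precisely in the black box you invoke. The characterisation you state --- that $K$ is Gul'ko if and only if it embeds into $c_0(\Gamma)$ with the pointwise topology together with a Talagrand-type decomposition of $\Gamma$ --- is false in the ``only if'' direction: a pointwise-compact subset of $c_0(\Gamma)$ is already Eberlein compact (this is essentially the Amir--Lindenstrauss/Rosenthal picture recalled around \Cref{th:rosenthal}), whereas Talagrand's example shows that Gul'ko does not imply Eberlein. The correct coordinate characterisation (Mercourakis's Theorem 3.3, which the paper points to in the sentence immediately after \Cref{th:gulko}) embeds $K$ into $[0,1]^\Gamma$, i.e.\ into a $\Sigma$-product where points have countable supports but need not vanish at infinity.

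This is not a mere misattribution, because your verification of weak $\sigma$-point-finiteness in the forward direction uses the $c_0$ property in an essential way. Sokolov's definition demands that for \emph{every} pair $(x,U)$ --- including $U=U^{\pm}_{\gamma,q}$ with $x(\gamma)=0$ --- there exist $n$ with $U\in\mathscr{U}_n$ and $\ord{x}{\mathscr{U}_n}$ finite. For $\gamma\notin\supp(x)$ the Talagrand property gives you nothing (it only controls indices $n$ attached to coordinates in $\supp(x)$), and your fallback ``any $n$ containing $\gamma$ works because $x\in c_0(\Gamma)$ makes $\{\gamma':|x(\gamma')|>q\}$ finite'' is exactly the step that is unavailable for non-Eberlein Gul'ko compacta: $\gamma$ may lie only in pieces $\Gamma_n$ meeting $\{\gamma':|x(\gamma')|>q\}$ in an infinite set. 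As written, your forward direction establishes Rosenthal's Eberlein characterisation rather than Sokolov's. The converse direction (Urysohn functions indexed by $\mathscr{U}\times\omega$, with supports in $\Gamma_{n,m}$ bounded by $\ord{x}{\mathscr{U}_n}$) is sound modulo a correctly stated sufficiency half of the coordinate characterisation, and that is indeed how the ``if'' direction is usually run; but the ``only if'' direction needs the genuinely weaker Mercourakis/Sokolov coordinate structure, and handling the pairs with $x\notin U$ is where the real content of the theorem lives.
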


(We note that another topological characterization of Gul'ko compact spaces is given in \cite{mercourakis:87}*{Theorem 3.3}.) Every separating weakly $\sigma$-point-finite family $\mathscr{U}=\bigcup_{n \in \omega}\mathscr{U}_n$ is $\sigma$-distributively point finite. Indeed, given distinct $x,y \in X$, as $\mathscr{U}$ is separating there exists $U \in \mathscr{U}$ having the property that $\{x,y\} \cap U$ is a singleton. If $x \in U$ then there exists $n \in \omega$ such that $U \in \mathscr{U}_n$ and $\ord{x}{U_n}$ is finite; we reach a similar conclusion if $y \in U$. Therefore by Theorem \ref{th:gulko}, every Gul'ko compact space is Gruenhage \cite{gruenhage:87}*{p.~372}. 

Concerning the classes of compact spaces mentioned in this survey, we have the following chain of implications: 
\begin{align*}
 \text{metrizable }\Rightarrow\text{ Eberlein }\Rightarrow\text{ Gul'ko }&\Rightarrow\text{ descriptive }\\
 &\Rightarrow\text{ Gruenhage }\Rightarrow\text{ {\st} } \Rightarrow\text{ fragmentable.}
\end{align*}
We define descriptive compact spaces, fragmentable spaces and spaces with property {\st} in later sections. The spaces in these classes could be said to be \emph{generalised metrizable spaces}, in the sense that they retain (increasingly flimsy) vestiges of metrizability (concerning this wide topic, the reader is encouraged to consult Gruenhage's authoritative surveys, e.g.~\cites{gruenhage:84,gruenhage:14}). None of the implications above are reversible. The $1$-point compactification of an uncountable discrete space is Eberlein and non-metrizable, and the first example of a Gul'ko, non-Eberlein compact space is shown in \cite{talagrand:79}*{Th\'eor\`eme 4.3}. The irreversibility of the other implications is demonstrated in later sections.

We address renorming theory in the final part of the section. In this theory, one attempts to replace the given canonical norm on a Banach space by an equivalent norm that possesses some superior geometric property, such as some degree of smoothness or strict convexity. Whether or not this is possible will depend on the structure of the given Banach space. Chapter VII, \S 4 of M.~Day's book \cite{day:73} is entitled `Isomorphisms to improve the norm', which is a good way of putting it. We refer the reader to \cite{deville:godefroy:zizler:93} and the more up-to-date \cite{guirao:montesinos:zizler:22} for very comprehensive treatments of this theory. Here, we will focus almost exclusively on the following classical geometric property of norms.

\begin{definition}[\cite{clarkson:36}*{p.~404}]\label{df:strictly_convex}
 A norm $\ndot$ on a Banach space $X$ is called \emph{strictly convex} (or \emph{rotund}) if $x=y$ whenever $\n{x}=\n{y}=\n{\frac{1}{2}(x+y)}$. 
\end{definition}

Equivalently, the norm $\ndot$ is strictly convex if the unit sphere of $X$ with respect to $\ndot$ contains no non-trivial straight line segments, i.e.~if $x \neq y$, $\n{x}=\n{y}=1$ and $t \in (0,1)$, then $\n{tx+(1-t)y} < 1$. There are a number of stronger cousins of strict convexity, such as uniform rotundity or local uniform rotundity, which we touch on only very briefly. By the parallelogram identity, it follows that the canonical norm on Hilbert space is uniformly rotund and thus strictly convex. In \cite{lindenstrauss:76}*{Q.18}, Lindenstrauss asked for a characterization of those Banach spaces which have an equivalent strictly convex norm. Much of the research presented in this survey was motivated by this question, in particular with regard to strictly convex \emph{dual} norms on dual Banach spaces. We call an equivalent norm $\tndot$ on a dual space $X^*$ a \emph{dual} norm if there exists an equivalent norm on $X$ (again denoted $\tndot$) whose dual norm is $\tndot$. One incentive for looking for equivalent strictly convex dual norms is that their predual norms are \emph{G\^ateaux smooth} \cite{fabian:11}*{Corollary 7.23 (i)}. In addition, confining our attention to dual spaces makes life easier because we have the $w^*$-compactness of the dual unit balls at our disposal. 

In 1987, Mercourakis proved that if $X$ is WCD then $X^*$ admits an equivalent strictly convex dual norm \cite{mercourakis:87}*{Theorem 4.6}. This result was obtained using what we can now call classical functional-analytic renorming techniques. In more recent years, starting with the Murcia-Valencia School, mathematicians recognised the importance of bringing to bear purely topological techniques in renorming theory. Let $X$ be a Banach space and $\tau$ a locally convex topology on $X$ (e.g. the norm or $w$-topology, or the $w^*$-topology if $X$ is a dual space). We say that a norm $\ndot$ on $X$ is \emph{$\tau$-locally uniformly rotund} ($\tau$-LUR) if, given $x \in X$ and $x_n \in X$, $n \in \omega$, satisfying $\n{x}=\n{x_n}=1$ for all $n \in \omega$ and $\lim_n \n{x+x_n}=2$, we must have $\tau$-$\lim_n x_n = x$ (cf.~\cite{lovaglia:55}*{Definition 0.2}). Such norms are always strictly convex:~given $x,y \in X$ satisfying $\n{x}=\n{y}=\n{\frac{1}{2}(x+y)}=1$, simply set $x_n=y$ for all $n \in \omega$. One of the early successes of the topological approach to renorming theory was to show that if $X$ admitted an equivalent $w$-LUR norm, then $X$ admitted an equivalent $\ndot$-LUR (i.e.~LUR) norm \cite{molto:99}*{Main Theorem}.

We finish the section with some specific results about constructing norms that will be used in proofs later on. The first one is a key tool for identifying dual norms.

\begin{lemma}[\cite{fabian:11}*{Lemma 3.97}]\label{pr:lsc}
 Let $\tndot$ be an equivalent norm on the dual Banach space $X^*$. Then $\tndot$ is a dual norm if and only if it is $w^*$-lower semicontinuous.
\end{lemma}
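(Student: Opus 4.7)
The plan is to reduce the equivalence to the assertion that $\tndot$ is $w^*$-lower semicontinuous if and only if its closed unit ball $B=\{f \in X^*: \tn{f}\leq 1\}$ is $w^*$-closed, and then to recognise $B$ as the polar of a suitable body in $X$ via the bipolar theorem. The key observation (used in both directions) is that for any seminorm, $w^*$-lower semicontinuity is equivalent to the $w^*$-closedness of every sublevel set $\{f: \tn{f} \leq \alpha\} = \alpha B$, and by homogeneity this reduces to $\alpha=1$.

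For the forward direction, assume there is an equivalent norm $\tndot$ on $X$ whose dual norm is the given $\tndot$ on $X^*$. Let $C=\{x \in X: \tn{x} \leq 1\}$ be its unit ball. By the definition of the dual norm, $\tn{f} = \sup_{x \in C} f(x)$, so
\[
 B \;=\; C^{\circ} \;=\; \bigcap_{x \in C}\{f \in X^*: f(x) \leq 1\}.
\]
Each half-space on the right is $w^*$-closed (since the evaluation functional $f \mapsto f(x)$ is $w^*$-continuous), so $B$ is $w^*$-closed and $\tndot$ is $w^*$-lower semicontinuous.

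For the converse, assume $\tndot$ is $w^*$-lower semicontinuous, so $B$ is $w^*$-closed. Since $\tndot$ is equivalent to the original dual norm, there exist $a,b > 0$ such that $aB_{X^*} \subseteq B \subseteq bB_{X^*}$. Consider the polar in $X$,
\[
 B_{\circ} \;=\; \{x \in X : f(x) \leq 1 \text{ for all } f \in B\}.
\]
This set is convex and symmetric (since $B$ is). The inclusion $aB_{X^*} \subseteq B$ forces $B_\circ \subseteq a^{-1}B_X$, so $B_\circ$ is norm-bounded; the inclusion $B \subseteq bB_{X^*}$ forces $b^{-1}B_X \subseteq B_\circ$, so $B_\circ$ is absorbing. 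Its Minkowski functional is therefore an equivalent norm $\tndot$ on $X$ with unit ball $B_\circ$. It remains to show that its dual norm has unit ball $B$, i.e.~that $(B_{\circ})^{\circ} = B$; by the bipolar theorem, $(B_{\circ})^{\circ}$ equals the $w^*$-closed convex hull of $B \cup \{0\}$, and since $B$ is already $w^*$-closed, convex, and contains $0$, this yields $(B_{\circ})^{\circ} = B$. Thus $\tndot$ is the dual of the norm just constructed on $X$.

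The only delicate point is the bipolar step, where one must be careful that the polar is taken with respect to the dual pairing $\langle X, X^*\rangle$ (so that the ambient weak topology on $X^*$ is exactly $w^*$); the $w^*$-closedness of $B$ supplied by the hypothesis is precisely what is needed to apply the theorem and conclude $(B_\circ)^\circ = B$ without additional closure.
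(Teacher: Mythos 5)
Your proof is correct and follows essentially the same route as the paper's: the forward direction exhibits $\tndot$ as a supremum of $w^*$-continuous evaluations (equivalently, its ball as an intersection of $w^*$-closed half-spaces), and the converse constructs the predual norm as the Minkowski functional of the polar body $B_\circ$ in $X$, which is exactly the paper's $|x|=\sup\set{f(x)}{f\in B}$. The only cosmetic difference is that you close the argument by citing the bipolar theorem to get $(B_\circ)^\circ=B$, whereas the paper runs the underlying Hahn--Banach separation argument by hand; your symmetry and boundedness/absorbency checks correctly justify that the one-sided bipolar returns $B$ itself.
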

	
\begin{proof}
If $\tndot$ is a dual norm then (with $\tndot$ also denoting the predual norm), we have
\[
 \tn{f} = \sup\set{f(x)}{x \in X,\,\tn{x} \leq 1}, \quad f \in X^*.
\]
Thus $\tndot$ is the supremum of a family of $w^*$-continuous functions $f \mapsto f(x)$, $\tn{x} \leq 1$, on $X^*$, so it is $w^*$-lower semicontinuous. Conversely, let $\tndot$ be $w^*$-lower semicontinuous. Then $B:=\set{f \in X^*}{\tn{f}\leq 1}$ is $w^*$-closed. The norm $|\cdot|$ on $X$ defined by
\[
 |x| = \sup\set{f(x)}{f \in B}, \quad x \in X,
\]
can be seen to be equivalent to the given norm on $X$, and its dual norm satisfies $|\cdot| \leq \tndot$ on $X^*$. Moreover, we have equality. Let $f \in X^*$ satisfy $\tn{f} > 1$, i.e.~$f \notin B$. Because $B$ is convex and $w^*$-closed, by the (topological) Hahn-Banach separation theorem \cite{fabian:11}*{Theorem 3.32}, there exists $x \in X$ and $\alpha \in \R$ such that $f(x) > \alpha \geq g(x)$ for all $g \in B$. Thus $0 < |x| \leq \alpha$ and $|f| \geq f(x/\alpha) > 1$. From this we quickly obtain equality, meaning that $\tndot$ is a dual norm.
\end{proof}

We observe the following useful inequality that holds for any seminorm $\ndot$ on a Banach space:
\begin{equation}\label{eq:parallelogram}
2\n{x}^2 + 2\n{y}^2 - \n{x+y}^2 \geq 2\n{x}^2 + 2\n{y}^2 - (\n{x}+\n{y})^2 = (\n{x}-\n{y})^2 \geq 0. 
\end{equation}

The next result is a technical lemma often used in the construction of strictly convex norms (and stronger variants).

\begin{lemma}[cf.~\cite{fabian:11}*{Fact 7.7 and Theorem 8.1}]\label{lm:parallelogram}
Suppose that $\pndot{n}$ is a sequence of seminorms on a Banach space $X$, such that the formula
\[
 \tn{x}^2 = \sum_{n = 0}^\infty \pn{x}{n}^2, \quad x \in X,
\]
defines an equivalent norm $\tndot$ on $X$. Then $\tn{x}=\tn{y}=\tn{\frac{1}{2}(x+y)}$ implies $\pn{x}{n}=\pn{y}{n}=\pn{\frac{1}{2}(x+y)}{n}$ for all $n \in \omega$.
\end{lemma}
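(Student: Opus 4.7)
The plan is to apply the parallelogram-type inequality \eqref{eq:parallelogram} termwise to each seminorm $\pndot{n}$, and then to exploit the elementary fact that a convergent series of non-negative real numbers vanishes only when each summand does.

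First, I would apply \eqref{eq:parallelogram} with $\ndot$ replaced by $\pndot{n}$, obtaining, for every $n \in \omega$,
\[
 2\pn{x}{n}^2 + 2\pn{y}{n}^2 - \pn{x+y}{n}^2 \geq (\pn{x}{n}-\pn{y}{n})^2 \geq 0.
\]
Since each of these summands is non-negative, I can sum over $n$ and appeal to the definition of $\tndot$ to get
\[
 \sum_{n=0}^{\infty} \bigl(2\pn{x}{n}^2 + 2\pn{y}{n}^2 - \pn{x+y}{n}^2\bigr) = 2\tn{x}^2 + 2\tn{y}^2 - \tn{x+y}^2.
\]
All three component series converge by the standing assumption that $\tndot$ is well-defined on $X$, so this rearrangement is legitimate.

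Next, I would bring in the hypothesis $\tn{x} = \tn{y} = \tn{\tfrac{1}{2}(x+y)}$, which rearranges to $\tn{x+y} = 2\tn{x} = 2\tn{y}$. Substituting this into the display above shows that the sum equals $4\tn{x}^2 - 4\tn{x}^2 = 0$. A sum of non-negative reals that equals zero forces every term to vanish, so for each $n \in \omega$,
\[
 (\pn{x}{n}-\pn{y}{n})^2 \leq 2\pn{x}{n}^2 + 2\pn{y}{n}^2 - \pn{x+y}{n}^2 = 0.
\]
This yields $\pn{x}{n} = \pn{y}{n}$, and plugging this back into the identity $2\pn{x}{n}^2 + 2\pn{y}{n}^2 = \pn{x+y}{n}^2$ gives $4\pn{x}{n}^2 = \pn{x+y}{n}^2$, hence $\pn{\tfrac{1}{2}(x+y)}{n} = \pn{x}{n} = \pn{y}{n}$, as required.

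There is no real obstacle here: once one notices the pointwise inequality \eqref{eq:parallelogram} can be applied to each seminorm and summed, the conclusion follows automatically from the fact that the non-negative quantities $2\pn{x}{n}^2 + 2\pn{y}{n}^2 - \pn{x+y}{n}^2$ and $(\pn{x}{n}-\pn{y}{n})^2$ must individually vanish. The only mildly subtle point is the termwise summation, which is justified precisely by the non-negativity of each term and the convergence of the three series defining $\tn{x}^2$, $\tn{y}^2$ and $\tn{x+y}^2$.
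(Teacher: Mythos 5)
Your proof is correct and is essentially the paper's own argument: both sum the non-negative quantities $2\pn{x}{n}^2+2\pn{y}{n}^2-\pn{x+y}{n}^2$, use the hypothesis to see the total is zero via \eqref{eq:parallelogram}, and conclude that each term vanishes, forcing $\pn{x}{n}=\pn{y}{n}=\pn{\frac{1}{2}(x+y)}{n}$. Your write-up merely spells out the final step (deriving $\pn{\frac{1}{2}(x+y)}{n}=\pn{x}{n}$ from $\pn{x+y}{n}^2=4\pn{x}{n}^2$) that the paper leaves implicit.
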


Note that the Cauchy-Schwarz inequality can be used to prove that $\tndot$ above satisfies the triangle inequality.

\begin{proof}
Let $\tn{x}=\tn{y}=\tn{\frac{1}{2}(x+y)}$. Then
\begin{align*}
 0 = 2\tn{x}^2 + 2\tn{y}^2 - \tn{x+y}^2 = \sum_{n=0}^\infty 2\pn{x}{n}^2 + 2\pn{y}{n}^2 - \pn{x+y}{n}^2
\end{align*}
implies $2\pn{x}{n}^2 + 2\pn{y}{n}^2 - \pn{x+y}{n}^2=0$ for all $n \in \omega$ by \eqref{eq:parallelogram}, whence $\pn{x}{n}=\pn{y}{n}$ for all $n$, again by \eqref{eq:parallelogram}. The conclusion follows.
\end{proof}

Let $\mathbf{P}$ be some geometric property of norms, such as strict convexity. It is necessary to point out that constructing a equivalent norm on $X^*$ satisfying $\mathbf{P}$ is often possible, while constructing a equivalent \emph{dual} norm on $X^*$ satisfying $\mathbf{P}$ might not be. 
For example, it has long been known that $C(\wone+1)^*$, where $\wone+1$ is endowed with its usual scattered compact interval topology, admits no equivalent strictly convex dual norm \cite{talagrand:86}*{Th\'eor\`eme 3} (see the discussion after \Cref{th:star_properties} for a quick proof of this). However, being scattered, by a result of W.~Rudin $C(\wone+1)^*$ is linearly isometric to $\ell_1(\wone+1)$ \cite{fabian:11}*{Theorem 14.24}. In turn, $\ell_1(\wone+1)$ embeds naturally into the Hilbert space $\ell_2(\wone+1)$. Hence we can define an equivalent norm $\ndot$ on $C(\wone+1)^*$ by setting $\n{\mu}^2=\pn{\mu}{1}^2+\pn{\mu}{2}^2$, $\mu \in \ell_1(\wone+1)$, where $\pndot{2}$ denotes the Hilbert space norm. By \Cref{lm:parallelogram} and the fact that Hilbert space norms are strictly convex, we see that $\ndot$ is also strictly convex.
	
Finally, we give an example of what is known as a \emph{transfer technique} in renorming theory. The following result is stated in much greater generality in \cite{deville:godefroy:zizler:93}.

\begin{proposition}[cf.~\cite{deville:godefroy:zizler:93}*{Theorem 2.1}]\label{th:transfer}
 Let a dual operator $T^*:Y^*\to X^*$ have dense range and let $Y^*$ admit an equivalent strictly convex dual norm. Then $X^*$ admits an equivalent strictly convex dual norm.
\end{proposition}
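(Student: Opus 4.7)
The plan is to build an equivalent dual norm on $X^*$ by combining $\ndot_{X^*}$ with a sequence of $w^*$-lower semicontinuous seminorms manufactured out of $\tndot$ and $T^*$; strict convexity will be extracted from that of $\tndot$ through \Cref{lm:parallelogram}, and the density hypothesis will be deployed only at the very end to close the argument.

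Concretely, for each $n \in \N$ I will set
\[
 q_n(f)^2 = \inf_{g \in Y^*} \bigl(\tn{g}^2 + n\n{f - T^*g}_{X^*}^2\bigr), \quad f \in X^*.
\]
Standard arguments should show that $q_n$ is a seminorm with $q_n \leq \sqrt{n}\,\ndot_{X^*}$ (take $g = 0$) and that the infimum is attained: any minimizing sequence lies in the $w^*$-compact ball $\{g : \tn{g} \leq \sqrt{n}\,\n{f}_{X^*}\}$, and one passes to a $w^*$-cluster point using that $T^*$ is $w^*$-continuous and that both $\tndot$ and $\ndot_{X^*}$ are $w^*$-lsc. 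The same compactness idea, this time applied to a $w^*$-convergent net $f_\alpha \to f$ with matching minimizers $g_\alpha$ living in a common $w^*$-compact ball, gives that each $q_n$ is $w^*$-lsc. I will then set
\[
 |f|^2 = \n{f}_{X^*}^2 + \sum_{n=1}^\infty 2^{-n} q_n(f)^2,
\]
which by the bounds above is an equivalent norm on $X^*$, and which is $w^*$-lsc as a countable sum of $w^*$-lsc nonnegative functions; \Cref{pr:lsc} then promotes it to an equivalent dual norm.

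For strict convexity I will apply \Cref{lm:parallelogram} to reduce the assumption $|f| = |h| = |\frac{1}{2}(f+h)|$ to $q_n(f) = q_n(h) = q_n(\frac{1}{2}(f+h))$ for every $n$. Taking minimizers $g_n^f$ and $g_n^h$ for $q_n(f)$ and $q_n(h)$ and testing $\frac{1}{2}(g_n^f + g_n^h)$ in the definition of $q_n(\frac{1}{2}(f+h))$, convexity of $\tndot^2$ and $\ndot_{X^*}^2$ forces equality throughout the inequalities; strict convexity of $\tndot$ then pins down $g_n^f = g_n^h =: g_n$, leaving the identity $c_n^2 := q_n(f)^2 = \tn{g_n}^2 + n\n{f - T^*g_n}_{X^*}^2$ (and the same with $h$ in place of $f$). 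At this final stage the density hypothesis enters: given $\epsilon > 0$, choose $g_\epsilon \in Y^*$ with $\n{f - T^*g_\epsilon}_{X^*} < \epsilon$; the estimate $c_n^2 \leq \tn{g_\epsilon}^2 + n\epsilon^2$ yields $\limsup_n c_n^2/n \leq \epsilon^2$, and $\epsilon$ being arbitrary, $c_n^2/n \to 0$. Then $n\n{f - T^*g_n}_{X^*}^2 \leq c_n^2$ forces $T^*g_n \to f$ in norm, and likewise $T^*g_n \to h$ in norm, whence $f = h$.

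The main obstacle I anticipate is the $w^*$-lower semicontinuity of the seminorms $q_n$, since an infimum of $w^*$-lsc functions is not $w^*$-lsc in general; this is precisely why the $w^*$-compactness of norm-bounded sets in $Y^*$, which confines the minimizers, is so important. Aside from that, the dense-range hypothesis plays no role in building $|\cdot|$ and is deployed exclusively in the final step, where it is exactly what forces the sequence $T^*g_n$ to track both $f$ and $h$ in norm.
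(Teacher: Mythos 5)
The paper itself gives no proof of \Cref{th:transfer} --- it is quoted from \cite{deville:godefroy:zizler:93}*{Theorem 2.1} --- and your argument is precisely the standard proof from that source: the infimal-convolution seminorms $q_n$, the $\ell_2$-sum promoted to a dual norm via \Cref{pr:lsc}, the reduction through \Cref{lm:parallelogram}, attainment and identification of the minimizers $g_n^f=g_n^h$ via \eqref{eq:parallelogram} and strict convexity of $\tn{\cdot}$, and the dense-range hypothesis entering only at the end to force $T^*g_n\to f$ and $T^*g_n\to h$ in norm. The one step to tighten is the $w^*$-lower semicontinuity of $q_n$: a $w^*$-convergent net $f_\alpha\to f$ need not be norm-bounded, so the minimizers $g_\alpha$ need not lie in a common $w^*$-compact ball as you assert; since each $q_n$ is convex and norm-continuous, the standard repair is to verify $w^*$-closedness of its sublevel sets only after intersecting with closed balls of $X^*$ (where your compactness argument does work) and then invoke the Krein--\v Smulian theorem. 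With that remark inserted, the proof is complete and correct.
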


\section{Descriptive compact spaces}\label{sect:descriptive}

Before we begin our tour of the properties of Gruenhage spaces, to provide further context and motivation, it is worth introducing the class of \emph{descriptive compact spaces}. The study of descriptive compact spaces was initiated in \cite{raja:03} and developed further in \cite{oncina:raja:04}; for the most part, we follow \cite{raja:03} in this section. Recall that a family $\mathscr{H}$ of pairwise disjoint subsets of $X$ is \emph{discrete} if, given $x \in X$, there exists an open set $U \ni x$ having the property that $U$ meets at most one member of $\mathscr{H}$. Of course, discrete families play a central role in metrizability theory and the general metrization theorem. However, they are a little too restrictive for our purposes. We say that the family $\mathscr{H}$ is \emph{isolated}, or \emph{relatively discrete}, if, given $x \in H \in \bigcup{\mathscr{H}}$, there exists an open set $U \ni x$ which meets no members of $\mathscr{H}$ except $H$; i.e.~$\mathscr{H}$ is isolated if it is discrete with respect to the subspace topology of $\bigcup \mathscr{H}$. If a family can be written as the union of countably many isolated subfamilies then we call it \emph{$\sigma$-isolated}. We say that the family $\mathscr{H}$ is a \emph{network} for $X$ if, given an open subset $U$ of $X$ and $x \in U$, we have $x \in H \subseteq U$ for some $H \in \mathscr{H}$. Fundamental facts about $\sigma$-isolated families and networks, and their role in Banach space theory, are given in \cite{hansell:01}. Compare the next definition with the general metrization theorem.

\begin{definition}[{\cite{raja:03}*{Definition 1.1}}]\label{df:descriptive}
A compact space $K$ is called \emph{descriptive} if it possesses a $\sigma$-isolated network.
\end{definition}

Using another result of Gruenhage, namely that Gul'ko compact spaces are so-called \emph{hereditary weakly-$\theta$-refinable} \cite{gruenhage:87}*{Theorem 2}, Raja proves that all Gul'ko compact spaces are descriptive \cite{raja:03}*{Corollary 2.4}. We gather some more examples of descriptive compact spaces. We shall say that a topological space $X$ is \emph{$\sigma$-discrete} if we can write $X=\bigcup_{n \in \omega} X_n$, where each $X_n$ is discrete in its subspace topology. A Baire Category argument shows that every $\sigma$-discrete compact space must be scattered. All scattered compact spaces having countable Cantor-Bendixson rank are $\sigma$-discrete. On the other hand it is straightforward to construct examples of $\sigma$-discrete compact spaces having arbitrarily large rank. It is clear that every compact $\sigma$-discrete space is descriptive. This simple observation allows us to separate the classes of Gul'ko and descriptive compact spaces. Indeed, every compact space $K$ having Cantor-Bendixson rank $3$ is descriptive, however, there exist plenty of such spaces that are separable yet non-metrizable, and no space of this form can be Gul'ko (this follows because every Gul'ko compact space is Corson compact \cite{gulko:79}, and separable Corson compact spaces are easily seen to be metrizable). The standard examples of these, known in the literature as Alexandrov-Urysohn spaces or Mr\'owka-Isbell spaces, are constructed using uncountable almost disjoint families of subsets of $\omega$, and make up an extremely versatile class \cite{hernandez:hrusak:18}.

We note that an example of a Gruenhage, non-Gul'ko compact space was given in \cite{argyros:mercourakis:93}*{Theorem 3.3}; this space was shown to be descriptive in \cite{oncina:raja:04}*{Example 4.5} and later, using a different method, in \cite{smith:troyanski:09}*{Example 1}.

If we confine our attention to scattered spaces then we obtain an equivalence between descriptive and $\sigma$-discrete compact spaces. The roots of the argument in the proof below can be traced to \cite{raja:02}*{Theorem 2.4} and, as in \cite{smith:troyanski:10}, we attribute the result to Raja.
	
\begin{proposition}[Raja \cite{smith:troyanski:10}*{Corollary 4}]\label{df:descriptive_scattered}
 Let $K$ be a scattered descriptive compact space. Then $K$ is $\sigma$-discrete.
\end{proposition}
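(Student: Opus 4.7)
The plan is to select the discrete pieces by combining the $\sigma$-isolated network of $K$ with its Cantor--Bendixson rank. Write $\mathscr{N} = \bigcup_{n\in\omega} \mathscr{N}_n$ for the given network and, for each $x \in K$, let $\rho(x)$ be the unique ordinal with $x \in K^{(\rho(x))} \setminus K^{(\rho(x)+1)}$. Since $x$ is isolated in $K^{(\rho(x))}$, I can fix an open set $V_x \subseteq K$ with $V_x \cap K^{(\rho(x))} = \{x\}$; this is essentially the only ingredient that scatteredness contributes.

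The candidate discrete pieces are
\[
 D_n = \set{x \in K}{\text{there is } N \in \mathscr{N}_n \text{ with } x \in N \text{ and } \rho(y) \leq \rho(x) \text{ for all } y \in N},
\]
namely the points that attain the $\rho$-maximum on some block of $\mathscr{N}_n$ that contains them. I would then verify that each $D_n$ is discrete in $K$ and that $K = \bigcup_n D_n$, which yields the required $\sigma$-discrete decomposition.

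For discreteness, let $x \in D_n$ be witnessed by $N \in \mathscr{N}_n$. The isolated property of $\mathscr{N}_n$ provides an open $U \ni x$ meeting only $N$ among $\mathscr{N}_n$, so any $y \in U \cap V_x \cap D_n$ must lie in $N$; by pairwise disjointness of $\mathscr{N}_n$, the witness for $y$ is also $N$, and comparing the two maximality conditions forces $\rho(x) = \rho(y)$. Hence $y \in K^{(\rho(x))} \cap V_x = \{x\}$. For the covering, apply the network property to $V_x$ to obtain $N \in \mathscr{N}$ with $x \in N \subseteq V_x$; every $y \in N \setminus \{x\}$ lies in $V_x \setminus \{x\}$ and therefore outside $K^{(\rho(x))}$, so $\rho(y) < \rho(x)$. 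Thus $x$ itself is the witness that places it in $D_n$ for any $n$ with $N \in \mathscr{N}_n$.

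The main obstacle is guessing the right selector. Either layer of structure in isolation is insufficient: picking an arbitrary representative from each block gives discrete pieces but no reason they should cover $K$, while tracking only CB rank ignores the countable indexing altogether. A naive transfinite induction on the CB rank also fails, as the derivative $K'$ need not have strictly smaller rank in any useful sense (e.g.\ $\omega_1+1$ shows this). The maximum-rank-in-block condition is precisely what locks the two ``isolating'' inequalities, one for $\mathscr{N}_n$ and one for $K^{(\rho(x))}$, into the single intersection $U \cap V_x$, after which the verification is only a few lines.
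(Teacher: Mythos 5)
Your proof is correct and is essentially the paper's argument: both select, from each block of the $\sigma$-isolated network, the points that are extremal for an ordinal stratification coming from scatteredness, and both obtain the covering by applying the network property to an open set that isolates $x$ inside its stratum. The only difference is that the paper linearises the stratification into a transfinite enumeration with open initial segments, so each block contributes at most one point and discreteness follows from isolatedness alone, whereas your Cantor--Bendixson rank may select several points of equal rank from a single block, which you correctly handle by intersecting with $V_x$.
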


\begin{proof}
 Because $K$ is scattered, we can use transfinite recursion to write $K=\set{x_\xi}{\xi \in \lambda}$ for some ordinal $\lambda$, where the $x_\xi$ are distinct and each set $U_\alpha:=\set{x_\xi}{\xi \in \alpha+1}$, $\alpha \in \lambda$, is open in $K$. Let $\mathscr{H}=\bigcup_{n \in \omega} \mathscr{H}_n$ be a network for $K$, where each $\mathscr{H}_n$ is an isolated family of sets. Given non-empty $E \subseteq K$, let $\Lambda_E = \set{\alpha \in \lambda}{x_\alpha \in E \subseteq U_\alpha}$ and set $\alpha_E = \min \Lambda_E$ whenever $\Lambda_E \neq \varnothing$. Given $n \in \omega$, define
 \[
  D_n = \set{x_{\alpha_H}}{H \in \mathscr{H}_n \text{ and }\Lambda_H \neq \varnothing}.
 \]
 Because each $\mathscr{H}_n$ is isolated, each $D_n$ is discrete in its subspace topology. Given $\alpha \in \lambda$, as $\mathscr{H}$ is a network, there exists $n \in \omega$ and $H \in \mathscr{H}_n$ such that $x_\alpha \in H \subseteq U_\alpha$. Thus $\alpha \in \Lambda_H$, and as $x_\alpha \notin U_\xi$ for all $\xi \in \alpha$, we obtain $\alpha=\alpha_H$ and $x_\alpha \in D_n$. Therefore $K$ is $\sigma$-discrete.
\end{proof}

The particular application of transfinite recursion above is a simplified version of the argument in Ribarska's characterization of fragmentable spaces -- see \Cref{th:fragmentability} below.

The sets in an arbitrary network can be very wild. However, given quite general circumstances, it is possible (and desirable) to replace them with more well-behaved sets.

\begin{lemma}[cf.\ {\cite{raja:03}*{Lemma 2.1}}]\label{lm:manicure}
Let $X$ be a regular topological space and let $\mathscr{H} = \bigcup_{n \in \omega} \mathscr{H}_n$ be a network for $X$, where
each $\mathscr{H}_n$ is isolated. Given $n \in \omega$, set $A_n = \cl{\bigcup\mathscr{H}_n}$ and, for every $H \in \mathscr{H}_n$, define the open set
\[
U_H = X\setminus \cl{\bigcup(\mathscr{H}_n\setminus\{H\})}.
\]
Then the families given by
\[
\mathscr{G}_n = \set{A_n \cap U_H}{H \in \mathscr{H}}, \quad n\in\omega,
\]
are isolated, and $\mathscr{G}:=\bigcup_{n\in\omega}\mathscr{G}_n$ is a network for $X$.
\end{lemma}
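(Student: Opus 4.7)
The plan hinges on two elementary inclusions which I would establish up front: (a) $H \subseteq U_H$ for every $H \in \mathscr{H}_n$, and (b) $A_n \cap U_H \subseteq \cl{H}$. For (a), since $\mathscr{H}_n$ is isolated, every $x \in H$ has an open neighbourhood $V$ disjoint from $\bigcup(\mathscr{H}_n \setminus \{H\})$; because $V$ is open, it is then disjoint from the closure of this union, so $V \subseteq U_H$. For (b), I would use the two-set identity $\cl{E \cup F} = \cl{E} \cup \cl{F}$ to rewrite $A_n = \cl{H} \cup \cl{\bigcup(\mathscr{H}_n \setminus \{H\})}$; since $U_H$ is by definition the complement of the second summand, intersecting with $U_H$ kills that piece and leaves only $\cl{H}$.

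Given (a) and (b), the isolation of $\mathscr{G}_n$ is immediate: for any $y \in A_n \cap U_H$, the open set $U_H$ itself witnesses isolation, because for $H' \in \mathscr{H}_n$ distinct from $H$, inclusion (b) applied to $H'$ gives $A_n \cap U_{H'} \subseteq \cl{H'} \subseteq \cl{\bigcup(\mathscr{H}_n \setminus \{H\})}$, a set disjoint from $U_H$ by the very definition of $U_H$.

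For the network property of $\mathscr{G}$, given an open set $W$ with $x \in W$, I would first invoke regularity to obtain an open $W'$ with $x \in W' \subseteq \cl{W'} \subseteq W$, then use that $\mathscr{H}$ is a network to find $n \in \omega$ and $H \in \mathscr{H}_n$ with $x \in H \subseteq W'$. Inclusion (a), together with $H \subseteq A_n$, places $x$ inside $A_n \cap U_H$, and inclusion (b) gives $A_n \cap U_H \subseteq \cl{H} \subseteq \cl{W'} \subseteq W$, finishing the argument.

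The main step requiring an \emph{idea} rather than routine bookkeeping is the identification of (b); once one sees that $A_n \cap U_H$ is always trapped inside $\cl{H}$, both conclusions follow mechanically. Regularity plays a role only in the final paragraph, where it is needed precisely so that enlarging $H$ to its closure does not escape from $W$.
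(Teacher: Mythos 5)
Your proof is correct and follows essentially the same route as the paper's: the paper's argument is exactly the chain $H \subseteq A_n \cap U_H \subseteq \cl{H}$ (your inclusions (a) and (b)), with $U_H$ serving as the isolating neighbourhood and regularity used only to pass from $H \subseteq W'$ to $\cl{H} \subseteq W$. The only difference is that you spell out the justifications of (a) and (b) that the paper leaves as a one-line observation.
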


\begin{proof}
As $X$ is regular, $\set{\cl{H}}{H \in \mathscr{H}}$ is also a network for $X$. Given $H \in \mathscr{H}_n$, observe that $H \subseteq A_n \cap U_H \subseteq \cl{H}$. Thus $\mathscr{G}$ is again a network. Moreover, since $U_H \cap L = \varnothing$ whenever $L \in \mathscr{H}_n\setminus\{H\}$, the elements of each $\mathscr{G}_n$ must be pairwise disjoint. Therefore each $\mathscr{G}_n$ is isolated.
\end{proof}

Recall the definition of $\tau$-LUR norms and \cite{molto:99}*{Main Theorem} from \Cref{sect:prelim}. Another of the most striking connections between topology and renorming theory is provided by the next result.  

\begin{theorem}[cf.~\cite{raja:03}*{Theorem 1.3}]\label{th:raja} The following statements hold.
\begin{enumerate}[label={\upshape{(\roman*)}}]
\item\label{th:raja_1} The dual space $X^*$ admits an equivalent $w^*$-LUR dual norm if and only if $(B_{X^*},w^*)$ is descriptive.
\item\label{th:raja_2} A compact space $K$ is descriptive if and only if $C(K)^*$ admits an equivalent $w^*$-LUR dual norm.
\end{enumerate}
\end{theorem}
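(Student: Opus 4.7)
My plan threads both parts through an intermediate equivalence: a compact space $L$ is descriptive if and only if it admits a lower semicontinuous metric $d$ that \emph{fragments} $L$ (every non-empty closed $F \subseteq L$ contains a relatively open subset of arbitrarily small $d$-diameter). Starting from a $\sigma$-isolated network $\mathscr{H}=\bigcup_n\mathscr{H}_n$, one uses Lemma \ref{lm:manicure} to replace each $\mathscr{H}_n$ by a pairwise disjoint family $\mathscr{G}_n$ of ``closed-cap-open'' sets, then defines $d(x,y)=\sum_n 2^{-n}\chi_n(x,y)$, where $\chi_n(x,y)=1$ iff $x$ and $y$ lie in different members of $\mathscr{G}_n$ and both lie in $\bigcup\mathscr{G}_n$; conversely, one extracts, for each $k$, a maximal isolated family of relatively open sets of $d$-diameter below $1/k$ in the successive hulls of a fragmentation hierarchy (in the spirit of Ribarska's characterization of fragmentability), yielding a $\sigma$-isolated network.

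For (i) $(\Leftarrow)$, apply the equivalence to $K=(B_{X^*},w^*)$ to obtain a $w^*$-lsc fragmenting metric $d$. For each $n,k\in\omega$ let
\[
 C_{n,k}=\cl{\conv}^{w^*}\bigl\{f-g : f,g\in nB_{X^*},\ d(f,g)\leq 1/k\bigr\},
\]
and let $p_{n,k}$ be the Minkowski functional of the $w^*$-closed symmetric convex set $C_{n,k}$; each $p_{n,k}$ is a $w^*$-lsc seminorm on $X^*$. Choose $a_{n,k}>0$ so that $\tn{f}^2=\n{f}^2+\sum_{n,k}a_{n,k}\,p_{n,k}(f)^2$ defines an equivalent norm; by construction $\tndot$ is $w^*$-lsc and hence a dual norm by Lemma \ref{pr:lsc}. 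If $\tn{f}=\tn{f_i}=\lim_i\tn{(f+f_i)/2}=1$, Lemma \ref{lm:parallelogram} forces $p_{n,k}(f_i-f)\to 0$ for every $(n,k)$; the definition of $C_{n,k}$, combined with $d$-fragmentation, then compels $f_i\to f$ in $w^*$. For (i) $(\Rightarrow)$, a standard slice argument shows that $w^*$-LUR implies that $\tndot$ itself is a $w^*$-lsc fragmenting metric on $B_{X^*}$: for each $\tn{f}=1$ and $\ep>0$ there exist $x\in X$ and $\alpha \in \R$ such that $f$ belongs to the $w^*$-slice $\{g\in B_{\tndot}:g(x)>\alpha\}$ of $\tndot$-diameter below $\ep$, and such slices fragment $B_{\tndot}$. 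Apply the equivalence.

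Part (ii) then reduces to (i). The canonical evaluation $x\mapsto\delta_x$ embeds $K$ as a $w^*$-closed subset of $B_{C(K)^*}$, and $\sigma$-isolated networks restrict to closed subspaces, so $(\Leftarrow)$ in (ii) follows from $(\Rightarrow)$ in (i). For $(\Rightarrow)$ in (ii), I would lift a $w^*$-lsc fragmenting metric $d$ on $K$ to one $\hat d$ on $B_{C(K)^*}$ by $\hat d(\mu,\nu)=\sup\bigl\{\int_K h\,\md(\mu-\nu) : h\in\mathscr{F}\bigr\}$, where $\mathscr{F}$ is a suitably chosen countable family of bounded $d$-Lipschitz functions on $K$; then verify that $\hat d$ is $w^*$-lsc and fragments $B_{C(K)^*}$ (using that $d$-fragmentation of $K$ lets us trim small pieces of measures with controlled variation), and invoke (i) once more.

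The main obstacle is the $(\Leftarrow)$ direction of (i): while the parallelogram formalism of Lemma \ref{lm:parallelogram} neatly reduces verifying $w^*$-LUR to verifying it at the level of each seminorm $p_{n,k}$, showing that simultaneous flatness of all the $p_{n,k}(f_i-f)$ together with $d$-fragmentation actually forces $w^*$-convergence is the delicate heart of the argument; one must carefully exploit the convex-hull definition of $C_{n,k}$ to convert metric smallness along $d$ into $w^*$-proximity. The lifting in (ii) is technically less subtle but requires an attentive Riesz-type approximation to ensure the fragmentation property survives the passage from $K$ to the measure ball.
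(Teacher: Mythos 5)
The paper does not actually prove this theorem---it is quoted from Raja's work---so there is no internal proof to compare against; judged on its own terms, your proposal has a fatal structural flaw. The intermediate equivalence on which everything rests, namely that a compact space is descriptive if and only if it admits a lower semicontinuous fragmenting metric, is false. The discrete metric $d(x,y)=1$ for $x\neq y$ is lower semicontinuous on every compact Hausdorff space (the set $\set{(x,y)}{d(x,y)>\alpha}$ is either empty, the whole square, or the complement of the diagonal, hence always open), and it fragments every scattered compact space. Thus $\wone+1$ is fragmented by a lower semicontinuous metric, yet it is not descriptive: otherwise it would be Gruenhage by \Cref{co:descriptive_Gruenhage}, hence so would its subspace $\wone$ by \Cref{th:gruenhage_stability}~\ref{th:gruenhage_stability_1}, contradicting \Cref{pr:wone}. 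The space $\sigma\Q\cup\{\infty\}$ of \Cref{ex:sigma_Q} is another counterexample already in the paper. Descriptiveness sits strictly between Gul'ko and Gruenhage in the chain of \Cref{sect:prelim}, whereas lower semicontinuous fragmentability does not even imply property {\st}; Raja's actual metric characterization requires a network that is $\sigma$-isolated and whose $n$-th layer has small $d$-diameter, which is a genuinely stronger, ``countably layered'' datum than the well-ordered peeling that fragmentability provides via \Cref{th:fragmentability}. (Your candidate metric $d=\sum_n 2^{-n}\chi_n$ also fails the triangle inequality when the middle point lies outside $\bigcup\mathscr{G}_n$, but this is minor compared to the above.)

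Even granting a correct metric characterization, the renorming half does not close. From $\tn{f}=\tn{f_i}\to 1$ and $\tn{\frac{1}{2}(f+f_i)}\to 1$, the convexity argument behind \Cref{lm:parallelogram} and \eqref{eq:parallelogram} yields only $p_{n,k}(f_i)\to p_{n,k}(f)$, i.e.\ convergence of the seminorm \emph{values}; it does not yield $p_{n,k}(f_i-f)\to 0$, and for Minkowski functionals of large convex hulls the former says essentially nothing about $w^*$-proximity. This is precisely why the known proofs (Raja, following the Molt\'o--Orihuela--Troyanski scheme) build the seminorms from distances to the individual $w^*$-compact convex pieces extracted from the $\sigma$-isolated network, rather than from hulls of difference sets; note also that with the discrete metric your set $C_{n,k}$ degenerates to $\{0\}$ and $p_{n,k}$ is not finite-valued. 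The reduction of \ref{th:raja_2} to \ref{th:raja_1} via $x\mapsto\delta_x$ and restriction of networks to closed subspaces is sound in one direction, but the proposed lifting of the metric to $B_{C(K)^*}$ again only tracks fragmentability, so it inherits the same gap.
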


In particular, \Cref{th:raja} \ref{th:raja_1} shows that an ostensibly geometric property of dual Banach spaces, namely the existence of an equivalent $w^*$-LUR norm, is really a non-linear topological property. It is natural to wonder if it is possible to isolate another topological property which would yield analogues of \Cref{th:raja} \ref{th:raja_1} or \ref{th:raja_2} (or both), where ``$w^*$-LUR'' is replaced by ``strictly convex''. This is another question that motivated much of the work presented in the following sections.
	
\section{Topological properties of Gruenhage spaces}\label{sect:top_properties}
	
To explore the topological properties of Gruenhage spaces, we present two more user-friendly equivalent formulations of the families described in \Cref{df:gruenhage_defn}.
	
\begin{proposition}[{\cite{stegall:91}*{Proposition 7.4} and \cite{smith:09}*{Proposition 2}}]\label{pr:gruenhage_equiv}
Let $X$ be a topological space. The following statements are equivalent.
\begin{enumerate}[label={\upshape{(\roman*)}}]
\item\label{pr:gruenhage_equiv_1} $X$ is Gruenhage;
\item\label{pr:gruenhage_equiv_2} there exist closed sets $A_n \subseteq X$ and families $\mathscr{H}_n$ of pairwise disjoint sets open in $A_n$, $n \in \omega$, such that $\bigcup_{n \in \omega} \mathscr{H}_n$ separates points;
\item\label{pr:gruenhage_equiv_3} there exist open sets $R_n$ and families $\mathscr{U}_n$ of open subsets of $X$, $n \in \omega$, such that $R_n \subseteq U$ for all $U \in \mathscr{U}_n$, $U \cap V = R_n$ whenever $U,V \in \mathscr{U}_n$ are distinct, and $\bigcup_{n \in \omega} \mathscr{U}_n$ separates points.
\end{enumerate}
\end{proposition}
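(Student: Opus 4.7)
The plan is to establish the cycle $(i) \Rightarrow (ii) \Rightarrow (iii) \Rightarrow (i)$, with $(iii) \Rightarrow (i)$ essentially for free. For that last implication, I take the very $\mathscr{U}_n$ and $R_n$ of $(iii)$ as the witnessing decomposition in \Cref{df:gruenhage_defn}: if $x \in U$ and $y \notin U$ for some $U \in \mathscr{U}_n$, then $y \notin R_n$ (since $R_n \subseteq U$), and as any two distinct members of $\mathscr{U}_n$ meet only in $R_n$, the point $y$ lies in at most one element of $\mathscr{U}_n$, yielding $\ord{y}{\mathscr{U}_n} \leq 1$.

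For $(i) \Rightarrow (ii)$, starting from a separating family $\mathscr{U} = \bigcup_{n \in \omega} \mathscr{U}_n$ as in \Cref{df:gruenhage_defn}, I would set
\[
A_{n,k} = \{ x \in X : \ord{x}{\mathscr{U}_n} \leq k \}, \qquad n, k \in \omega,
\]
each of which is closed because its complement equals $\bigcup \{ \bigcap_{U \in G} U : G \subseteq \mathscr{U}_n,\, |G| = k+1 \}$, a union of open sets. Writing $R_G = \bigcap_{U \in G} U$ for finite $G \subseteq \mathscr{U}_n$, put
\[
\mathscr{H}_{n,k} = \{ R_G \cap A_{n,k} : G \subseteq \mathscr{U}_n,\, |G| = k \}.
\]
Each member is open in $A_{n,k}$, and $\mathscr{H}_{n,k}$ is pairwise disjoint because any point in $R_G \cap R_{G'} \cap A_{n,k}$ with $G \neq G'$ would lie in at least $k+1$ members of $\mathscr{U}_n$, contradicting the bound on $A_{n,k}$. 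For separation, I fix distinct $x, y$ and a witness $U \in \mathscr{U}_n$ with $x \in U$, $y \notin U$. If $\ord{x}{\mathscr{U}_n} = k < \infty$, then letting $G_x \ni U$ be the $k$-subfamily through $x$ gives $x \in R_{G_x} \cap A_{n,k}$, while $y \notin R_{G_x} \subseteq U$. If instead $\ord{y}{\mathscr{U}_n} = k < \infty$, then $y \in R_{G_y} \cap A_{n,k}$ and $x$ is excluded because either $\ord{x}{\mathscr{U}_n} > k$ (so $x \notin A_{n,k}$), or else $x \in R_{G_y}$ combined with $x \in U \notin G_y$ would put $x$ in $k+1$ members of $\mathscr{U}_n$, contradicting the $A_{n,k}$ bound. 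A bijection $\omega \to \omega \times \omega$ reindexes the pairs $(A_{n,k}, \mathscr{H}_{n,k})$ as required.

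For $(ii) \Rightarrow (iii)$, given the pairs $(A_n, \mathscr{H}_n)$, for each $H \in \mathscr{H}_n$ I choose an open $W_H \subseteq X$ with $W_H \cap A_n = H$, define $R_n = X \setminus A_n$, and let $\mathscr{V}_n = \{ W_H \cup R_n : H \in \mathscr{H}_n \}$. Every member contains $R_n$, and for distinct $H, H'$ the identity $(W_H \cup R_n) \cap (W_{H'} \cup R_n) = (W_H \cap W_{H'}) \cup R_n$ reduces to $R_n$ because $W_H \cap W_{H'} \cap A_n = H \cap H' = \varnothing$. To keep separation intact when the separating $H \ni x$ has $y \in R_n$, I interleave an auxiliary sequence $R_n' = \varnothing$, $\mathscr{V}_n' = \{ R_n \}$, whose singleton structure trivially fits the conditions in~(iii); the two cases $y \in A_n$ (handled by $W_H \cup R_n$, since $y \in W_H$ would force $y \in H$) and $y \in R_n$ (handled by $\{R_n\}$, since $x \in A_n$) then cover all possibilities.

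The main obstacle is the separation check in $(i) \Rightarrow (ii)$, specifically the more delicate subcase where the finite order belongs to $y$ rather than to the point $x$ inside the separator $U$; rescuing separation there relies on combining the closedness bound $\ord{\cdot}{\mathscr{U}_n} \leq k$ on $A_{n,k}$ with the observation that $U \notin G_y$. The other implications reduce to straightforward open-set manipulations once the closed sets $A_{n,k}$ and the cores $R_n$ are correctly identified.
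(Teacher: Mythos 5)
Your proposal is correct and follows essentially the same route as the paper: the key implication \ref{pr:gruenhage_equiv_1} $\Rightarrow$ \ref{pr:gruenhage_equiv_2} uses the same order-stratification $A_{n,k}=\set{x}{\ord{x}{\mathscr{U}_n}\leq k}$ together with $k$-fold intersections of members of $\mathscr{U}_n$, the only (harmless) variation being that you secure pairwise disjointness from the bound $\ord{\cdot}{\mathscr{U}_n}\leq k$ directly rather than by further removing $A_{n,k-1}$ as the paper does. The remaining implications, which the paper declares trivial, you write out correctly, including the auxiliary singleton families $\{R_n\}$ needed in \ref{pr:gruenhage_equiv_2} $\Rightarrow$ \ref{pr:gruenhage_equiv_3} and the $k=0$ level that handles a point lying in no member of $\mathscr{U}_n$.
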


We should point out that the property given in \Cref{pr:gruenhage_equiv} \ref{pr:gruenhage_equiv_2} first appears essentially in \cite{stegall:91}*{Lemma 7.1 (i)}. The implication \ref{pr:gruenhage_equiv_1} $\Rightarrow$ \ref{pr:gruenhage_equiv_2} is contained in the proofs of \cite{stegall:91}*{Lemma 7.1 and Proposition 7.4}, and part of the arguments therein can be traced back to \cite{gruenhage:87}. The implications \ref{pr:gruenhage_equiv_2} $\Rightarrow$ \ref{pr:gruenhage_equiv_3} and \ref{pr:gruenhage_equiv_3} $\Rightarrow$ \ref{pr:gruenhage_equiv_1} in \Cref{pr:gruenhage_equiv} are trivial.
	
	\begin{proof}[Proof of \Cref{pr:gruenhage_equiv} \ref{pr:gruenhage_equiv_1} $\Rightarrow$ \ref{pr:gruenhage_equiv_2}] 
	Let $\mathscr{U} = \bigcup_{n \in \omega} \mathscr{U}_n$ be a family fulfilling the conditions of \Cref{df:gruenhage_defn}. Given $m,n \in \omega$, define the closed set
	\[
	 A_{m,n} = \set{x \in X}{\ord{x}{\mathscr{U}_n} \leq m},
	\]
    and given $m \geq 1$, define the family of pairwise disjoint sets
    \[
     \mathscr{H}_{m,n} = \set{V \cap A_{m,n}\setminus A_{m-1,n}}{V = U_1 \cap \ldots \cap U_m \text{ for some distinct }U_1,\ldots,U_m \in \mathscr{U}_n}.
    \]
Evidently each element of $\mathscr{H}_{m,n}$ is open in $A_{m,n}$ and, given the properties of $\mathscr{U}$, it is easy to check that the sets $A_{m,n}$ and families $\mathscr{H}_{m,n}$ satisfy the properties asked for in \ref{pr:gruenhage_equiv_2}.
\end{proof}
	
	More ways to describe Gruenhage spaces are given in \cite{garcia:padial:15}*{Theorem 1.1}. Given \Cref{lm:manicure} and \Cref{pr:gruenhage_equiv} \ref{pr:gruenhage_equiv_2}, the next result is immediate.
	
	\begin{corollary}[{\cite{smith:09}*{Corollary 4}}]\label{co:descriptive_Gruenhage}
	 Descriptive compact spaces are Gruenhage.
	\end{corollary}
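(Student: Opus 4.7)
The plan is to chain together the two results flagged in the hint, namely \Cref{lm:manicure} and the equivalence \ref{pr:gruenhage_equiv_2} of \Cref{pr:gruenhage_equiv}. Let $K$ be a descriptive compact space, so by \Cref{df:descriptive} it carries a $\sigma$-isolated network $\mathscr{H} = \bigcup_{n \in \omega} \mathscr{H}_n$. Since compact Hausdorff spaces are regular, \Cref{lm:manicure} applies and produces a refined network $\mathscr{G} = \bigcup_{n \in \omega} \mathscr{G}_n$ whose members have the specific shape $A_n \cap U_H$, with $A_n = \cl{\bigcup \mathscr{H}_n}$ closed in $K$ and $U_H$ open in $K$, and such that the elements of each $\mathscr{G}_n$ are pairwise disjoint.

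Next I would observe the two facts needed to match the hypothesis of \Cref{pr:gruenhage_equiv} \ref{pr:gruenhage_equiv_2}. First, each set $A_n \cap U_H$ is open in $A_n$ (in the subspace topology), simply because $U_H$ is open in $K$. Second, the family $\mathscr{G}$ separates points: given distinct $x,y \in K$, choose disjoint open neighbourhoods $V_x \ni x$ and $V_y \ni y$ using Hausdorffness, and then use the network property of $\mathscr{G}$ to find $n \in \omega$ and $G \in \mathscr{G}_n$ with $x \in G \subseteq V_x$; then $y \notin G$, so $\{x,y\} \cap G = \{x\}$, which is the required $T_0$-separation.

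With the closed sets $A_n$ and the pairwise disjoint, relatively open, point-separating families $\mathscr{G}_n$ in hand, \Cref{pr:gruenhage_equiv} \ref{pr:gruenhage_equiv_2} $\Rightarrow$ \ref{pr:gruenhage_equiv_1} immediately yields that $K$ is Gruenhage. There is no genuine obstacle here; the entire content of the argument is already packaged in \Cref{lm:manicure}, whose role is precisely to convert an arbitrary $\sigma$-isolated network into one whose members are intersections of a closed set with an open set — exactly the configuration that \Cref{pr:gruenhage_equiv} \ref{pr:gruenhage_equiv_2} recognises as a witness of the Gruenhage property.
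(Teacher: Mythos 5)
Your proposal is correct and follows exactly the route the paper takes: the paper declares the corollary immediate from \Cref{lm:manicure} together with \Cref{pr:gruenhage_equiv} \ref{pr:gruenhage_equiv_2}, and you have simply filled in the (correct) details of why the manicured network supplies the closed sets $A_n$ and pairwise disjoint relatively open, point-separating families required there.
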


	We shall use trees to show that there exist Gruenhage compact spaces that are not descriptive. Recall that a \emph{tree} $(\Upsilon,\preccurlyeq)$ is a partially ordered set such that, for each $x \in \Upsilon$, the set $\set{u \in \Upsilon}{u \prec x}$ of strict predecessors is well-ordered (so here we mean tree in the sense of combinatorial set theory, rather than descriptive set theory). With respect to the standard interval topology $\Upsilon$ is a scattered locally compact space (to ensure that $\Upsilon$ is Hausdorff, we assume that every non-empty chain in $\Upsilon$ has at most one minimal upper bound). Recall that $\Upsilon$ is said to be \emph{$L$-embeddable}, where $(L,\leq)$ is a totally ordered set, if there exists a function $f:\Upsilon\to L$ such that $f(x) < f(y)$ whenever $x \prec y$. A tree $\Upsilon$ is called \emph{special} if it can be written as the union of countably many antichains or, equivalently, if it is $\Q$-embeddable.
	
	Define $\sigma\Q = \set{x \subseteq \Q}{x \text{ is well-ordered and bounded above}}$, and partially order $\sigma\Q$ by end-extension:~$x \preccurlyeq y$ if and only if $x$ is an initial segment of $y$. By a classical result of Kurepa, $\sigma\Q$ is not $\Q$-embeddable and is thus non-special. Define $K$ to be the scattered $1$-point compactification $\sigma\Q \cup \{\infty\}$. Then $K$ is non-descriptive. Indeed, if it were then it would be $\sigma$-discrete by \Cref{df:descriptive_scattered}. However, if the $1$-point compactification $K = \Upsilon \cup \{\infty\}$ of a tree $\Upsilon$ is $\sigma$-discrete, then $\Upsilon$ must be special (cf.~\cite{smith:06}*{Theorem 4}).
	
	\begin{example}\label{ex:sigma_Q}
	 The space $K=\sigma\Q \cup \{\infty\}$ is Gruenhage. Indeed, set $U_\infty = \sigma\Q$ and, given $q \in \Q$, define $U_q = \set{x \in \sigma\Q}{q \in x}$. Each such $U_q$ is open. In particular, given $q \in \Q$ and $x \in U_q$, set $u=(-\infty,q) \cap x \in \sigma\Q$ and observe that the basic open interval $\set{v \in \sigma\Q}{u \prec v \preccurlyeq x}$ is included in $U_q$. Given $q \in \Q \cup \{\infty\}$, set $R_q=\varnothing$ and $\mathscr{U}_q = \{U_q\}$. Then it is easy to see that the $\mathscr{U}_q$ separate points of $K$, with $U_\infty$ separating $\infty$ from every point of $\sigma\Q$.
	\end{example}

	A complete characterization of the trees whose $1$-point compactifications are Gruenhage, which provides many more examples of Gruenhage, non-descriptive compact spaces, is given in \cite{smith:09}*{Corollary 17}.
	
	One cannot fail to notice that, in \Cref{ex:sigma_Q}, we only needed countably many open sets to separate points of $K$, rather than countably many families of open sets. This turns out to be true whenever $X$ is a Gruenhage space and $\card{X} \leq \continuum$; in this case \Cref{pr:gruenhage_equiv} \ref{pr:gruenhage_equiv_3} can be reduced to something strikingly simple.
	
	\begin{proposition}[{\cite{smith:troyanski:10}*{Proposition 2}}]\label{pr:gruenhage_continuum} Let $X$ be a topological space with $\card{X} \leq \continuum$. Then $X$ is Gruenhage if and only if there is a countable family of open subsets of $X$ that separates points.
	\end{proposition}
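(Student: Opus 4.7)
The reverse implication is immediate: a countable open separating family $\{W_k\}_{k\in\omega}$ is $\sigma$-distributively point finite via the trivial decomposition $\mathscr{U}_k=\{W_k\}$ (so each $\ord{x}{\mathscr{U}_k}\leq 1$), and the injection $x\mapsto(\chi_{W_k}(x))_k$ of $X$ into $2^\omega$ forces $\card X\leq\continuum$.

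For the forward direction, my plan is to invoke \Cref{pr:gruenhage_equiv}~\ref{pr:gruenhage_equiv_3} to replace the Gruenhage family with open sets $R_n$ and ``sunflower'' families $\mathscr{U}_n$ satisfying $R_n\subseteq U$ for each $U\in\mathscr{U}_n$, $U\cap V=R_n$ for distinct $U,V\in\mathscr{U}_n$, and $\bigcup_n\mathscr{U}_n$ separating points. Pairwise disjointness of the non-empty pieces $U\setminus R_n$ forces $\card{\mathscr{U}_n}\leq\card X+1\leq\continuum$, so I enumerate $\mathscr{U}_n=\{U_{n,\alpha}:\alpha\in I_n\}$ and pick injections $b_n:I_n\to 2^\omega\setminus\{0^\omega\}$. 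For $n,k\in\omega$, I then define the open set
\[
W_{n,k}:=\bigcup\bigl\{U_{n,\alpha}:\alpha\in I_n,\ b_n(\alpha)(k)=1\bigr\},
\]
and propose the countable family $\{R_n:n\in\omega\}\cup\{W_{n,k}:n,k\in\omega\}$ as the desired separating collection.

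Verification will proceed by cases. For distinct $x,y\in X$, one fixes $n$ and $\beta\in I_n$ with $x\in U_{n,\beta}$ and $y\notin U_{n,\beta}$. If $x\in R_n$, then $R_n\subseteq U_{n,\beta}$ and $y\notin U_{n,\beta}$ force $y\notin R_n$, so $R_n$ already separates. If instead $x\in U_{n,\beta}\setminus R_n$, the sunflower relation forces $x\in U_{n,\alpha}\iff\alpha=\beta$ (otherwise $x\in U_{n,\alpha}\cap U_{n,\beta}=R_n$), giving $\chi_{W_{n,k}}(x)=b_n(\beta)(k)$. A parallel analysis of $y$ yields $y\notin R_n$ and either $y\in U_{n,\gamma}\setminus R_n$ for a unique $\gamma\neq\beta$ (so $\chi_{W_{n,k}}(y)=b_n(\gamma)(k)$) or $y\notin\bigcup\mathscr{U}_n$ (so $\chi_{W_{n,k}}(y)=0$). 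The injectivity of $b_n$ in the first sub-case and the condition $b_n(\beta)\neq 0^\omega$ in the second then produce a $k$ for which $W_{n,k}$ separates.

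The main obstacle is identifying the right construction. A naive attempt to thin $\mathscr{U}=\bigcup_n\mathscr{U}_n$ down to a countable subfamily must fail, as demonstrated on a discrete space of cardinality $\continuum$ with $\mathscr{U}_0=\{\{x\}:x\in X\}$, where no countable sub-collection of singletons separates. The key insight is to combine two ingredients: the sunflower structure bounds $\card{\mathscr{U}_n}$ by $\continuum=2^\omega$, so every index $\alpha$ admits a binary encoding $b_n(\alpha)$, and new open sets are formed as ``$k$-th bit'' unions of members of $\mathscr{U}_n$. Including the cores $R_n$ separately is necessary, because for $x\in R_n$ the encoding-based $W_{n,k}$'s can fail to separate $x$ from a point $y\in U_{n,\gamma}\setminus R_n$ (precisely when every $b_n(\alpha)$ has support contained in that of $b_n(\gamma)$), whereas $R_n$ itself always handles this case.
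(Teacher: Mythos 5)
Your proof is correct and follows essentially the same route as the paper's: both arguments bound $\card{\mathscr{U}_n}$ by $\continuum$ via the sunflower structure from \Cref{pr:gruenhage_equiv}~\ref{pr:gruenhage_equiv_3}, encode the members of each $\mathscr{U}_n$ injectively into $2^\omega$, and separate points using the countably many ``$k$-th bit'' unions. The only cosmetic difference is that the paper takes unions over both bit values $i\in\{0,1\}$ rather than just $i=1$, which makes your separate inclusion of the cores $R_n$ and the exclusion of the zero sequence unnecessary.
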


	\begin{proof}
	 One implication is trivial. Conversely, let sets $R_n$ and families $\mathscr{U}_n$, $n \in \omega$, satisfy the conditions of \Cref{pr:gruenhage_equiv} \ref{pr:gruenhage_equiv_3}. Since $\card{X} \leq \continuum$, it is clear that $\card\mathscr{U}_n \leq \continuum$ for each $n$. Fix injections $\map{\pi_n}{\mathscr{U}_n}{2^\omega}$ and define
	 \[
	 U_{n,k,i} = \bigcup\set{U \in \mathscr{U}_n}{\pi_n(U)(k) = i}, \quad n,k \in \omega,\, i \in 2.
	\]
	Then the $U_{n,k,i}$ separate points of $X$. Given distinct $x,y \in X$, let $n \in \omega$ such that $\{x,y\} \cap U$ is a singleton for some $U \in \mathscr{U}_n$. Without loss of generality assume $\{x,y\} \cap U = \{x\}$. If $y \notin \bigcup\mathscr{U}_n$ then $\{x,y\} \cap U_{n,0,\pi_n(0)} = \{x\}$. If $y \in \bigcup\mathscr{U}_n$ then $y \in V$ for some $V \in \mathscr{U}_n\setminus\{U\}$. Since $y \notin R_n$, we observe that $V$ is unique. As $U \neq V$, there exists $k \in \omega$ satisfying $i:=\pi_n(U)(k) \neq \pi_n(V)(k)$. Then $x \in U_{n,k,i}$ and the uniqueness of $V$ forces $y \notin U_{n,k,i}$.
	\end{proof}
	
	Of course, if a compact space $K$ admits a countable family $\mathscr{U}$ of open sets that can separate points in a $T_1$ sense (given $x,y \in K$, there exists $U\in\mathscr{U}$ such that $\{x,y\} \cap U=\{x\}$), then $K$ is metrizable. 
	
	\Cref{pr:gruenhage_continuum} also enables us to provide a straightforward proof of the fact that $\wone$ in its standard locally compact interval topology in not Gruenhage, something first observed in \cite{ribarska:88}*{Theorem 5} and later in \cite{stegall:91}*{p.~103}.
	
	\begin{proposition}[{\cite{ribarska:88}*{Theorem 5}}]\label{pr:wone}
	 The ordinal $\wone$ is not Gruenhage. 
	\end{proposition}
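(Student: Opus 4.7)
The plan is to reduce to the ``easy'' case by invoking \Cref{pr:gruenhage_continuum}. Since $\card{\wone} = \aleph_1 \leq \continuum$, if $\wone$ were Gruenhage then there would exist a countable family $\{U_n\}_{n \in \omega}$ of open subsets of $\wone$ that is separating. I will derive a contradiction from this purely using the standard fact that the intersection of countably many closed unbounded subsets of $\wone$ is again closed and unbounded.

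For each $n \in \omega$, let $F_n = \wone\setminus U_n$, which is closed in $\wone$. Partition $\omega$ into $A = \set{n \in \omega}{F_n \text{ is bounded in }\wone}$ and $B = \set{n \in \omega}{F_n \text{ is unbounded in }\wone}$. For each $n \in A$ choose $\alpha_n < \wone$ with $F_n \subseteq [0,\alpha_n)$, so that $[\alpha_n,\wone) \subseteq U_n$, and set $\beta = \sup_{n \in A}\alpha_n$, which is a countable ordinal (being a countable supremum of countable ordinals). For each $n \in B$, the set $F_n$ is closed and unbounded, that is, a club. Since $B$ is countable, the intersection $C = \bigcap_{n \in B} F_n$ is again a club (where we interpret the empty intersection as $\wone$ itself).

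The set $C \cap [\beta,\wone)$ is unbounded in $\wone$, so in particular contains at least two distinct elements $\gamma < \delta$. By construction, for every $n \in A$ we have $\gamma, \delta \geq \beta \geq \alpha_n$, hence $\gamma, \delta \in U_n$; and for every $n \in B$ we have $\gamma, \delta \in C \subseteq F_n$, hence $\gamma, \delta \notin U_n$. Therefore $\{\gamma,\delta\} \cap U_n$ is either empty or equal to $\{\gamma,\delta\}$ for every $n \in \omega$, contradicting the fact that $\{U_n\}_{n \in \omega}$ separates points.

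There is no real obstacle here; the only substantive input is that countable intersections of clubs in $\wone$ are clubs, together with \Cref{pr:gruenhage_continuum} which does the heavy lifting of converting ``Gruenhage'' into the much more tractable statement that a single countable family of open sets is separating. If one preferred a self-contained proof avoiding \Cref{pr:gruenhage_continuum}, one could instead start directly from a $\sigma$-distributively point finite separating family $\mathscr{U} = \bigcup_n \mathscr{U}_n$ and, for each $n$, run the same club argument on $F_n^U := \wone\setminus U$ for $U \in \mathscr{U}_n$ together with a pressing-down argument to control $\mathscr{U}_n$ on a club; but invoking \Cref{pr:gruenhage_continuum} is cleaner.
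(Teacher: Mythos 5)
Your proof is correct and follows essentially the same route as the paper: both reduce via \Cref{pr:gruenhage_continuum} to a single countable separating family, split the indices according to whether $\wone\setminus U_n$ is small (bounded/countable) or a club, and then find two points in the intersection of the relevant clubs beyond a bound for the small complements. You merely spell out the club-intersection step that the paper's proof leaves as ``easy to see.''
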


	\begin{proof}
	 Let $U_n$, $n \in \omega$, be open subsets of $\wone$. Let $I=\set{n \in \omega}{\wone\setminus U_n \text{ is countable}}$ and let $\alpha < \wone$ be an upper bound of $\bigcup_{n \in I} \wone\setminus U_n$. Then it is easy to see that $(\alpha,\wone)\setminus \bigcup_{n \in \omega\setminus I} U_n$ is a closed unbounded subset of $\wone$ whose points cannot be separated by the $U_n$. By \Cref{pr:gruenhage_continuum}, $\wone$ is not Gruenhage.
	\end{proof}

	One could argue that a class of topological spaces is worthy of study if it is both structured enough to permit meaningful analysis, yet also stable under common standard topological operations. We gather the first crop of topological stability properties of Gruenhage spaces in the next result.
	
	\begin{theorem}[{\cite{smith:09}*{Theorem 23 and Proposition 25}}]\label{th:gruenhage_stability} The following statements hold.
	\begin{enumerate}[label={\upshape{(\roman*)}}]
	 \item\label{th:gruenhage_stability_1} Subspaces of Gruenhage spaces are Gruenhage;
	 \item\label{th:gruenhage_stability_2} products of countably many Gruenhage spaces are Gruenhage;
	 \item\label{th:gruenhage_stability_3} perfect images of Gruenhage spaces are Gruenhage.
	\end{enumerate}
	\end{theorem}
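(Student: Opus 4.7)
\emph{Plan.} The first two parts should be routine bookkeeping. For \ref{th:gruenhage_stability_1}, given a Gruenhage witness $\mathscr{U}=\bigcup_{n\in\omega}\mathscr{U}_n$ for $X$ and a subspace $Y\subseteq X$, I would simply trace $\mathscr{U}$ onto $Y$ by setting $\mathscr{V}_n=\set{U\cap Y}{U\in\mathscr{U}_n}$. Both conditions of \Cref{df:gruenhage_defn} applied to distinct $x,y\in Y$ transfer verbatim, since $\{x,y\}\cap(U\cap Y)=\{x,y\}\cap U$ and $\ord{x}{\mathscr{V}_n}\leq\ord{x}{\mathscr{U}_n}$. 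For \ref{th:gruenhage_stability_2}, given Gruenhage witnesses $\mathscr{U}^k=\bigcup_n\mathscr{U}^k_n$ for each factor $X_k$, I would lift via the coordinate projections $\pi_k$ and index by the countable set $\omega\times\omega$, setting $\mathscr{V}_{k,n}=\set{\pi_k^{-1}(U)}{U\in\mathscr{U}^k_n}$. Distinct points of the product differ in some coordinate $k$, and the Gruenhage property of $X_k$ applied at $(x_k,y_k)$ delivers the required $n$ and $U\in\mathscr{U}^k_n$; one checks that $\ord{x}{\mathscr{V}_{k,n}}=\ord{x_k}{\mathscr{U}^k_n}$, so point-finiteness is preserved exactly.

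Part \ref{th:gruenhage_stability_3} is where the substantive work lies. Let $\map{f}{X}{Y}$ be a perfect surjection. The natural candidate family on $Y$ is built from the upper inverse $f^\#(U)=Y\setminus f(X\setminus U)$, which is open because $f$ is closed; given a Gruenhage witness $\mathscr{U}=\bigcup_n\mathscr{U}_n$ for $X$, I would set $\mathscr{V}_n=\set{f^\#(U)}{U\in\mathscr{U}_n}$. Point-finiteness transfers easily, since $y\in f^\#(U)$ forces $x\in U$ for every $x\in f^{-1}(y)$, whence $\ord{y}{\mathscr{V}_n}\leq\ord{x}{\mathscr{U}_n}$ for any such $x$. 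The main obstacle is separation: to separate $y_1\neq y_2$ by a single $f^\#(U)$ one needs the open set $U$ to contain one compact fibre $f^{-1}(y_i)$ entirely while missing the other, which is strictly stronger than separating two individual points of $X$.

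To overcome this I would switch to characterization \ref{pr:gruenhage_equiv_2} of \Cref{pr:gruenhage_equiv}, taking closed sets $A_n\subseteq X$ and pairwise disjoint families $\mathscr{H}_n$ of sets open in $A_n$ whose union separates points. Setting $B_n=f(A_n)$ (closed in $Y$ by perfectness) and, for each $H\in\mathscr{H}_n$, $K_H=B_n\setminus f(A_n\setminus H)$, one obtains an open subset of $B_n$ consisting precisely of those $y\in B_n$ with $\varnothing\neq f^{-1}(y)\cap A_n\subseteq H$; these remain pairwise disjoint within each level, because $y\in K_H\cap K_{H'}$ would force $\varnothing\neq f^{-1}(y)\cap A_n\subseteq H\cap H'=\varnothing$. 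The remaining and hardest step is to verify that $\bigcup_n\set{K_H}{H\in\mathscr{H}_n}$ separates points of $Y$: this reduces to finding, for each pair of disjoint compact fibres $f^{-1}(y_1),f^{-1}(y_2)$, a single $H$ with $\varnothing\neq f^{-1}(y_1)\cap A_n\subseteq H$ while either $f^{-1}(y_2)\cap A_n=\varnothing$ or $f^{-1}(y_2)\cap A_n\not\subseteq H$. I expect this to require refining the decomposition $(A_n,\mathscr{H}_n)$ so that every compact subset of $X$ has its trace on $A_n$ absorbed into a single member of $\mathscr{H}_n$ for appropriate $n$, exploiting the compactness of fibres together with the pairwise disjointness of each $\mathscr{H}_n$ to collapse finite multiplicities. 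This compactness-driven refinement is the delicate technical heart of the argument.
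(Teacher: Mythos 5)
Parts \ref{th:gruenhage_stability_1} and \ref{th:gruenhage_stability_2} of your argument are correct and are essentially the routine verifications the paper omits. The gap is in part \ref{th:gruenhage_stability_3}, where you have correctly diagnosed the obstruction (one must separate whole fibres, not individual points) but have not overcome it: the ``compactness-driven refinement'' you defer to is the entire content of the proof, and the specific form you ask for is problematic. You want to arrange that, for a suitable $n$, the trace $f^{-1}(y_1)\cap A_n$ of a compact fibre is \emph{contained in a single member} of the pairwise disjoint family $\mathscr{H}_n$. But a compact set can meet infinitely many members of a pairwise disjoint relatively open family (take $A_n=\{0\}\cup\set{1/k}{k\geq 1}$ and $\mathscr{H}_n=\set{\{1/k\}}{k\geq 1}$), so there are no ``finite multiplicities'' to collapse in general; and even when the trace meets only finitely many members, merging them into one destroys the pairwise disjointness on which your point-finiteness argument for $\set{K_H}{H\in\mathscr{H}_n}$ rests. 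So the separation step genuinely fails as stated, and no construction is offered to repair it.

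The paper's proof resolves this differently, working from \Cref{pr:gruenhage_equiv} \ref{pr:gruenhage_equiv_3} rather than \ref{pr:gruenhage_equiv_2}. Given distinct $y,y'$, it sets $K=f^{-1}(y)\cup f^{-1}(y')$ and chooses a maximal $M\subseteq\omega$ with $L:=K\setminus\bigcup_{n\in M}R_n\neq\varnothing$ (maximality coming from compactness of the relevant subset of $\pow{\omega}$); maximality forces $L\subseteq R_n$ for $n\notin M$, from which one deduces that for every $n$ relevant to separating points of $L$ the traces $\set{L\cap U}{U\in\mathscr{U}_n}$ \emph{partition} $L$, upgrading $T_0$- to $T_1$-separation on $L$. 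Compactness of $A=L\cap f^{-1}(y)$ then yields a finite $G\subseteq\omega$ and a subfamily $\mathscr{F}$ of the finite intersections $\mathscr{V}_G$, chosen of \emph{minimal} cardinality $k$, covering $A$ and missing a fixed point of the other fibre; the remainder of $f^{-1}(y)$ is absorbed by finitely many $R_n$, $n\in F\subseteq M$. The resulting open set $Y\setminus f(X\setminus(\bigcup_{n\in F}R_n\cup\bigcup\mathscr{F}))$ separates $y$ from $y'$, and the minimality of $k$ together with the partition property makes it the \emph{unique} member of the family $\mathscr{W}_{F,G,k}$ containing $y$ --- this is how point-finiteness is recovered without any pairwise disjointness. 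None of this machinery (the maximal $M$, the passage to finite intersections across several levels, the minimal-cardinality indexing) appears in your sketch, so part \ref{th:gruenhage_stability_3} remains unproved.
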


	The only statement requiring a non-trivial proof is \ref{th:gruenhage_stability_3}; below we provide a slightly more streamlined argument than the one given in \cite{smith:09}. 
	
	\begin{proof}[Proof of \Cref{th:gruenhage_stability} \ref{th:gruenhage_stability_3}]
	 Let $X$ be Gruenhage and take sets $R_n$ and families $\mathscr{U}_n$, $n \in \omega$, as in \Cref{pr:gruenhage_equiv} \ref{pr:gruenhage_equiv_3}. By including the singleton families $\{\bigcup\mathscr{U}_n\}$, $n \in \omega$, if necessary, we can assume that, given $n \in \omega$, there exists $m \in \omega$ satisfying $R_m = \bigcup\mathscr{U}_n$.
	 
	 Given non-empty finite $G \subseteq \omega$, define the family of open sets
	 \[
	  \mathscr{V}_G = \set{\bigcap_{n \in G} U_n}{U_n \in \mathscr{U}_n \text{ for all } n \in G}.
	 \]
	Let $f:X \to Y$ be perfect and surjective. Given finite $F,G \subseteq \omega$, $G \neq \varnothing$, and $k \in \omega$, define the family
	\[
	\mathscr{W}_{F,G,k} = \set{Y\setminus f\left(X\setminus\left(\bigcup_{n \in F} R_n \cup \bigcup\mathscr{F} \right) \right)}{\mathscr{F} \subseteq \mathscr{V}_G \text{ and }\card \mathscr{F} = k}
	\]
	Because $f$ is perfect, each element of $\mathscr{W}_{F,G,k}$ is open in $Y$. We claim that the $\mathscr{W}_{F,G,k}$ satisfy \Cref{df:gruenhage_defn}.
	 
	To this end, let $y,y' \in Y$ be distinct and define $K=f^{-1}(y) \cup f^{-1}(y')$, which is compact as $f$ is proper. Let
	 \[
	  \mathscr{A} = \set{I \subseteq \omega}{K\setminus\bigcup_{n \in I} R_n \neq \varnothing}.
	 \]
	Observe that $\mathscr{A}$ is compact in the pointwise topology of $\pow{\omega}$, so $\mathscr{A}$ admits an element $M$ that is maximal with respect to inclusion. Fix $L=K\setminus \bigcup_{n \in M} R_n \neq \varnothing$; by the maximality of $M$, $L \subseteq R_n$ whenever $n \in \omega\setminus M$. 
	
	Set $A=L \cap f^{-1}(y)$ and $B=L \cap f^{-1}(y')$. First, we should dispense with the case that $A$ or $B$ is empty. If $A=\varnothing$ then, by compactness, there exists finite $F \subseteq M$ such that $f^{-1}(y) \subseteq \bigcup_{n \in F} R_n$ (and $f^{-1}(y') \not\subseteq \bigcup_{n \in F} R_n$). Given arbitrary non-empty finite $G \subseteq \omega$, we observe that the only element of $\mathscr{W}_{F,G,0}$ is
	\[
	W:=Y\setminus f\left(X\setminus \bigcup_{n \in F} R_n\right),
	\]
	and $\{y,y'\} \cap W = \{y\}$. We reach a similar conclusion if $B = \varnothing$ instead.
	
	Hereafter we assume $A,B \neq \varnothing$. Given distinct $x,x' \in L$, there exist $n\in\omega$ and $U_0 \in \mathscr{U}_n$ such that $\{x,x'\} \cap U_0$ is a singleton. Notice that $n \in M$, else $x,x' \in L \subseteq R_n \subseteq U_0$, which isn't the case. Also observe that $L \subseteq \bigcup\mathscr{U}_n$; indeed, there exists $m \in \omega$ such that $R_m = \bigcup\mathscr{U}_n$, and we must have $m \in \omega\setminus M$ lest $\{x,x'\} \cap U_0 \subseteq \{x,x'\} \cap R_m = \varnothing$. Therefore $\set{L \cap U}{U \in \mathscr{U}_n}$ partitions $L$, meaning that $\{x,x'\} \cap U=\{x\}$ for some unique $U \in \mathscr{U}_n$ (so we obtain $T_1$-separation of distinct points in $L$).
	
	Fix $x' \in B$. By compactness of $A$ and the observations above, there exists a non-empty finite set $G \subseteq \omega$ such that $\set{L \cap U}{U \in \mathscr{U}_n}$ partitions $L$ whenever $n \in G$ and, given $x \in A$, there exist $n \in G$ and (unique) $U \in \mathscr{U}_n$ satisfying $\{x,x'\} \cap U = \{x\}$. Then note that $\set{L \cap V}{V \in \mathscr{V}_G}$ also partitions $L$ and, given $x \in A$, there exists (unique) $V \in \mathscr{V}_G$ satisfying $\{x,x'\} \cap V = \{x\}$. Therefore, by compactness of $A$, there exists a finite subset $\mathscr{F} \subseteq \mathscr{V}_G$, which we choose to have minimal cardinality $k$, satisfying $A \subseteq \bigcup\mathscr{F}$ and $x' \notin \bigcup\mathscr{F}$. Similarly to above, by compactness there exists finite $F \subseteq M$ satisfying
	\[
	 f^{-1}(y) \subseteq \bigcup_{n \in F} R_n \cup \bigcup\mathscr{F} \quad\text{and}\quad x' \notin \bigcup_{n \in F} R_n \cup \bigcup\mathscr{F}.
	\]
	Then
	\[
	 W:=Y\setminus f\left(X\setminus \bigcup_{n \in F} R_n \cup \bigcup\mathscr{F}\right) \in \mathscr{W}_{F,G,k}
	\]
    and $\{y,y'\} \cap W = \{y\}$. Moreover, $W$ is the only element of $\mathscr{W}_{F,G,k}$ that contains $y$. Indeed, let $\mathscr{G} \subseteq \mathscr{V}_G$ have cardinality $k$ and satisfy
    \[
     y \in f\left(X\setminus \bigcup_{n \in F} R_n \cup \bigcup\mathscr{G}\right).
    \]
    Then $A \subseteq \bigcup\mathscr{G}$, and as $\set{L \cap V}{V \in \mathscr{V}_G}$ partitions $L$ we must have $\mathscr{F} \subseteq \mathscr{G}$ by minimality of $k$; whence $\mathscr{G}=\mathscr{F}$ as required.
	\end{proof}

We will present more topological properties of Gruenhage spaces in \Cref{sect:descendents}. We defer the results until then because they apply to wider classes of spaces. Stability properties of Gruenhage compact spaces of a functional-analytic nature will be given in Section \ref{sect:funct_anal}.
	
We finish this section by mentioning some more examples of Gruenhage compact spaces. First, we recall the space $\mathcal{K}$, known to Banach space theorists as Kunen's compact $S$-space \cite{negrepontis:84}*{Section 7}. This is a subset of $[0,1]$ endowed with a complicated topology that requires CH in its construction. This space, and variants thereof, have been useful for producing counterexamples in Banach space theory \cite{fabian:11}*{Chapter 14}.  While it isn't explicitly stated in \cite{negrepontis:84}*{Section 7}, the topology of $\mathcal{K}$ can be chosen to be finer than the topology it inherits from $[0,1]$. Since, trivially, any space with a finer topology than a Gruenhage space is again Gruenhage, we obtain the next example.

\begin{example}[\cite{orihuela:smith:troyanski:12}*{Proposition 4.7}]
 The Kunen space $\mathcal{K}$ (with a finer topology as described above) is Gruenhage.
\end{example}

There is another set of examples of compact spaces constructed using a combinatorial argument (this time assuming the axiom $\mathfrak{b}=\aleph_1$) that are again trivially Gruenhage because the topology refines that of $\R$.

\begin{example}[\cite{orihuela:smith:troyanski:12}*{Proposition 4.8}]
The spaces in \cite{todorcevic:89}*{Theorem 2.5} are Gruenhage.
\end{example}
		
\section{Functional-analytic properties of Gruenhage compact spaces}\label{sect:funct_anal}

In this section we shall confine our attention to Gruenhage compact spaces $K$, for we will be interested in exploring the properties of the associated dual Banach space $C(K)^*$ and, more generally, dual Banach spaces $X^*$ that are generated by a $w^*$-Gruenhage compact subset.

In order to prove the results in the section we need to do some preparatory work, which culminates in Lemma \ref{lm:families}. A similar process is presented in \cite{smith:09}*{pp.~747--748}, but we take a different approach here. Let $R_n \subseteq K$ and families $\mathscr{U}_n$, $n \in \omega$, satisfy \Cref{pr:gruenhage_equiv} \ref{pr:gruenhage_equiv_3}. Define the new families
\[
 \mathscr{P}_n = \textstyle \{R_n,K\setminus\bigcup\mathscr{U}_n\} \cup \set{U\setminus R_n}{U \in \mathscr{U}_n}, \quad n\in\omega.
\]
By \Cref{pr:gruenhage_equiv} \ref{pr:gruenhage_equiv_3}, the elements of each $\mathscr{P}_n$ are pairwise disjoint and have union $K$. The same can be said of the families $\mathscr{D}_n$, where
\[
 \mathscr{D}_n = \set{P_1 \cap \ldots \cap P_n}{P_i \in \mathscr{P}_i,\,1 \leq i \leq n}, \quad n \in \omega.
\]
If $k \leq n$ then $\mathscr{D}_n$ refines $\mathscr{D}_k$. The next lemma follows by a simple appeal to the inner regularity of Radon measures (cf.~\cite{bogachev:07}*{Proposition 7.2.2 (i)}); its proof is given for completeness.

\begin{lemma}[{cf.~\cite{smith:09}*{Lemma 5}}]\label{lm:measure_lemma}
 Let $\mu \in C(K)^*$ be positive and let $n \in \omega$. Then $\pn{\mu}{1}=\sum_{D \in \mathscr{D}_n} \mu(D)$.
\end{lemma}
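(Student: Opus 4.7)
The plan is to proceed by induction on $n$, with the bulk of the work concentrated in a strengthened version of the single-partition case. Specifically, the auxiliary statement I would establish first is: \emph{for every positive Radon measure $\mu$ on $K$, every $i\in\omega$, and every Borel $B\subseteq K$,}
\[
\mu(B)=\mu(B\cap R_i)+\mu\bigl(B\cap (K\setminus\bigcup\mathscr{U}_i)\bigr)+\sum_{U\in\mathscr{U}_i}\mu(B\cap(U\setminus R_i)).
\]
Once this is in hand, the lemma follows easily: the case $n=1$ is the identity with $B=K$ and $i=1$ (since $\mathscr{D}_1=\mathscr{P}_1$); and for the step $n\mapsto n+1$, applying the identity with $i=n+1$ to each $D\in\mathscr{D}_n$ and invoking Tonelli to rearrange the resulting nonnegative double sum, indexed by the pairs $(D,P)\in\mathscr{D}_n\times\mathscr{P}_{n+1}$ which parametrise $\mathscr{D}_{n+1}$, yields the desired formula.

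For the auxiliary statement, note that pairwise disjointness of $\mathscr{P}_i$ together with $\mu(B)<\infty$ forces the index set $\mathscr{U}'_i:=\{U\in\mathscr{U}_i:\mu(B\cap(U\setminus R_i))>0\}$ to be countable. Writing $E:=B\cap\bigcup_{U\in\mathscr{U}_i\setminus\mathscr{U}'_i}(U\setminus R_i)$, countable additivity of $\mu$ applied to the countable partition of $B\setminus E$ consisting of $B\cap R_i$, $B\cap(K\setminus\bigcup\mathscr{U}_i)$, and the sets $B\cap(U\setminus R_i)$ for $U\in\mathscr{U}'_i$ then reduces the statement to showing $\mu(E)=0$.

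The main obstacle is that $\mathscr{U}_i\setminus\mathscr{U}'_i$ may be uncountable, so countable additivity alone is insufficient. Here I would invoke the inner regularity of $\mu$: given $\varepsilon>0$, pick a compact $C\subseteq E$ with $\mu(C)\geq\mu(E)-\varepsilon$. Since $E\cap R_i=\varnothing$, we have $C\cap R_i=\varnothing$, while $\{U:U\in\mathscr{U}_i\setminus\mathscr{U}'_i\}$ is an open cover of $C$, so compactness yields a finite subcover $U_1,\dots,U_k$. Then $C\subseteq\bigcup_{j=1}^{k}(U_j\setminus R_i)$, and hence
\[
\mu(C)\leq\sum_{j=1}^{k}\mu\bigl(C\cap(U_j\setminus R_i)\bigr)\leq\sum_{j=1}^{k}\mu\bigl(B\cap(U_j\setminus R_i)\bigr)=0
\]
by the definition of $\mathscr{U}'_i$. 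Letting $\varepsilon\to 0$ gives $\mu(E)=0$, completing the argument.
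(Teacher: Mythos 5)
Your argument is correct and follows essentially the same route as the paper: both reduce the lemma, via induction on $n$, to the single-level identity $\mu(B)=\sum_{P\in\mathscr{P}_i}\mu(B\cap P)$, and both establish the nontrivial part of that identity using inner regularity of the Radon measure together with a finite subcover extracted from the open family $\mathscr{U}_i$ and the pairwise disjointness of the sets $U\setminus R_i$. Your preliminary split into the countably many sets of positive measure versus the residual null part is a harmless repackaging of the paper's direct $\varepsilon$-approximation of $F=B\cap(\bigcup\mathscr{U}_i)\setminus R_i$ by a compact subset.
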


\begin{proof}
The statement will follow by induction on $n \in \omega$ provided can show that, given positive $\mu\in C(K)^*$ and a Borel set $E \subseteq K$, we have $\mu(E)=\sum_{P \in \mathscr{P}_n} \mu(E \cap P)$. For this, it is sufficient to verify that
 \[
  \mu(F) = \sum_{U \in \mathscr{U}_n} \mu(E \cap U\setminus R_n),
 \]
 where $F:=E\cap(\bigcup\mathscr{U}_n)\setminus R_n $. Let $\ep>0$. By inner regularity, let $M \subset F$ be compact and satisfy $\mu(F\setminus M) < \ep$. By compactness, there exists a finite set $\mathscr{F} \subseteq \mathscr{U}_n$ such that $M \subset \bigcup\set{E \cap U\setminus R_n}{U \in \mathscr{F}}$, whence
 \[
  \mu(F) - \sum_{U \in \mathscr{F}} \mu(E \cap U\setminus R_n) \leq \mu(F\setminus M) < \ep. \qedhere
 \]
\end{proof}

Since our families $\mathscr{D}_n$ are refining and partition $K$, the family $\mathscr{D}:=\bigcup_{n \in \omega} \mathscr{D}_n$ is a $\pi$-system:~$D \cap D' \in \mathscr{D}$ whenever $D,D' \in \mathscr{D}$. Elements of the proof of the next result appear in that of \cite{stegall:91}*{Lemma 7.6}.

\begin{lemma}[{cf.~\cite{smith:09}*{Lemma 6}}]\label{lm:norming}
 Let $\mu \in C(K)^*$ and $\ep>0$. Then there exist a finite set $\mathscr{F} \subseteq \mathscr{D}$ of pairwise disjoint non-empty sets, and signs $s_D \in \{\pm 1\}$, $D \in \mathscr{F}$, such that
 \[
 \pn{\mu}{1} - \sum_{D \in \mathscr{F}}s_D\mu(D) < \ep.
 \]
 In particular, if $\mu \in C(K)^*$ satisfies $\mu(D)=0$ for all $D \in \mathscr{D}$, then $\mu=0$.
\end{lemma}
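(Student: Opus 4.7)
The plan is to exploit the Hahn--Jordan decomposition $\mu = \mu^+ - \mu^-$, where $\mu^\pm$ are mutually singular positive Radon measures supported on disjoint Borel Hahn sets $P, N$, so $\pn{\mu}{1} = \mu^+(K) + \mu^-(K) = |\mu|(K)$. Applying \Cref{lm:measure_lemma} separately to $\mu^+$ and $\mu^-$ yields, for every $n$, $\sum_{D \in \mathscr{D}_n} |\mu|(D) = \pn{\mu}{1}$. Choosing the optimal signs $s_D := \sgn \mu(D)$ (with $\sgn 0 := 1$), one has $s_D \mu(D) = |\mu(D)| = |\mu|(D) - 2\min(\mu^+(D), \mu^-(D))$, so for any finite $\mathscr{F} \subseteq \mathscr{D}_n$,
\[
\sum_{D \in \mathscr{F}} s_D \mu(D) \;=\; |\mu|\Bigl(\textstyle\bigcup \mathscr{F}\Bigr) - 2 \sum_{D \in \mathscr{F}} \min(\mu^+(D), \mu^-(D)).
\]
Since the tail $|\mu|(K \setminus \bigcup \mathscr{F})$ can be made smaller than $\ep/4$ by taking $\mathscr{F}$ large, the task reduces to bounding the \emph{mixed-mass sum} $\sum_{D \in \mathscr{F}} \min(\mu^+(D), \mu^-(D))$ by $\ep/4$.

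To control the mixed-mass sum, I use inner regularity of $\mu^\pm$ to fix disjoint compact sets $C^+ \subseteq P$ and $C^- \subseteq N$ with $\mu^+(K \setminus C^+), \mu^-(K \setminus C^-) < \ep/16$. If a cell $D \in \mathscr{D}_n$ meets at most one of $C^+, C^-$, then $\min(\mu^+(D), \mu^-(D)) \leq \mu^\mp(D \setminus C^\mp)$ for the appropriate sign, so the ``non-mixed'' cells contribute at most $\ep/8$ in total. Hence it suffices to find $n$ for which the union of \emph{mixed} cells
\[
B_n := \bigcup \{D \in \mathscr{D}_n : D \cap C^+ \neq \varnothing \text{ and } D \cap C^- \neq \varnothing\}
\]
satisfies $|\mu|(B_n) < \ep/8$. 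Since $\mathscr{D}_{n+1}$ refines $\mathscr{D}_n$, $(B_n)$ is decreasing, so by continuity of the finite measure $|\mu|$ the task further reduces to proving $|\mu|\bigl(\bigcap_n B_n\bigr) = 0$.

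I expect this last step to be the main obstacle. The pointwise core is that $\bigcap_n D^x_n = \{x\}$ for every $x \in K$, where $D^x_n \in \mathscr{D}_n$ denotes the unique cell containing $x$: this follows by a direct case analysis on the members $P^x_i \in \mathscr{P}_i$ using the structural constraints $R_n \subseteq U$ and $U \cap V = R_n$ from \Cref{pr:gruenhage_equiv} \ref{pr:gruenhage_equiv_3} together with the $T_0$-separation of $\bigcup_n \mathscr{U}_n$. To upgrade this pointwise statement to a measure-theoretic one, I inspect a hypothetical $x \in \bigcap_n B_n$: by compactness there are subsequences $y_{n_k} \in D^x_{n_k} \cap C^+$ and $z_{n_k} \in D^x_{n_k} \cap C^-$ converging to points $y \in C^+, z \in C^-$, and repeating the cell-by-cell analysis shows $y$ (respectively $z$) must lie on the topological boundary $\partial U$ of some $U \in \bigcup_n \mathscr{U}_n$ through $x$. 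One then refines $C^\pm$ using outer regularity to absorb a $|\mu|$-small neighbourhood of these boundary contributions, forcing $\bigcap_n B_n$ to be $|\mu|$-null, which completes the main inequality. The ``in particular'' clause is then immediate: if $\mu(D) = 0$ for every $D \in \mathscr{D}$, the approximation yields $\pn{\mu}{1} < \ep$ for every $\ep > 0$, whence $\mu = 0$.
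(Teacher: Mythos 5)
Your reduction is sound as far as it goes: writing $s_D\mu(D)=|\mu|(D)-2\min(\mu^+(D),\mu^-(D))$, invoking \Cref{lm:measure_lemma} for $|\mu|=\mu^++\mu^-$ to control the tail, and splitting the mixed-mass sum according to whether a cell meets both of the disjoint compacts $C^\pm$ correctly reduces the lemma to the claim that $|\mu|(B_n)\to 0$. The gap is that this claim is never actually proved, and the sketch you offer for it does not work. The cells $D\in\mathscr{D}_n$ are intersections of sets of the form $U\setminus R_i$, $R_i$ and $K\setminus\bigcup\mathscr{U}_i$, and are in general neither open nor closed; consequently $\bigcap_n D^x_n=\{x\}$ does \emph{not} imply that $D^x_n$ eventually misses the compact set $C^-$. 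That implication is valid for a decreasing sequence of \emph{closed} sets, but here one only gets that $\bigcap_n\cl{D^x_n}\cap C^-\neq\varnothing$, and $\bigcap_n\cl{D^x_n}$ may be strictly larger than $\{x\}$. Your proposed repair --- locating the offending cluster points on boundaries of sets $U$ and then ``refining $C^\pm$ by outer regularity to absorb a $|\mu|$-small neighbourhood of these boundary contributions'' --- has no force: the relevant boundary set is a union over the possibly uncountably many $U\in\bigcup_n\mathscr{U}_n$ (and varies with $x$), it need not be $|\mu|$-measurable, and even a single such boundary can carry positive or full $|\mu|$-measure, so there is nothing small to absorb. (There is a secondary measurability issue with $B_n$ itself, an uncountable union of Borel cells, though that one is repairable by discarding the $|\mu|$-null cells.)

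The paper's proof resolves exactly this difficulty \emph{before} any limiting argument is attempted: using \Cref{lm:measure_lemma} and inner regularity it chooses, for every $n$ simultaneously and with geometrically decreasing errors $2^{-n-1}\ep$, compact sets $K_D\subseteq D$ for finitely many $D\in\mathscr{D}_n$, and passes to the compact set $M=\bigcap_n\bigcup_D K_D$ with $|\mu|(K\setminus M)<\ep$. On $M$ the traces $M_D$ of the surviving cells form refining partitions into \emph{relatively clopen} sets which separate the points of $M$, so the span of $\set{\ind{M_D}}{D}$ is dense in $C(M)$ by Stone--Weierstrass, and the variation formula \eqref{eq:variation} then produces the signs directly. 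If you wish to keep your Hahn-decomposition framework, you would need to import precisely this compact set $M$, replacing $C^\pm$ by $C^\pm\cap M$ and the cells by their clopen traces on $M$, at which point the decreasing-compacts argument does go through; but that construction is the essential idea your proposal is missing.
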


\begin{proof}
 Let $\mu \in C(K)^*$ and $\ep>0$. By Lemma \ref{lm:measure_lemma} and the inner regularity of $|\mu|$, for each $n \in \omega$, there exist a finite set $\mathscr{L}_n\subseteq \mathscr{D}_n$ and compact $K_D \subseteq D$, $D \in \mathscr{L}_n$, such that 
 \[
  \pn{\mu}{1} - \sum_{D \in \mathscr{L}_n}|\mu|(K_D) < 2^{-n-1}\ep.
 \]
 Define the compact set $M=\bigcap_{n \in \omega}\left(\bigcup_{D \in \mathscr{L}_n} K_D \right)$. Then $|\mu|(K\setminus M) < \ep$. Given $n \in \omega$ and $D \in \mathscr{L}_n$, set $M_D = M \cap K_D$. Then define $\mathscr{G}_n=\set{D \in \mathscr{L}_n}{M_D \neq \varnothing}$, $n \in \omega$, and set $\mathscr{G}=\bigcup_{n\in\omega}\mathscr{G}_n$. The families $\set{M_D}{D \in \mathscr{G}_n}$, $n \in \omega$, form a refining sequence of pairwise disjoint clopen subsets of $M$, each having union $M$. It follows that $\mathscr{M}:=\set{M_D}{D \in \mathscr{G}}$ is a $\pi$-system and, as $\bigcup_{n\in\omega}\mathscr{U}_n$ separates points of $K$, the points of $M$ are separated by $\mathscr{M}$. Therefore, if we set $W=\set{\ind{M_D}}{D \in \mathscr{G}} \subseteq C(M)$, then $\aspan(W)$ is a subalgebra of $C(M)$ that contains the constant functions and separates points of $M$. Consequently, $C(M)=\cl{\aspan}^{\pndot{\infty}}(W)$ by the Stone-Weierstrass theorem.
 
Given \eqref{eq:variation} and the above, there exists $f \in \aspan(W)$ such that $\pn{f}{\infty} \leq 1$ and $|\mu|(M) - \lint{M}{}{f}{\mu} < \ep$. Again, from above, there exist a finite set $\mathscr{F}\subseteq \mathscr{G}$ and $a_D \in \R$, $D \in \mathscr{F}$, such that $f=\sum_{D \in \mathscr{F}} a_D \ind{M_D}$. As the $\mathscr{G}_n$ are refining, we can assume $\mathscr{F} \subseteq \mathscr{G}_n$ for some $n \in \omega$, which we fix hereafter. Since $M_D \neq \varnothing$ for all $D \in \mathscr{F}$, we have $|a_D| \leq \pn{f}{\infty} \leq 1$. Thus, if we set $s_D = \pm 1$ so that $s_D\mu(M_D)=|\mu(M_D)|$, $D \in \mathscr{F}$, then
\[
 |\mu|(M) - \sum_{D \in \mathscr{F}} s_D\mu(M_D) \leq |\mu|(M) - \sum_{D \in \mathscr{F}} a_D\mu(M_D) = |\mu|(M) - \lint{M}{}{f}{\mu} < \ep.
\]
Because the $D \in \mathscr{F}$ are pairwise disjoint, we have $D\setminus M_D = D\setminus M$ for all such $D$, thus
\[
 \sum_{D \in \mathscr{F}}|\mu|(D\setminus M_D) = \sum_{D \in \mathscr{F}}|\mu|(D\setminus M) \leq |\mu|(K\setminus M) < \ep.
\]
Combining these estimates yields
\[
 \pn{\mu}{1}-\sum_{D \in \mathscr{F}} s_D\mu(D) \leq \mu(K\setminus M) + |\mu|(M)- \sum_{D \in \mathscr{F}} s_D\mu(M_D) + \sum_{D \in \mathscr{F}}|\mu|(D\setminus M_D) < 3\ep. \qedhere
\]
\end{proof}

For our final lemma, recall that if two probability measures agree on a $\pi$-system of measurable subsets of some set $X$, then they agree on the $\sigma$-algebra generated by that $\pi$-system \cite{bogachev:07}*{Lemma 1.9.4}. Thus, if $\mu$ is a signed measure that vanishes on a $\pi$-system of measurable sets which contains $X$, then $\mu$ vanishes on the $\sigma$-algebra generated by that $\pi$-system.

\begin{lemma}\label{lm:families}
 Let $K$ be a Gruenhage compact space. Then there exist open sets $R_n$ and families $\mathscr{U}_n$, $n \in \omega$, that satisfy \Cref{pr:gruenhage_equiv} \ref{pr:gruenhage_equiv_3}, and have the property that if $\mu \in C(K)^*$ satisfies $\mu(U)=0$ for all $U \in \bigcup_{n\in\omega}\mathscr{U}_n$, then $\mu=0$.
\end{lemma}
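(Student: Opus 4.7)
The plan is to take any Gruenhage structure on $K$ furnished by \Cref{pr:gruenhage_equiv}~\ref{pr:gruenhage_equiv_3}, and enlarge it by adjoining countably many \emph{singleton} families---which trivially satisfy the conditions in~\ref{pr:gruenhage_equiv_3}---so that the resulting family of open sets is rich enough that vanishing of $\mu$ on all its members forces $\mu$ to vanish on every element of the $\pi$-system $\mathscr{D}=\bigcup_n\mathscr{D}_n$ discussed before \Cref{lm:measure_lemma}. \Cref{lm:norming} will then deliver $\mu=0$.

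More concretely, starting from $R_n$ and $\mathscr{U}_n$ satisfying \Cref{pr:gruenhage_equiv}~\ref{pr:gruenhage_equiv_3}, I would first form the countable collection of open sets
\[
 \mathscr{V}_0 = \{K\} \cup \set{R_n}{n\in\omega} \cup \set{\bigcup\mathscr{U}_n}{n\in\omega} \cup \bigcup_{n\in\omega}\mathscr{U}_n,
\]
and then let $\mathscr{A}$ be the collection of all finite intersections of members of $\mathscr{V}_0$. This $\mathscr{A}$ is again countable, consists of open sets, and is a $\pi$-system containing $K$. Enumerating $\mathscr{A}=\set{W_k}{k\in\omega}$, I would adjoin to the original Gruenhage structure the singleton families $\mathscr{U}'_k:=\{W_k\}$ paired with $R'_k:=\varnothing$. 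Each such pair trivially fulfils the conditions of~\ref{pr:gruenhage_equiv_3}: the requirement ``$U\cap V = R'_k$ for distinct $U,V\in\mathscr{U}'_k$'' is vacuous, and the point-separation property is preserved because we have only \emph{added} families. Hence the enlarged sequence (indexed over $\omega$ after interleaving) still witnesses that $K$ is Gruenhage.

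Next, suppose $\mu\in C(K)^*$ vanishes on every element of $\bigcup_{n\in\omega}\mathscr{U}_n$ together with every $W_k$; equivalently, $\mu$ vanishes on every $A\in\mathscr{A}$. By the Dynkin-type argument recalled just before the statement of the lemma, applied to the $\pi$-system $\mathscr{A}$ (which contains $K$), $\mu$ vanishes on the entire $\sigma$-algebra $\Sigma$ generated by $\mathscr{A}$. To invoke \Cref{lm:norming} and conclude $\mu=0$, it suffices to verify $\mathscr{D}\subseteq\Sigma$: every member of $\mathscr{P}_n$ is one of $R_n$, $K\setminus\bigcup\mathscr{U}_n$, or $U\setminus R_n$ with $U\in\mathscr{U}_n$, and each of these is manufactured from sets in $\mathscr{V}_0$ by at most one set-theoretic difference, so all lie in $\Sigma$; the elements of $\mathscr{D}_n$, being finite intersections of such, lie in $\Sigma$ as well.

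The only point where care is really needed is ensuring that condition~\ref{pr:gruenhage_equiv_3} survives the enlargement, and the use of singleton families renders this a formality, since the structural conditions on each $\mathscr{U}'_k$ become vacuous. With that observation in hand, the remaining work is a standard $\pi$-$\lambda$ manoeuvre feeding into \Cref{lm:norming}.
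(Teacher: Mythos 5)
Your overall strategy (enlarge the Gruenhage structure so that the separating open sets form a $\pi$-system containing $K$, then run the $\pi$-$\lambda$ argument into \Cref{lm:norming}) is exactly the strategy of the paper, and your verification that $\mathscr{D}$ lies in the generated $\sigma$-algebra is fine. However, there is a genuine gap at the central step: you assert that the collection $\mathscr{A}$ of finite intersections of members of $\mathscr{V}_0$ is countable and can be enumerated as $\set{W_k}{k\in\omega}$. This fails because each individual family $\mathscr{U}_n$ may be uncountable --- nothing in \Cref{df:gruenhage_defn} or \Cref{pr:gruenhage_equiv}~\ref{pr:gruenhage_equiv_3} bounds $\card{\mathscr{U}_n}$, and for a typical non-metrizable Gruenhage compactum (already for the one-point compactification of an uncountable discrete set) the separating families are uncountable. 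Hence $\mathscr{V}_0$ and $\mathscr{A}$ are uncountable, and the trick of adjoining one singleton family per element of $\mathscr{A}$ would require uncountably many new families, destroying the ``$n\in\omega$'' indexing that the lemma demands. Nor can you simply adjoin, for each finite $F\subseteq\omega$, the single family $\set{\bigcap_{n\in F}U_n}{U_n\in\mathscr{U}_n}$: two distinct such intersections meet in a set depending on which coordinates differ, so there is no single $R$ witnessing condition~\ref{pr:gruenhage_equiv_3} for that family.

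This is precisely the difficulty the paper's proof is engineered to overcome: for each finite $F$ it forms the family $\mathscr{V}_F$ of \emph{modified} intersections $\bigcap_{n\in F}\bigl(U_n\cup\bigcup_{i\in F\setminus\{n\}}R_i\bigr)$, whose pairwise intersections all equal the fixed set $S_F=\bigcup_{i\in F}R_i$, so that the (possibly uncountably many) intersections are packaged into countably many admissible families. A short computation ($H=V\setminus S_F$, with $S_F$ itself realised as the unique member of some $\mathscr{V}_G$) then recovers $\mu(H)=0$ for the genuine intersections $\bigcap_{n\in F}U_n\setminus R_n$, which form the $\pi$-system fed into the Dynkin argument. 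Your measure-theoretic endgame would then go through verbatim, but without this (or some comparable) device the enlargement step does not produce a legitimate Gruenhage structure, so the proof as written is incomplete.
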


\begin{proof}
 As $K$ is Gruenhage, we can take open sets $R_n$ and families $\mathscr{U}_n$, $n \in \omega$, that satisfy \Cref{pr:gruenhage_equiv} \ref{pr:gruenhage_equiv_3}. By adding new families if necessary, we can assume that, given $n \in \omega$, there exist $j,k,m \in \omega$ such that $\{R_n\}=\mathscr{U}_j=\mathscr{U}_k$, $\{\bigcup\mathscr{U}_n\}=\mathscr{U}_m$, $R_j=R_n$ and $R_k=R_m=\varnothing$. Define $S_\varnothing=\varnothing$, $\mathscr{V}_\varnothing = \{K\}$ and, given non-empty finite $F \subseteq \omega$, set 
 \[
  S_F = \bigcup_{i \in F} R_i \quad\text{and}\quad \mathscr{V}_F = \set{\bigcap_{n \in F}\bigg(U_n \cup \bigcup_{i \in F\setminus\{n\}} R_i \bigg)}{U_n \in \mathscr{U}_n,\, n \in F}.
 \]
 It is easy to check that the sets $S_F$ and $\mathscr{V}_F$, $F \subseteq \omega$ finite, also satisfy \Cref{pr:gruenhage_equiv} \ref{pr:gruenhage_equiv_3}. Also, we can see that, given non-empty finite $F \subseteq \omega$, there exists non-empty finite $G \subseteq \omega$ such that $\mathscr{V}_G=\{S_F\}$. Indeed, pick $n \in F$, let $j \in \omega$ such that $\{R_n\}=\mathscr{U}_j$ and $R_j=R_n$, and set $G=\{j\} \cup F\setminus\{n\}$.
 
Next, we define the following $\pi$-system. Set
\[
 \mathscr{H} = \{\varnothing,K\} \cup \set{\bigcap_{n \in F} U_n\setminus R_n}{F \subseteq \omega \text{ is finite},\, F \neq \varnothing\text{, and }U_n \in \mathscr{U}_n \text{ for all } n \in F}.
\]
To see that $\mathscr{H}$ is a $\pi$-system, we let $H,H' \in \mathscr{H}$. If $H=\varnothing$ or $H=K$ then clearly $H \cap H' \in \mathscr{H}$, so we assume otherwise; likewise for $H'$. Let $F,F' \subseteq \omega$ be finite, non-empty sets, and pick $U_n \in \mathscr{U}_n$, $n \in F$, and $U'_n \in \mathscr{U}_n$, $n \in F'$, such that $H = \bigcap_{n \in F} U_n\setminus R_n$ and $H' = \bigcap_{n \in F'} U'_n\setminus R_n$. If $U_n \neq U'_n$ for some $n \in F \cap F'$, then $U_n \cap U'_n = R_n$, giving $H \cap H' = \varnothing$. If not, then
\[
 H \cap H' \in \bigg(\bigcap_{n \in F} U_n\setminus R_n\bigg) \cap \bigg(\bigcap_{n \in F'\setminus F} U'_n\setminus R_n \bigg) \in \mathscr{H}.
\]

Now suppose that $\mu \in C(K)^*$ and $\mu(V)=0$ for all $V \in \bigcup_F \mathscr{V}_F$, where the $F$ range over the finite subsets of $\omega$. Then $\mu(H)=0$ for all $H \in \mathscr{H}$. Indeed, let $H \in \mathscr{H}$. If $H=K$ then $\mu(H)=0$ because $K \in \mathscr{V}_\varnothing$. Else, let $F \subseteq \omega$ be finite and non-empty, and pick $U_n \in \mathscr{U}_n$, $n \in F$, such that $H = \bigcap_{n \in F} U_n\setminus R_n$. Set
\[
 V = \bigcap_{n \in F}\bigg(U_n \cup \bigcup_{i \in F\setminus\{n\}} R_i \bigg) \in \mathscr{V}_F
\]
and pick finite, non-empty $G \subseteq \omega$ satisfying $\mathscr{V}_G=\{S_F\}$. Then $H=V\setminus S_F$, giving $\mu(H)=0$.

Finally, we let $\mathscr{A}$ be the algebra of sets generated by $\mathscr{H}$. Evidently $\mathscr{P}_n \subseteq \mathscr{A}$ for all $n \in \omega$, and thus $\mathscr{D} \subseteq \mathscr{A}$. Then $\mu(D)=0$ for all $D \in \mathscr{D}$ because $\mathscr{H}$ is a $\pi$-system containing $K$ and $\mu(H)=0$ for all $H \in \mathscr{H}$. Therefore $\mu=0$ by \Cref{lm:norming}.
\end{proof}

We are ready to start proving more properties enjoyed by Gruenhage compact spaces. Given a class of compact spaces $K$, it is of interest to know whether the dual unit ball $(B_{C(K)^*},w^*)$ of signed Radon measures of total variation at most one also belongs to the class. This is true in the case of Eberlein, Gul'ko  and descriptive compact spaces -- see \cite{amir:lindenstrauss:68}*{Theorem 2}, \cite{talagrand:79}*{Th\'eor\`eme 3.6} and \cite{raja:03}*{Theorem 1.3}, respectively.

Recall that, given a compact space $K$ and an open set $U \subseteq K$, the function $\mu \mapsto \mu^+(U)$, $\mu \in C(K)^*$, is $w^*$-lower semicontinuous because
\begin{equation}\label{eq:lsc}
 \mu^+(U) = \sup\set{\lint{K}{}{f}{\mu}}{f \in C(K),\;0 \leq f \leq 1 \text{ and $f$ vanishes on $K\setminus U$}}.
\end{equation}
Similarly, $\mu \mapsto \mu^-(U)$ is $w^*$-lower semicontinuous.

\begin{proposition}[{\cite{smith:09}*{Proposition 24}}]\label{pr:gruenhage_ball}
If $K$ is Gruenhage compact then so is $B_{C(K)^*}$.
\end{proposition}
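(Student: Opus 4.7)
The plan is to apply \Cref{lm:families} to obtain open sets $R_n \subseteq K$ and families $\mathscr{U}_n$ of open subsets of $K$, $n \in \omega$, satisfying \Cref{pr:gruenhage_equiv} \ref{pr:gruenhage_equiv_3} together with the additional property that the only $\mu \in C(K)^*$ with $\mu(U) = 0$ for every $U \in \bigcup_{n \in \omega} \mathscr{U}_n$ is $\mu = 0$. Note that $B_{C(K)^*}$ is $\weakstar$-compact by the Banach--Alaoglu theorem, so it remains only to verify the Gruenhage property. By \eqref{eq:lsc}, for every open $U \subseteq K$ the maps $\mu \mapsto \mu^+(U)$ and $\mu \mapsto \mu^-(U)$ are $\weakstar$-lower semicontinuous on $C(K)^*$, and consequently the sets
\[
W^\epsilon_{n, U, r} := \{\mu \in B_{C(K)^*} : \mu^\epsilon(U) > r\}, \qquad \epsilon \in \{+,-\}, \ U \in \mathscr{U}_n, \ r \in \Q \cap (0, \infty),
\]
are $\weakstar$-open in $B_{C(K)^*}$. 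I will show that the countable collection of families $\mathscr{W}^\epsilon_{n, r} := \{W^\epsilon_{n, U, r} : U \in \mathscr{U}_n\}$ witnesses that $(B_{C(K)^*}, \weakstar)$ is Gruenhage in the sense of \Cref{df:gruenhage_defn}.

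For condition (i) of that definition, fix distinct $\mu, \nu \in B_{C(K)^*}$. By uniqueness of the Jordan decomposition, either $\mu^+ \neq \nu^+$ or $\mu^- \neq \nu^-$, so after choosing a suitable sign $\epsilon \in \{+,-\}$ we may assume $\mu^\epsilon - \nu^\epsilon$ is a non-zero element of $C(K)^*$. By \Cref{lm:families}, there exist $n \in \omega$ and $U \in \mathscr{U}_n$ with $(\mu^\epsilon - \nu^\epsilon)(U) \neq 0$; interchanging $\mu$ and $\nu$ if necessary, we may arrange $\mu^\epsilon(U) < \nu^\epsilon(U)$, and then any rational $r$ with $\mu^\epsilon(U) < r < \nu^\epsilon(U)$ yields $\{\mu, \nu\} \cap W^\epsilon_{n, U, r} = \{\nu\}$.

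For condition (ii), the goal is to show that $\ord{\mu}{\mathscr{W}^\epsilon_{n, r}}$ is finite for the same triple $(\mu, n, r)$. Since $R_n \subseteq U$, we have $\mu^\epsilon(R_n) \leq \mu^\epsilon(U) < r$. If $V_1, \ldots, V_N \in \mathscr{U}_n$ are distinct and $\mu^\epsilon(V_i) > r$ for every $i$, then decomposing $\mu^\epsilon(V_i) = \mu^\epsilon(R_n) + \mu^\epsilon(V_i \setminus R_n)$ gives $\mu^\epsilon(V_i \setminus R_n) > r - \mu^\epsilon(R_n) > 0$. Because $U \cap V = R_n$ for distinct $U, V \in \mathscr{U}_n$ by \Cref{pr:gruenhage_equiv} \ref{pr:gruenhage_equiv_3}, the sets $V_i \setminus R_n$ are pairwise disjoint, and their $\mu^\epsilon$-masses sum to at most $\mu^\epsilon(K) \leq 1$, forcing $N \leq 1/(r - \mu^\epsilon(R_n)) < \infty$.

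The main conceptual obstacle, and the reason for the detour through the Jordan decomposition, is that the linear functional $\mu \mapsto \mu(U)$ is in general neither $\weakstar$-lower nor $\weakstar$-upper semicontinuous on $C(K)^*$ for an open $U \subseteq K$, so it cannot be used directly to build open sets in $B_{C(K)^*}$. The positive and negative parts $\mu^\pm(U)$ are $\weakstar$-lower semicontinuous by \eqref{eq:lsc}, but this only recovers $\mu$ up to its Jordan decomposition. Applying \Cref{lm:families} to the nonzero signed Radon measure $\mu^\epsilon - \nu^\epsilon$ (rather than to $\mu - \nu$ directly) bridges this gap and is the crucial point; once that separation is secured, the finite-order bound drops out of the quantitative disjointness already encoded in \Cref{pr:gruenhage_equiv} \ref{pr:gruenhage_equiv_3}.
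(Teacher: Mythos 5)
Your proposal is correct and follows essentially the same route as the paper: both apply \Cref{lm:families} to the difference of the positive (or negative) parts to separate two measures by a $w^*$-open set of the form $\set{\mu}{\mu^\epsilon(U)>r}$, and both derive the finite-order bound from the pairwise disjointness of the sets $U\setminus R_n$ together with $\mu^\epsilon(R_n)<r$. The only differences are cosmetic (your uniform sign parameter $\epsilon$ and the range $r\in\Q\cap(0,\infty)$ versus the paper's $q\in(0,1)\cap\Q$).
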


\begin{proof}
Let $R_n$ and $\mathscr{U}_n$, $n\in\omega$, be the sets and families furnished by \Cref{lm:families}. Given $U \in \mathscr{U}_n$ and $q \in (0,1) \cap \Q$, by \eqref{eq:lsc} the set
\[
 V^+_{U,n,q} := \set{\mu \in B_{C(K)^*}}{\mu^+(U) > q}                                                                                                    
\]
is $w^*$-open in $B_{C(K)^*}$. Define $\mathscr{V}^+_{n,q}=\{V^+_{U,n,q} \,:\, U \in \mathscr{U}_n\}$. Analogously, define $V^-_{U,n,q}$, $U \in \mathscr{U}_n$ and $\mathscr{V}^-_{n,q}$. We claim that the families $\mathscr{V}^\pm_{n,q}$ satisfy \Cref{df:gruenhage_defn}, meaning that $B_{C(K)^*}$ is Gruenhage. 

Let $\mu,\nu \in B_{C(K)^*}$ be distinct. Either $\mu^+ \neq \nu^+$ or $\mu^- \neq \nu^-$. We suppose that the former holds; if the latter holds then we repeat the argument below using the families $\mathscr{V}^-_{n,q}$. By Lemma \ref{lm:families}, there exist $n \in \omega$ and $U_0 \in \mathscr{U}_n$ such that $\mu^+(U_0) \neq \nu^+(U_0)$. Without loss of generality, let $q \in (0,1) \cap \Q$ such that $\mu^+(U_0) < q < \nu^+(U_0)$. Then $\{\mu,\nu\} \cap V^+_{U_0,n,q} = \{\nu\}$. Now assume $\mu \in V^+_{U,n,q}$ for some $U \in \mathscr{U}_n$. Then $\mu^+(R_n) \leq \mu^+(U_0) < q < \mu^+(U)$, which implies
\[
 \mu^+(U\setminus R_n)=\mu^+(U)-\mu^+(R_n) > q-\mu^+(R_n)>0.
\]
Since the sets $U\setminus R_n$, $U \in \mathscr{U}_n$, are pairwise disjoint, there can only be finitely many $U \in \mathscr{U}_n$ for which $\mu \in V^+_{U,n,q}$.
\end{proof}

Finally, we can present the main renorming result from \cite{smith:09}. Given the above we can provide a proof that relies on less specialized techniques than the original. We call a (semi)norm $\ndot$ on $C(K)^*$ a \emph{lattice (semi)norm} if it respects the natural lattice structure on $C(K)^*$, i.e.~$\n{\mu} \leq \n{\nu}$ whenever $|\mu| \leq |\nu|$.

\begin{theorem}[{\cite{smith:09}*{Theorem 7}}]\label{th:Gruenhage_strictly_convex}
 If $K$ is Gruenhage compact then $C(K)^*$ admits an equivalent dual lattice norm.
\end{theorem}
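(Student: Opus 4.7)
The plan is to apply \Cref{lm:families} to obtain open sets $R_n\subseteq K$ and families $\mathscr{U}_n$ of open subsets of $K$, $n\in\omega$, satisfying the faithfulness property that $\mu=0$ is the only signed Radon measure on $K$ annihilating every $U\in\bigcup_n\mathscr{U}_n$. For each $n$, the sets $U\setminus R_n$ for $U\in\mathscr{U}_n$ are pairwise disjoint and sit inside $K\setminus R_n$, so $\sum_U|\mu|(U\setminus R_n)\leq\pn{\mu}{1}$, and the formula
\[
 p_n(\mu)^2 \,=\, |\mu|(R_n)^2 \,+\, \sum_{U\in\mathscr{U}_n}|\mu|(U\setminus R_n)^2
\]
defines a map bounded by $\sqrt{2}\,\pn{\mu}{1}$. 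Subadditivity of $p_n$ follows from the $\ell_2$-triangle inequality together with $|\mu+\nu|(E)\leq|\mu|(E)+|\nu|(E)$; it is homogeneous since $|\lambda\mu|=|\lambda||\mu|$; and it depends monotonically on $|\mu|$ alone, so it is a lattice seminorm on $C(K)^*$. The candidate norm is
\[
 \tn{\mu}^2 \,=\, \pn{\mu}{1}^2 \,+\, \sum_{n=0}^\infty 2^{-n}\,p_n(\mu)^2,
\]
which is equivalent to $\pndot{1}$ and again a lattice norm.

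For strict convexity, suppose $\tn{\mu}=\tn{\nu}=\tn{\tfrac{1}{2}(\mu+\nu)}$. Applying \Cref{lm:parallelogram} to the seminorms appearing in the $\ell_2$-sum yields $\pn{\mu}{1}=\pn{\nu}{1}=\tfrac{1}{2}\pn{\mu+\nu}{1}$ and $p_n(\mu)=p_n(\nu)=p_n(\tfrac{1}{2}(\mu+\nu))$ for every $n$. The equality of total variations forces $\pn{\mu+\nu}{1}=\pn{\mu}{1}+\pn{\nu}{1}$, which implies that $\mu$ and $\nu$ share a common Hahn decomposition; consequently $|\mu+\nu|=|\mu|+|\nu|$ as measures, and $|\tfrac{1}{2}(\mu+\nu)|(E)=\tfrac{1}{2}(|\mu|(E)+|\nu|(E))$ for every Borel $E$. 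Substituting into the midpoint equality of $p_n^2$ and invoking the equality case of the Cauchy-Schwarz inequality in $\ell_2$ then forces $|\mu|(R_n)=|\nu|(R_n)$ and $|\mu|(U\setminus R_n)=|\nu|(U\setminus R_n)$ for every $U\in\mathscr{U}_n$; hence $|\mu|(U)=|\nu|(U)$ for every $U\in\bigcup_n\mathscr{U}_n$. \Cref{lm:families} applied to the signed measure $|\mu|-|\nu|$ gives $|\mu|=|\nu|$, which combined with the common Hahn decomposition yields $\mu=\nu$.

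The principal obstacle is showing that $\tn{\cdot}$ is $w^*$-lower semicontinuous, which via \Cref{pr:lsc} is precisely what makes it a dual norm. The difficulty is that while $\pn{\cdot}{1}$ and each $\mu\mapsto|\mu|(U)$ (for $U$ open) are $w^*$-LSC, the quantities $|\mu|(U\setminus R_n)=|\mu|(U)-|\mu|(R_n)$ are differences of $w^*$-LSC functions and so are not LSC on their own. The plan here is to recast $p_n^2$ in a manifestly LSC form by expressing each partial sum $|\mu|(R_n)^2+\sum_{U\in\mathcal{F}}|\mu|(U\setminus R_n)^2$, for finite $\mathcal{F}\subseteq\mathscr{U}_n$, as a supremum of quantities of the form $\bigl(\int g\,d|\mu|\bigr)^2$, where $g$ ranges over an appropriate family of continuous non-negative functions adapted to $R_n$ and to the pieces $U\setminus R_n$, exploiting inner regularity of $|\mu|$, normality of $K$, and the auxiliary families with trivial $R_m$ built into \Cref{lm:families}. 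Since each $\mu\mapsto\int g\,d|\mu|$ is a $w^*$-LSC lattice seminorm, each such squared quantity is $w^*$-LSC, and suprema and countable $\ell_2$-combinations of non-negative $w^*$-LSC functions remain $w^*$-LSC, delivering the desired property for $\tn{\cdot}$.
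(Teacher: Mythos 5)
Your construction of the norm and your strict-convexity argument are fine as far as they go (the route via $\pn{\mu+\nu}{1}=\pn{\mu}{1}+\pn{\nu}{1}\Rightarrow|\mu+\nu|=|\mu|+|\nu|\Rightarrow$ common Hahn decomposition is correct and is in fact tidier than the combinatorial argument the paper uses). But the step you defer -- $w^*$-lower semicontinuity -- is not merely the hardest step; it is false for your seminorms, so no amount of rewriting $p_n^2$ as a supremum can rescue it. The obstruction is that $\mu\mapsto|\mu|(U\setminus R_n)$ fails to be $w^*$-LSC when $R_n\neq\varnothing$, and this failure survives the squaring and summing. Concretely, take $K=\omega+1$, $R=\set{2k}{k\in\omega}$ (open), and $\mathscr{U}=\{K\}$, which is a legitimate family of the type produced by \Cref{lm:families} (there the sets $S_F$ are nonempty and the members of $\mathscr{V}_F$ can be all of $K$). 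Then $p(\mu)^2=|\mu|(R)^2+|\mu|(K\setminus R)^2$, and for $\mu_j=\tfrac12(\delta_{2j}+\delta_{2j+1})\to\delta_\omega$ in the $w^*$-topology one gets $p(\mu_j)^2=\tfrac14+\tfrac14=\tfrac12$ while $p(\delta_\omega)^2=0+1=1$: the value jumps \emph{up} at the limit. Since $\pn{\mu_j}{1}=\pn{\delta_\omega}{1}=1$ and the remaining families contribute nothing to compensate, $\tndot$ itself fails $w^*$-lower semicontinuity, so by \Cref{pr:lsc} it is not a dual norm. Your proposed repair cannot work in principle: any supremum of functions of the form $\bigl(\int g\,\md|\mu|\bigr)^2$ with $g$ continuous that is dominated by $p_n^2$ for \emph{all} $\mu$ (test against Dirac measures) recovers at most $|\mu|$ of the interior of $U\setminus R_n$, which undercounts exactly the mass that causes the jump above.

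This is precisely why the paper's proof uses the seminorms
\[
 \pn{\mu}{m,n}=\sup\set{|\mu|\Bigl(R_n\cup\bigcup\mathscr{F}\Bigr)}{\mathscr{F}\subseteq\mathscr{U}_n,\ \card{\mathscr{F}}\leq m},
\]
rather than an $\ell_2$-sum over the pieces $U\setminus R_n$: each set $R_n\cup\bigcup\mathscr{F}$ is \emph{open}, so by \eqref{eq:lsc} each $|\mu|(R_n\cup\bigcup\mathscr{F})$ is $w^*$-LSC, and a supremum of $w^*$-LSC functions is $w^*$-LSC. The price of this substitution is that the clean Cauchy--Schwarz/equality argument you use to deduce $|\mu|(U)=|\nu|(U)$ from the midpoint condition no longer applies, which is why the paper needs the more delicate analysis of where the suprema defining $\pn{\cdot}{m,n}$ are attained. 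If you want to salvage your approach, you must replace your $p_n$ by $w^*$-LSC seminorms built from measures of open sets and then redo the separation argument accordingly.
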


\begin{proof}
 Let $R_n$ and $\mathscr{U}_n$, $n\in\omega$, be the sets and families given by \Cref{lm:families}. Given $m,n \in \omega$, we define the seminorm $\ptndot{m,n}$ on $C(K)^*$ by
 \[
  \pn{\mu}{m,n} = \sup\set{|\mu|\bigg(R_n \cup \bigcup\mathscr{F} \bigg)}{\mathscr{F} \subseteq \mathscr{U}_n \text{ and }\card{\mathscr{F}} \leq m}.
 \]
 It is evident that $\pndot{m,n} \leq \pndot{1}$ for all $m,n \in \omega$. By \eqref{eq:lsc}, these seminorms are $w^*$-lower semicontinuous. Moreover, it is clear that $|\mu| \leq |\nu|$ implies $\pn{\mu}{m,n} \leq \pn{\nu}{m,n}$. Now define $\ndot$ on $C(K)^*$ by
 \[
  \n{\mu}^2 = \pn{\mu}{1}^2 + \sum_{m,n \in \omega} 2^{-m-n} \pn{\mu}{m,n}^2.
 \]
 This function is a lattice norm on $C(K)^*$ satisfying $\pndot{1} \leq \ndot \leq \sqrt{5}\pndot{1}$, so it is equivalent to the canonical variation norm on $C(K)^*$. Given that it is a sum of $w^*$-lower semicontinuous functions, it is in addition a dual norm by \Cref{pr:lsc}.
 
 It remains to show that $\ndot$ is strictly convex. Let $\mu,\nu \in C(K)^*$ satisfy $\n{\mu}=\n{\nu}=\frac{1}{2}\n{\mu+\nu}$. We show that $\mu=\nu$. The first step is to show that $|\mu|=|\nu|$. For a contradiction, let us suppose otherwise. By \Cref{lm:parallelogram}, we have $\pn{\mu}{m,n}=\pn{\nu}{m,n}=\frac{1}{2}\pn{\mu+\nu}{m,n}$ for all $m,n \in \omega$. In particular, $|\mu|(R_n)=\pn{\mu}{0,n}=\pn{\nu}{0,n}=|\nu|(R_n)$ for all $n\in\omega$. As $|\mu| \neq |\nu|$, by \Cref{lm:families} there exist $n \in \omega$ and $U_0 \in \mathscr{U}_n$ such that $|\mu|(U_0) \neq |\nu|(U_0)$. Given $U \in \mathscr{U}_n$, let $a_U=\max\{|\mu|(U),|\nu|(U)\}$. Then $r:=a_{U_0}-|\mu|(R_n)=a_{U_0}-|\nu|(R_n)>0$. If we set
 \[
 \mathscr{H} = \set{U \in \mathscr{U}_n}{a_U \geq a_{U_0}},
 \]
 then we see that $\mathscr{H}$ is finite. Indeed, let $U \in \mathscr{H}$. If $|\mu|(U) \geq |\nu|(U)$ then
 \[
  |\mu|(U\setminus R_n) = |\mu|(U)-|\mu|(R_n) \geq a_{U_0} - |\mu|(R_n) = r, 
 \]
 and likewise $|\nu|(U\setminus R_n) \geq r$ if $|\nu|(U) \geq |\mu|(U)$. Since the sets $U\setminus R_n$, $U \in \mathscr{U}_n$, are pairwise disjoint, it follows that $\mathscr{H}$ is finite.

Now let $a=\max\set{a_U}{U \in \mathscr{H} \text{ and }|\mu|(U) \neq |\nu|(U)} \geq a_{U_0}$. We define the finite (possibly empty) set
 \[
  \mathscr{F}_0 = \set{U \in \mathscr{H}}{|\mu|(U) = |\nu|(U) \geq a},
 \]
and in addition
\[
 \mathscr{F}_1 = \set{U \in \mathscr{H}}{|\mu|(U) < |\nu|(U) = a} \quad\text{and}\quad \mathscr{F}_2 = \set{U \in \mathscr{H}}{|\nu|(U) < |\mu|(U) = a}.
\]
Either $\mathscr{F}_1$ or $\mathscr{F}_2$ is non-empty; without loss of generality we assume $\mathscr{F}_1 \neq \varnothing$.
 
Define $\mathscr{F}=\mathscr{F}_0 \cup \mathscr{F}_1$ and set $m=\card\mathscr{F}$. We claim that the supremum in the definition of $\pn{\mu+\nu}{m,n}$ is attained, and attained only by $\mathscr{F}$. Indeed, let $\mathscr{G}\subseteq\mathscr{U}_n$ have cardinality at most $m$, with $\mathscr{G}\neq\mathscr{F}$. Then $\mathscr{F}\setminus\mathscr{G} \neq \varnothing$. Given $U \in \mathscr{F}$ and $V \in \mathscr{U}_n\setminus\mathscr{F}$, we estimate
\[
 |\nu|(U\setminus R_n) = |\nu|(U)-|\nu|(R_n) \geq a-|\nu|(R_n) > |\nu|(V)-|\nu|(R_n) = |\nu|(V\setminus R_n),
\]
and as $\card\mathscr{F}\setminus\mathscr{G} \geq \max\{1,\card\mathscr{G}\setminus\mathscr{F}\}$, we further estimate
\[
 |\nu|\bigg(R_n \cup \bigcup\mathscr{F} \bigg) - |\nu|\bigg(R_n \cup \bigcup\mathscr{G} \bigg) = \sum_{U \in \mathscr{F}\setminus\mathscr{G}} |\nu|(U\setminus R_n) - \sum_{V \in \mathscr{G}\setminus\mathscr{F}} |\nu|(V\setminus R_n) > 0.
\]
We conclude that
\[
 |\mu+\nu|\bigg(R_n \cup \bigcup\mathscr{G} \bigg) \leq \pn{\mu}{m,n} + |\nu|\bigg(R_n \cup \bigcup\mathscr{G} \bigg) < \pn{\mu}{m,n} + \pn{\nu}{m,n} = \pn{\mu+\nu}{m,n}.
\]
This completes the proof of the claim.

However, we can also estimate
\begin{align*}
 |\mu|\bigg(R_n \cup \bigcup\mathscr{F} \bigg) &= |\mu|(R_n) + \sum_{U \in \mathscr{F}_0} |\mu|(U\setminus R_n) + \sum_{U \in \mathscr{F}_1} |\mu|(U\setminus R_n)\\ 
 &< |\nu|(R_n) + \sum_{U \in \mathscr{F}_0} |\nu|(U\setminus R_n) + \sum_{U \in \mathscr{F}_1} |\nu|(U\setminus R_n)\\
 &= |\nu|\bigg(R_n \cup \bigcup\mathscr{F} \bigg) \leq \pn{\nu}{m,n},
\end{align*}
which yields the contradiction
\[
 \pn{\mu+\nu}{m,n} = |\mu+\nu|\bigg(R_n \cup \bigcup\mathscr{F} \bigg) \leq |\mu|\bigg(R_n \cup \bigcup\mathscr{F} \bigg) + \pn{\nu}{m,n} < 2\pn{\nu}{m,n}.
\]
Therefore $|\mu|=|\nu|$.

Set $\lambda=\frac{1}{2}(\mu+\nu)$. Then $\n{\mu}=\n{\lambda}=\frac{1}{2}\n{\mu+\lambda}$, so we can repeat the argument above to obtain $\frac{1}{2}|\mu+\nu|=|\lambda|=|\mu|=|\nu|$. This implies $\mu=\nu$ via a lattice argument. Set $\tau=\mu^+-\nu^-$. Then $|\mu|=|\nu|$  implies $\tau=\nu^+-\mu^-$, meaning $2\lambda=\mu+\nu=2\tau$. Hence $\mu^++\mu^-=|\mu|=|\tau|=\tau^++\tau^-$. In addition we see that $\tau^+ = (\mu^+-\nu^-)^+ \leq \mu^+$ and $\tau^- = (-\tau)^+ = (\mu^--\nu^+)^+ \leq \mu^-$. Therefore $\mu^+=\tau^+$ and $\mu^-=\tau^-$, giving $\mu=\tau=\lambda$ and hence $\mu=\nu$. This completes the proof.
\end{proof}

Theorem \ref{th:Gruenhage_strictly_convex} can be generalized to what we can call \emph{Gruenhage generated} dual spaces.

\begin{corollary}[{cf.~\cite{smith:09}*{Corollary 10}}]\label{co:gruenhage_generated}
Let the dual Banach space $X^*$ satisfy $X^*=\cl{\aspan}^{\ndot}(K)$, where $(K,w^*)$ is Gruenhage compact. Then $X^*$ admits an equivalent strictly convex dual norm.
\end{corollary}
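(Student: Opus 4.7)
The plan is to reduce the corollary to \Cref{th:Gruenhage_strictly_convex} via the transfer technique in \Cref{th:transfer}. First, I would introduce the evaluation operator $T\colon X\to C(K)$, where $C(K)$ denotes the Banach space of continuous real-valued functions on the compact space $(K,w^*)$, defined by $(Tx)(k)=k(x)$ for $x\in X$ and $k\in K$. Since each $k\in K\subseteq X^*$ is by definition $w^*$-continuous as a functional on $X^*$, the map $k\mapsto k(x)$ is continuous on $(K,w^*)$, so $Tx$ genuinely lies in $C(K)$. The operator $T$ is linear, and boundedness follows from $\sup_{k\in K}\n{k}<\infty$, which is guaranteed by the Banach-Steinhaus theorem together with the $w^*$-compactness of $K$.

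Next, I would consider the dual operator $T^*\colon C(K)^*\to X^*$, which is automatically bounded, linear and $w^*$-$w^*$ continuous. The key computation is to evaluate $T^*$ on point masses: given $k\in K$, the Dirac measure $\delta_k\in C(K)^*$ satisfies $(T^*\delta_k)(x)=\delta_k(Tx)=(Tx)(k)=k(x)$ for every $x\in X$, so $T^*\delta_k=k$ as elements of $X^*$. Consequently $K\subseteq\ran(T^*)$, and by linearity $\aspan(K)\subseteq\ran(T^*)$; since the hypothesis gives $\cl{\aspan}^{\ndot}(K)=X^*$, the range of $T^*$ is norm-dense in $X^*$.

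The final step is to invoke \Cref{th:Gruenhage_strictly_convex}, which yields an equivalent strictly convex dual norm on $C(K)^*$ from the assumption that $(K,w^*)$ is Gruenhage compact, and then to apply \Cref{th:transfer} to the dual operator $T^*$ in order to transport this property onto $X^*$. I do not anticipate a significant obstacle: the substantive analytic work has already been carried out in the proof of \Cref{th:Gruenhage_strictly_convex}, and the present corollary only requires the two routine verifications that $T$ is a well-defined bounded operator from $X$ into $C(K,w^*)$ and that $T^*$ has dense range, with the bridging step from $C(K)^*$ to $X^*$ being supplied verbatim by \Cref{th:transfer}.
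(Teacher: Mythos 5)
Your proposal is correct and follows essentially the same route as the paper: the evaluation operator $T\colon X\to C(K)$, passage to the dual operator $T^*$, \Cref{th:Gruenhage_strictly_convex} applied to $C(K)^*$, and the transfer via \Cref{th:transfer}. The only (minor) difference is that you establish norm-density of $\ran(T^*)$ directly from the identity $T^*\delta_k=k$ together with $X^*=\cl{\aspan}^{\ndot}(K)$, whereas the paper deduces it from the injectivity of $T$; your justification is arguably the cleaner one, and it is exactly the computation the paper itself records later in the proof of \Cref{co:funct_anal}.
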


\begin{proof}
As $K$ is $w^*$-compact, it is bounded by the uniform boundedness theorem. Define the bounded linear operator $T:X \to C(K)$ by $(Tx)(f)=f(x)$, $f \in K$, $x \in X$. Then $T$ is injective because $X^*=\cl{\aspan}^{\ndot}(K)$. Indeed, if $x,y \in X$ are distinct then $g(x) \neq g(y)$ for some $g \in X^*$, whence $f(x)\neq f(y)$ for some $f \in K$. It follows that the dual operator $T^*:C(K)^*\to X^*$ has dense range. By Theorem \ref{th:Gruenhage_strictly_convex}, $C(K)^*$ admits an equivalent strictly convex dual norm. Hence by \Cref{th:transfer}, $X^*$ also admits such a norm.
\end{proof}

A few years before \cite{smith:09} was published, the following property of Banach spaces was introduced. 

\begin{definition}[\cite{fabian:montesinos:zizler:04}*{Definition 4}]\label{df:G}
 A Banach space $X$ has property G if there exists a bounded set $\Gamma=\bigcup_{n \in \omega} \Gamma_n \subseteq X$, with the property that whenever $f,g \in B_{X^*}$ are distinct, there exist $n \in \omega$ and $x \in \Gamma_n$ such that $f(x)\neq g(x)$ and at least one of the sets
\[ 
 \set{y \in \Gamma_n}{|f(y)| > \tfrac{1}{4}|(f-g)(y)|} \quad\text{and}\quad \set{y \in \Gamma_n}{|g(y)| > \tfrac{1}{4}|(f-g)(y)|}
\]
is finite.
\end{definition}

This definition is clearly inspired by \Cref{df:gruenhage_defn}. The authors remark on  \cite{fabian:montesinos:zizler:04}*{p.~454} that property G is related to Gruenhage compact spaces, and go on to prove that if $X$ possesses property G then $X^*$ admits an equivalent strictly convex dual norm \cite{fabian:montesinos:zizler:04}*{Theorem 5}. This result is a corollary of \Cref{co:gruenhage_generated} (see \cite{smith:09}*{Corollary 13}). All one needs to observe is that if $X$ has property G then $(B_{X^*},w^*)$ is Gruenhage compact \cite{smith:09}*{Proposition 12}. We note that each $w^*$-open subset of $B_{X^*}$ that makes up the families constructed in the proof of \cite{smith:09}*{Proposition 12} is in fact a $w^*$-open \emph{slice} of $B_{X^*}$, i.e.~it is the intersection of $B_{X^*}$ with a $w^*$-open half space of the form $\set{f \in X^*}{f(x) > \alpha}$, for some $x \in X$ and $\alpha \in \R$. There is, in principle, a big difference between families consisting solely of such slices, versus general $w^*$-open subsets; this is a point that we return to in \Cref{sect:descendents}.

We finish the section with two more stability properties of Gruenhage compact spaces. The second follows directly from \Cref{th:gruenhage_stability} but, given its functional-analytic flavor, we postponed it until now.

\begin{corollary}[{cf.~\cite{smith:09}*{Proposition 25 (2) and (4)}}]\label{co:funct_anal} The following statements hold.
\begin{enumerate}[label={\upshape{(\roman*)}}]
\item\label{co:funct_anal_1} Let $M,K$ be compact, with $K$ Gruenhage, and let $T:C(M)\to C(K)$ be an isomorphic embedding. Then $M$ is also Gruenhage compact.
\item Let $X^*$ be a dual Banach space and let $K \subseteq X^*$ be Gruenhage compact in the $w^*$-topology. Then the $w^*$-closed symmetric convex hull $\cl{\sconv}^{w^*}(K)$ is also Gruenhage compact.
\end{enumerate}
\end{corollary}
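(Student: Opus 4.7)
The plan for (i) is to exhibit $M$ as a subspace of a space we already know is Gruenhage compact, via the canonical homeomorphic embedding $\delta_M : m \mapsto \delta_m$ of $M$ into $(B_{C(M)^*},w^*)$. Since $T$ is an isomorphic embedding, its adjoint $T^* : C(K)^* \to C(M)^*$ is a $w^*$-to-$w^*$ continuous linear surjection; by the open mapping theorem there exists $C > 0$ such that $T^*(CB_{C(K)^*}) \supseteq B_{C(M)^*}$, and in particular $T^*(CB_{C(K)^*}) \supseteq \delta_M(M)$. By \Cref{pr:gruenhage_ball}, $(B_{C(K)^*},w^*)$ and hence $(CB_{C(K)^*},w^*)$ is Gruenhage compact. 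The restriction of $T^*$ to $CB_{C(K)^*}$ is a continuous map from a compact Hausdorff space into a Hausdorff space, so it is automatically perfect onto its image; \Cref{th:gruenhage_stability} \ref{th:gruenhage_stability_3} then shows that $T^*(CB_{C(K)^*})$ is $w^*$-Gruenhage compact. Finally \Cref{th:gruenhage_stability} \ref{th:gruenhage_stability_1}, applied to the subspace $\delta_M(M)$, gives that $M$ is Gruenhage compact.

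For (ii) the aim is to present $\cl{\sconv}^{w^*}(K)$ as a perfect image of $(B_{C(K)^*},w^*)$, which is Gruenhage compact by \Cref{pr:gruenhage_ball}. I would introduce the Gelfand/barycentric map $\beta : C(K)^* \to X^*$ defined by
\[
\beta(\mu)(x) \;=\; \int_K \langle k, x\rangle \, \md\mu(k), \qquad x \in X, \ \mu \in C(K)^*.
\]
The integral is well-defined because $K$ is norm-bounded (by uniform boundedness applied to the $w^*$-compact set $K \subseteq X^*$) and each $x \in X$, viewed as the $w^*$-continuous function $k \mapsto \langle k, x\rangle$ on $K$, belongs to $C(K)$. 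The map $\beta$ is linear; for each fixed $x \in X$, $\mu \mapsto \beta(\mu)(x)$ is a $w^*$-continuous linear form on $C(K)^*$, so $\beta$ is $w^*$-to-$w^*$ continuous. Moreover $\beta(\delta_k) = k$ for every $k \in K$; combined with the Krein-Milman theorem, which identifies $B_{C(K)^*}$ with the $w^*$-closed convex hull of $\{\pm \delta_k : k \in K\}$, the continuity of $\beta$ yields $\beta(B_{C(K)^*}) = \cl{\sconv}^{w^*}(K)$. Applying \Cref{th:gruenhage_stability} \ref{th:gruenhage_stability_3} to this $w^*$-to-$w^*$ continuous surjection from a Gruenhage compact space concludes the proof.

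The main obstacle I anticipate is the equality $\beta(B_{C(K)^*}) = \cl{\sconv}^{w^*}(K)$ in (ii): the containment $\supseteq$ is almost immediate since $\beta(B_{C(K)^*})$ is a $w^*$-compact convex symmetric set containing $K$, but $\subseteq$ requires using Krein-Milman to approximate an arbitrary $\mu \in B_{C(K)^*}$ in the $w^*$-topology by convex combinations of $\pm \delta_k$'s and then pushing the approximation through $\beta$. For (i) the only delicate step is recalling that the adjoint of an isomorphic embedding is a quotient map—a standard consequence of the open mapping theorem applied to $T^*$, which underwrites the key inclusion $T^*(CB_{C(K)^*}) \supseteq B_{C(M)^*}$.
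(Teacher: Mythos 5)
Your proof is correct and follows essentially the same route as the paper: part (i) uses the quotient property of the adjoint $T^*$ together with \Cref{pr:gruenhage_ball} and \Cref{th:gruenhage_stability}, and in part (ii) your map $\beta$ is precisely the dual operator $T^*$ of the evaluation embedding $T:X\to C(K)$ that the paper employs. The only cosmetic difference is that in (ii) the paper gets by with the single containment $\cl{\sconv}^{w^*}(K)\subseteq T^*B_{C(K)^*}$ plus hereditariness of the Gruenhage property, whereas you prove the full equality via Krein--Milman, which is harmless but not needed.
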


\begin{proof}
 To prove (i), we consider the dual operator $T^*:C(K)^*\to C(M)^*$. Because $T$ is an isomorphic embedding, $T^*$ is an open map, so there exists $r>0$ such that $rB_{C(M)^*}\subseteq T^*B_{C(K)^*}$. By \Cref{pr:gruenhage_ball}, $B_{C(K)^*}$ is Gruenhage compact in the $w^*$-topology. As dual maps are $w^*$-$w^*$-continuous, $B_{C(M)^*}$ is therefore Gruenhage compact by \Cref{th:gruenhage_stability} \ref{th:gruenhage_stability_1} and \ref{th:gruenhage_stability_3}. Given that the map $x \mapsto \delta_x$ that sends $x \in M$ to its corresponding Dirac measure in $B_{C(M)^*}$ is a homeomorphic embedding (with respect to the $w^*$-topology), again by \Cref{th:gruenhage_stability} \ref{th:gruenhage_stability_1} we conclude that $M$ is Gruenhage.
 
 To prove (ii), recall the dual operator $T^*:C(K)^*\to X^*$ from the proof of \Cref{co:gruenhage_generated} (here, it won't necessarily have dense range). Given $f \in K$, it is easy to see that $T^*\delta_f = f$, where $\delta_f$ is the corresponding Dirac measure in $C(K)^*$. Given that $T^*B_{C(K)^*}$ is symmetric, convex and $w^*$-closed, we obtain $\cl{\sconv}^{w^*}(K) \subseteq T^*B_{C(K)^*}$ (moreover, equality holds via the Krein-Milman theorem). The rest of the proof follows similarly to above.
\end{proof}

In this section we focused on Gruenhage compact spaces and strictly convex dual renorming of $C(K)^*$ spaces. Gruenhage compact spaces also play a role in the  construction of equivalent strictly convex dual norms on $X^*$, where $X$ has a (generally uncountable) unconditional basis \cite{smith:troyanski:09}.

\section{Generalizations and descendents of Gruenhage spaces}\label{sect:descendents}

A couple of years before the appearance of \cite{gruenhage:87}, the concept of fragmentability, modelled on the idea of \emph{dentability} in Banach spaces, was introduced by Jayne and Rogers.

\begin{definition}[\cite{jayne:rogers:85}*{Definition 1}]\label{df:fragmentable}
 Let $X$ be a topological space and let $d$ be a metric on $X$. We say that $X$ is \emph{fragmented by $d$} if, given non-empty $E \subseteq X$ and $\ep>0$, there exists an open subset $U \subseteq X$ with the property that $E \cap U \neq \varnothing$ and the diameter $d$-$\diam{E \cap U}<\ep$. If $X$ is fragmented by some metric then we say that $X$ is \emph{fragmentable}.
\end{definition}

This concept has had an enduring impact on the theory of non-separable Banach spaces. We point out that the metric $d$ need not have any special relationship with the topology on $X$, though in practice there is often a natural choice of metric that is lower semicontinuous with respect to the topology, or generates a finer topology, or both. For example, if $X$ is a scattered topological space then it is fragmented by the discrete metric. If $X$ is a subset of a Banach space $Y$ equipped with the $w$-topology (or a dual space $Y^*$ equipped with the $w^*$-topology), and $d$ is the metric induced by the (dual) norm, then $d$ is both lower semicontinuous and finer than the given topology (though $X$ may or may not be fragmented by $d$).

The following is a very useful characterization of fragmentability expressed in terms of increasing well-ordered families of open sets, which dispenses with the need to involve a metric.

\begin{theorem}[\cite{ribarska:87}*{Theorem 1.9}]\label{th:fragmentability}
 A topological space is fragmentable if and only if there exists, for each $n \in \omega$, an increasing transfinite sequence $(U_{\xi,n})_{\xi \in \lambda_n}$ of open subsets of $X$, such that $\set{U_{\xi,n}}{n \in \omega,\, \xi \in \lambda_n}$ separates points. 
\end{theorem}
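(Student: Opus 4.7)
The plan is to handle the two directions separately, with both arguments hinging on transfinite constructions that mirror one another.

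For the forward direction, suppose $X$ is fragmented by a metric $d$. Fix $n \in \omega$ and build an increasing sequence $(U_{\xi,n})$ of open subsets of $X$ by transfinite recursion: set $U_{0,n} = \varnothing$; at a successor step, if $U_{\xi,n} \neq X$, apply fragmentability to the non-empty set $X\setminus U_{\xi,n}$ with precision $2^{-n}$ to obtain an open $V \subseteq X$ meeting $X\setminus U_{\xi,n}$ in a set of $d$-diameter less than $2^{-n}$, and put $U_{\xi+1,n} = U_{\xi,n} \cup V$; at limit stages take unions. Strict enlargement at successor steps forces the process to terminate at some ordinal $\lambda_n$ with $U_{\lambda_n,n} = X$. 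To see that the resulting doubly-indexed family separates points, given distinct $x,y$, choose $n$ with $2^{-n} < d(x,y)$ and let $\xi$ be the least ordinal with $\{x,y\}\cap U_{\xi,n}\neq\varnothing$. Minimality rules out $\xi$ being a limit, so $\xi = \eta+1$; and since the newly-adjoined portion $V\cap(X\setminus U_{\eta,n})$ has $d$-diameter less than $d(x,y)$, it cannot contain both $x$ and $y$, forcing $\{x,y\}\cap U_{\xi,n}$ to be a singleton.

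For the converse, given the families $(U_{\xi,n})$, define
\[
d(x,y) = \sum_{n \in \omega} 2^{-n}\chi_n(x,y),
\]
where $\chi_n(x,y) = 1$ if some $U_{\xi,n}$ contains exactly one of $x, y$, and $\chi_n(x,y) = 0$ otherwise. Symmetry and vanishing on the diagonal are immediate; the separation hypothesis yields $d(x,y) > 0$ whenever $x\neq y$; and the triangle inequality follows from the pointwise bound $\chi_n(x,z) \leq \chi_n(x,y) + \chi_n(y,z)$, verified by a one-line case analysis on which side of a given separating $U_{\xi,n}$ the point $y$ lies.

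To verify fragmentation, let $E\subseteq X$ be non-empty and $\ep > 0$, and pick $N$ with $2^{-N} < \ep$. Iteratively shrink $E$ by intersecting with $U_{\xi_n,n}$ for $n = 0,1,\ldots,N$, where each $\xi_n$ is chosen minimally so the current intersection remains non-empty. Minimality forces each $\xi_n$ to be a successor $\eta_n+1$, so any two points in the final set $E' = E \cap \bigcap_{n \leq N} U_{\xi_n,n}$ lie in $U_{\xi_n,n}$ but outside $U_{\eta_n,n}$ for each $n \leq N$. Hence no $U_{\xi,n}$ with $n \leq N$ separates them, giving $d$-$\diam{E'} \leq \sum_{n > N} 2^{-n} = 2^{-N} < \ep$; and the open set $U = \bigcap_{n \leq N} U_{\xi_n,n}$ satisfies $E \cap U = E'$. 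The arguments are largely bookkeeping once the recursions are set up; the main mild obstacle is ensuring at each use of minimality that the selected ordinal really is a successor, which follows in each case because the open sets are defined as unions at limit stages.
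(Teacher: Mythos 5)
The paper does not prove this theorem; it is quoted from Ribarska's paper, so there is no internal proof to compare against. Your argument is essentially the standard one (construct the increasing families by transfinitely exhausting $X$ with small relatively open pieces in one direction; in the other, sum up indicator-type separations to build a fragmenting metric), and it is sound in outline. Two points in the converse direction need attention, though both are easily repaired. First, your iterative shrinking step ``choose $\xi_n$ minimal so the current intersection remains non-empty'' is not well defined when the current set meets \emph{no} $U_{\xi,n}$ for that $n$ --- nothing in the hypothesis says each family $\set{U_{\xi,n}}{\xi\in\lambda_n}$ covers $X$. You must allow yourself to skip such $n$; this is harmless, since then no $U_{\xi,n}$ contains any point of the final set $E'$ and so contributes nothing to $d$ on $E'\times E'$, but it should be said. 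Second, your claim that minimality forces each $\xi_n$ to be a successor is justified by ``the open sets are defined as unions at limit stages,'' which is true only of the sequences \emph{you} built in the forward direction; in the converse direction the sequences are given and need not be continuous at limits, so $\xi_n$ may well be a limit ordinal. Fortunately the successor structure is never actually needed: if $\xi_n$ is minimal with $E''\cap U_{\xi_n,n}\neq\varnothing$, then monotonicity alone gives that every $U_{\zeta,n}$ with $\zeta<\xi_n$ misses $E''\cap U_{\xi_n,n}$ entirely and every $U_{\zeta,n}$ with $\zeta\geq\xi_n$ contains it entirely, so no set at level $n$ separates two points of the final set. With the successor claim deleted and the skipped-$n$ case added, the proof is correct. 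The forward direction is fine as written, including the termination and separation arguments.
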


We present four applications of this characterization.

\begin{theorem}[\cite{ribarska:87}*{Corollaries 1.11 and 2.27} and \cite{fabian:97}*{Theorem 5.1.12 (v)}]\label{th:fragmentable}
 Let $K$ be a compact fragmentable space. Then
 \begin{enumerate}[label={\upshape{(\roman*)}}]
  \item\label{th:fragmentability_complete} there exists a complete metric $d$ on $K$ that fragments $K$ and is finer then the original topology;
  \item\label{th:fragmentability_dense_Gdelta} $K$ admits a dense $G_\delta$ completely metrizable subspace;
  \item $K$ is sequentially compact.
 \end{enumerate}
\end{theorem}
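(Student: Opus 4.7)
The plan is to establish (i) first using Ribarska's characterization (\Cref{th:fragmentability}), then to deduce (ii) and (iii) from (i) by standard compactness arguments.

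For (i), starting from the transfinite increasing sequences $(U_{\xi,n})_{\xi \in \lambda_n}$ of open sets whose union separates points, I would construct, for each $n \in \omega$, a bounded pseudometric $d_n$ on $K$ with the property that $d_n(x,y)$ is small precisely when $x$ and $y$ lie on the same side of $U_{\xi,n}$ for all small $\xi$. The combined metric $d(x,y) = \sum_{n \in \omega} 2^{-n} \min(d_n(x,y),1)$ then separates points because $\bigcup_n \{U_{\xi,n}\}_\xi$ separates them, and each $U_{\xi,n}$ is $d$-open, so the $d$-topology refines $\tau$. That $d$ fragments $K$ follows because, in any non-empty closed $E \subseteq K$, one can locate by minimality the first $\xi$ at which $E \cap U_{\xi,n}$ is non-empty and proper in $E$; this produces a relatively $\tau$-open piece of small $d_n$-diameter, and varying $n$ shrinks the $d$-diameter arbitrarily. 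Completeness of $d$ comes for free from compactness: a $d$-Cauchy sequence has a $\tau$-cluster point, which must coincide with the $d$-limit because $d$ refines $\tau$.

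For (ii), let $G$ denote the set of $x \in K$ at which the identity $(K,\tau)\to(K,d)$ is continuous, i.e.\ those $x$ admitting $\tau$-neighborhoods of arbitrarily small $d$-diameter. Then $G = \bigcap_{k \in \omega} \bigcup\set{V}{V \text{ is } \tau\text{-open with } d\text{-}\diam{V} < 2^{-k}}$ is a $G_\delta$ subset of $K$. Density follows from a standard iteration: for any non-empty $\tau$-open $V_0$, use fragmentation of $\cl{V_0}$ by $d$ to produce a nested sequence $\cl{V_k} \subseteq V_{k-1}$ of non-empty $\tau$-open sets with $d$-$\diam{V_k} < 2^{-k}$, and take a point in the compact intersection $\bigcap_k \cl{V_k} \subseteq V_0 \cap G$. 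On $G$ the two topologies coincide, so $G$ is metrized by the restriction of $d$; being $G_\delta$ in the complete metric space $(K,d)$, $G$ is completely metrizable by Alexandrov's theorem.

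For (iii), given a sequence $(x_k)\subset K$, apply the fragmentation iteratively to $\cl{\{x_k : k \in \omega\}}$ and its successive refinements to extract a subsequence that is $d$-Cauchy; completeness from (i) yields a $d$-limit, which is also a $\tau$-limit since $d$ refines $\tau$. The main obstacle is the construction in (i): ordinals below $\lambda_n$ cannot all be injected into $(0,1)$ when $\lambda_n$ is uncountable, so one cannot directly assign a real-valued rank. The resolution is to define $d_n(x,y)$ via the fragmenting \emph{behaviour} of the chain $(U_{\xi,n})_\xi$ on the pair $\{x,y\}$ rather than via the ordinal $\xi$ itself, and to rely on the weighted series over $n$ to upgrade the resulting pseudometrics to a genuine complete metric compatible with the required properties.
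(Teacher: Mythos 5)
The paper does not prove this theorem; it imports it from Ribarska's paper and Fabian's book, so the comparison below is with the standard arguments found there. Your overall architecture (manufacture a metric from the chains in \Cref{th:fragmentability}, then deduce (ii) and (iii)) is the standard route, and your derivation of (ii) from (i) is essentially sound. But part (i), as written, has genuine gaps. First, the construction you leave unresolved has a clean standard answer that makes the ``ordinals cannot be injected into $(0,1)$'' worry moot: take $d_n$ to be the $\{0,1\}$-valued pseudometric of the partition of $K$ into the successive differences $U_{\xi,n}\setminus\bigcup_{\eta<\xi}U_{\eta,n}$ (together with the residual set $K\setminus\bigcup_\xi U_{\xi,n}$); your first-ordinal fragmentation argument then works verbatim. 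Second, and more seriously, showing that each $U_{\xi,n}$ is $d$-open does not show that $d$ is finer than $\tau$: the chains are only assumed to separate points, not to form a subbase for $\tau$, so a general $\tau$-open set need not be $d$-open. Arranging the fragmenting metric to majorize the topology is a separate, nontrivial modification of the separating family --- it is precisely the content of Ribarska's Corollary 1.11, not a byproduct of the naive construction. Third, completeness is \emph{not} free from compactness. If $d$ is finer than $\tau$, a $d$-Cauchy sequence with a $\tau$-cluster point $x$ need not $d$-converge to $x$ (that inference would require something like $\tau$-lower semicontinuity of $d$, which the partition pseudometrics do not have). Concretely, on $K=\{0\}\cup\set{1/n}{n\geq 1}$ with its usual compact topology, the metric agreeing with $|\cdot|$ on $\set{1/n}{n\geq 1}$ but with $d(1/n,0)=1+1/n$ is finer than $\tau$ and fragments $K$, yet $(1/n)$ is $d$-Cauchy with no $d$-limit. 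Completeness has to be engineered into the construction.

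Your sketch of (iii) also fails at the first step: fragmenting $\cl{\set{x_k}{k\in\omega}}$ yields a relatively open piece of small $d$-diameter, but that piece may contain no terms of the sequence at all (it can consist of a single cluster point), so you cannot pass to a subsequence supported in it. The correct iteration fragments the set $C$ of $\tau$-cluster points of the current subsequence: if an open $U$ meets $C$, then $U$ automatically contains infinitely many terms; using regularity to shrink to $V$ with $\cl{V}\subseteq U$ keeps the cluster set of the new subsequence inside $U$, and the resulting decreasing sequence of nonempty compact cluster sets with $d$-diameters tending to $0$ intersects in a single point, which is then the unique cluster point --- hence the $\tau$-limit --- of a diagonal subsequence. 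Note that this argument needs neither completeness of $d$ nor that $d$ refines $\tau$, so (iii) does not actually depend on the hardest parts of (i).
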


\begin{theorem}[\cite{ribarska:87}*{Proposition 2.2}]\label{th:Gruenhage_fragmentable}
Every Gruenhage space is fragmentable.
\end{theorem}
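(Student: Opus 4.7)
The plan is to verify Ribarska's characterization of fragmentability (\Cref{th:fragmentability}) directly from the structure provided by \Cref{pr:gruenhage_equiv} \ref{pr:gruenhage_equiv_3}. Concretely, using the open sets $R_n$ and families $\mathscr{U}_n$ afforded by that formulation, I would construct, for each $n\in\omega$, an increasing transfinite sequence of open sets, together with some auxiliary one-term sequences, so that the combined collection separates points of $X$.

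First I would well-order $\mathscr{U}_n$ as $\{U_{\xi,n}\}_{\xi<\lambda_n}$ for each $n\in\omega$, and define the increasing sequence
\[
V_{\xi,n}=\bigcup_{\eta\leq\xi}U_{\eta,n},\qquad \xi<\lambda_n,
\]
of open subsets of $X$. In parallel, for each $n\in\omega$ I would include the trivial one-term sequence consisting of $R_n$ alone. After a harmless relabelling over $\omega$, these fit the format required by \Cref{th:fragmentability}, and so the only thing that remains is to check $T_0$-separation.

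To verify separation, fix distinct $x,y\in X$ and use \Cref{pr:gruenhage_equiv} \ref{pr:gruenhage_equiv_3} to pick $n\in\omega$ and $U\in\mathscr{U}_n$ with, say, $x\in U$ and $y\notin U$. Since $R_n\subseteq U$, we have $y\notin R_n$, and since distinct members of $\mathscr{U}_n$ meet exactly in $R_n$, the point $y$ lies in at most one element of $\mathscr{U}_n$. If $y$ lies in none, then no $V_{\xi,n}$ contains $y$, while the first $V_{\xi,n}$ that catches $x$ separates. If $y$ lies uniquely in some $U_{\eta,n}$, one splits further: when $x\in R_n$, the auxiliary one-term sequence $(R_n)$ separates since $y\notin R_n$; when $x\notin R_n$, the point $x$ lies in a unique $U_{\xi_x,n}$ with $\xi_x\neq\eta$, and depending on whether $\xi_x<\eta$ or $\xi_x>\eta$, either $V_{\xi_x,n}$ contains $x$ but not $y$, or $V_{\eta,n}$ contains $y$ but not $x$.

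The only genuine subtlety is the corner case $x\in R_n$ coupled with $y$ lying uniquely in the very first listed set $U_{0,n}$: then every $V_{\xi,n}$ contains both points, which is precisely what forces the auxiliary one-term sequence $(R_n)$ into the construction. Apart from this accounting, the argument is a straightforward case analysis inside a well-ordering, and is essentially the only obstacle.
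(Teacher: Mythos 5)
Your proof is correct, and it is essentially the paper's argument with the intermediate step inlined: the paper factors the result as Gruenhage $\Rightarrow$ {\st} (\Cref{pr:Gruenhage_implies_star}, where the added singleton families $\{R_n\}$ play exactly the role of your auxiliary one-term sequences) followed by {\st} $\Rightarrow$ fragmentable (\Cref{pr:star_implies_fragmentable}, via the same well-ordering and increasing initial unions checked against \Cref{th:fragmentability}). Your case analysis, including the corner case with $x\in R_n$ and $y$ in the first listed set, is sound.
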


Thus we obtain Theorem \ref{th:gruenhage} as an immediate corollary. Given \Cref{pr:wone} and the fact that every scattered compact space is fragmentable, we get an indication of the degree to which \Cref{th:fragmentable} \ref{th:fragmentability_dense_Gdelta} generalizes \Cref{th:gruenhage}.

We will provide a proof of \Cref{th:Gruenhage_fragmentable}, though we will do so indirectly, via an intermediate class of spaces that was identified a little more recently. For some time, this author believed that the converse of \Cref{th:Gruenhage_strictly_convex} (minus the property of being a lattice norm) was true, which would have meant that the class of compact spaces $K$ for which $C(K)^*$ admits an equivalent strictly convex dual norm coincided with the class of Gruenhage compact spaces. This turned out to be false, as we shall see. 

\begin{definition}[{\cite{orihuela:smith:troyanski:12}*{Definition 2.6}}]\label{df:star} A topological space $X$ is said to have property {\st} if there exist families $\mathscr{U}_n$, $n \in \omega$, of open subsets of $X$, such that given $x,y \in X$, there exists $n \in \omega$ satisfying
\begin{enumerate}[label={\upshape{(\roman*)}}]
\item\label{df:star_1} $\{x,y\} \cap \bigcup\mathscr{U}_n$ is non-empty;
\item\label{df:star_2} $\{x,y\} \cap U$ is at most a singleton for all $U \in \mathscr{U}_n$.
\end{enumerate}
\end{definition}

This definition is motivated by both Gruenhage's property and the following straightforward result.

\begin{proposition}[{cf.~\cite{orihuela:smith:troyanski:12}*{Theorem 2.7 (i) $\Rightarrow$ (ii)}}]\label{pr:strictly_convex_star}
 Let the dual Banach space $X^*$ admit an equivalent dual strictly convex norm $\tndot$. Then $(X^*,w^*)$ has {\st}.
\end{proposition}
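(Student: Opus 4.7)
The plan is to build the families $\mathscr{U}_n$ directly from $w^*$-open half-spaces read off from the predual norm (also denoted $\tndot$) of the given strictly convex dual norm. For each $q \in \Q$ with $q > 0$, I would set
\[
 \mathscr{U}_q = \set{H_{x,q}}{x \in X,\,\tn{x} \leq 1}, \quad\text{where } H_{x,q}=\set{f \in X^*}{f(x)>q},
\]
and enumerate the countable collection $\{\mathscr{U}_q : q \in \Q \cap (0,\infty)\}$ as $\mathscr{U}_0,\mathscr{U}_1,\ldots$. Each $H_{x,q}$ is manifestly $w^*$-open, so this is a legal candidate family for property {\st}.

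To verify (i) and (ii), I would fix distinct $f,g \in X^*$. If $\tn{f}\neq \tn{g}$, say $\tn{g}<\tn{f}$, pick a rational $q$ strictly between them. The duality formula $\tn{f}=\sup\set{f(x)}{\tn{x}\leq 1}$ produces some admissible $x$ with $f(x)>q$, so $f \in H_{x,q}\in\mathscr{U}_q$, which gives (i). On the other hand, $g \in H_{x,q}$ for any admissible $x$ would force $\tn{g}\geq g(x)>q$, a contradiction, so no member of $\mathscr{U}_q$ meets $\{f,g\}$ in more than one point, yielding (ii).

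The interesting case is $\tn{f}=\tn{g}=:r$, which is where strict convexity enters. Since $f\neq g$, strict convexity of $\tndot$ together with the triangle inequality forces $\tn{\frac{1}{2}(f+g)}<r$: otherwise the midpoint would attain the common value $r$, forcing $f=g$ by \Cref{df:strictly_convex}. I would then pick a rational $q$ with $\tn{\frac{1}{2}(f+g)}<q<r$. The same Hahn--Banach argument as above supplies some $H_{x,q}\in\mathscr{U}_q$ containing $f$, so (i) holds; and if some admissible $x$ satisfied both $f(x)>q$ and $g(x)>q$, we would get $\tn{\frac{1}{2}(f+g)}\geq\frac{1}{2}(f+g)(x)>q$, contradicting the choice of $q$. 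This gives (ii).

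Since the estimate $\tn{\frac{1}{2}(f+g)}<\max\{\tn{f},\tn{g}\}$ is an immediate consequence of \Cref{df:strictly_convex}, and the duality formula is standard, there is no real obstacle here; the whole argument is a clean translation of strict convexity of the dual norm into the separation condition demanded by property {\st}. The only small point worth noting is that the case $r=0$ cannot occur (it would force $f=g=0$), which is why a positive rational $q$ is always available in the equal-norm case.
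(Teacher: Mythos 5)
Your proof is correct and follows essentially the same route as the paper's: the same families of $w^*$-open half-spaces $\set{f \in X^*}{f(x)>q}$ indexed by positive rationals, the same case split on whether $\tn{f}=\tn{g}$, and the same use of strict convexity to push the midpoint norm below a rational threshold. The only cosmetic difference is that you index over the closed unit ball of $X$ rather than its unit sphere.
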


\begin{proof}
Let $\tndot$ denote the predual norm on $X$ also. Let $S=S_{(X,\tndot)}$ denote the unit sphere of $X$ with respect to $\tndot$. Given $x \in S$ and $q \in \Q \cap (0,\infty)$, define the $w^*$-open set
\[
 U_{x,q} = \set{f \in X^*}{f(x) > q},
\]
and then let $\mathscr{U}_q = \set{U_{x,q}}{x \in S}$, $q \in \Q \cap (0,\infty)$. We claim that these families satisfy \Cref{df:star}. Let $f,g \in X^*$ be distinct. Suppose first that $\tn{f}=\tn{g}$. By strict convexity, there exists $q\in\Q$ satisfying $\tn{\frac{1}{2}(f+g)} < q < \tn{f}$. Then $f,g \in \bigcup\mathscr{U}_q$ by the definition of dual norm. However, it is impossible to have $f,g \in U_{x,q}$ for some $x \in S$, because that would yield $q < \frac{1}{2}(f(x)+g(x)) \leq \tn{\frac{1}{2}(f+g)}$. Now suppose $\tn{f} \neq \tn{g}$. Then, without loss of generality, assume $\tn{f} < \tn{g}$. If we pick $q\in\Q$ strictly between $\tn{f}$ and $\tn{g}$, then we observe that $\{f,g\} \cap \bigcup\mathscr{U}_q = \{g\}$, again by the definition of dual norm. 
\end{proof}

We remark on a very special property of the sets $U_{x,q}$ that make up the families $\mathscr{U}_q$ in the proof above. These are not only $w^*$-open sets but moreover $w^*$-open \emph{half spaces} of $X^*$. Given a subset $C$ of a dual Banach space $X^*$, we shall call a subset of $C$ a $w^*$-open \emph{slice} if it is the intersection of $C$ with a $w^*$-open half space of $X^*$. The set $C$ is said to have \emph{{\st} with slices} if there exist families $\mathscr{U}_n$, $n \in \omega$, that satisfy \Cref{df:star} and moreover consist entirely of $w^*$-open slices of $C$. It turns out it is much easier to build strictly convex norms if one has families that consist entirely of slices. Essentially, this is because, if $C$ is convex and $H \subseteq C$ is a slice, then $C\setminus H$ is also convex. See e.g.~\cite{orihuela:smith:troyanski:12}*{Sections 2 and 3} for more details about this.

Evidently, the notion of {\st} with slices requires some additional information about the underlying Banach space. We will return to this idea a little later on, but for now we shall examine {\st} as a purely topological property. First, it should be clear that \Cref{th:gruenhage_stability} \ref{th:gruenhage_stability_1} and \ref{th:gruenhage_stability_2} apply equally to spaces having {\st} (though whether the analogue of \Cref{th:gruenhage_stability} \ref{th:gruenhage_stability_3} also holds is an open problem -- see below). Next, we will use {\st} to provide an indirect proof of \Cref{th:Gruenhage_fragmentable} and find out some more things about the property along the way.

\begin{proposition}[\cite{orihuela:smith:troyanski:12}*{Proposition 4.1}]\label{pr:Gruenhage_implies_star}
If $X$ is Gruenhage then it has {\st}.
\end{proposition}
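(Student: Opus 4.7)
The plan is to work from the equivalent form of the Gruenhage property given in \Cref{pr:gruenhage_equiv} \ref{pr:gruenhage_equiv_3}: choose open sets $R_n$ and families $\mathscr{U}_n$ of open subsets of $X$, $n \in \omega$, such that $R_n \subseteq U$ for every $U \in \mathscr{U}_n$, $U \cap V = R_n$ whenever $U, V \in \mathscr{U}_n$ are distinct, and $\bigcup_{n \in \omega}\mathscr{U}_n$ separates points. As the countable collection of open families meant to witness \Cref{df:star}, I propose the $\mathscr{U}_n$ themselves together with the singleton families $\{R_n\}$, $n \in \omega$.

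To verify \st{}, fix distinct $x, y \in X$. Separation supplies some $n \in \omega$ and $U \in \mathscr{U}_n$ with $\{x, y\} \cap U$ a singleton; by symmetry I may assume $x \in U$ and $y \notin U$. Since $R_n \subseteq U$, this forces $y \notin R_n$. The decisive consequence is that $y$ lies in at most one member of $\mathscr{U}_n$: two distinct members would intersect in $R_n$, contradicting $y \notin R_n$.

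The verification now splits according to whether $x \in R_n$. If $x \notin R_n$, the same reasoning applied to $x$ forces $x$ to lie in a unique member of $\mathscr{U}_n$, namely $U$; hence no member of $\mathscr{U}_n$ contains both $x$ and $y$, while $U$ covers $x$, and $\mathscr{U}_n$ itself satisfies the two defining conditions of \Cref{df:star} for the pair $\{x, y\}$. If instead $x \in R_n$, then $x$ belongs to every element of $\mathscr{U}_n$, in particular to the (possibly existing) member containing $y$, so $\mathscr{U}_n$ is inadequate; however, the singleton family $\{R_n\}$ works, because $R_n$ is open, contains $x$, and avoids $y$.

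The only point requiring care is the asymmetry induced by $R_n$: a point of $R_n$ sits inside every member of $\mathscr{U}_n$, which is exactly why $\mathscr{U}_n$ alone fails to witness \st{} whenever one of $x, y$ lies in $R_n$. Adjoining the one-element families $\{R_n\}$ plugs precisely this gap, and I do not anticipate further difficulties.
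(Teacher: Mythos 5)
Your proof is correct and follows essentially the same route as the paper: both augment the witnessing families from \Cref{pr:gruenhage_equiv}~\ref{pr:gruenhage_equiv_3} with the singleton families $\{R_n\}$ and then split into cases according to whether one of the two points lies in $R_n$. No gaps.
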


\begin{proof}
Given a Gruenhage space $X$, take open sets $R_n$ and families $\mathscr{U}_n$, $n \in \omega$, satisfying \Cref{pr:gruenhage_equiv} \ref{pr:gruenhage_equiv_3}. Assume further, by adding new families if necessary, that given $n \in \omega$, there exists $m \in \omega$ such that $\mathscr{U}_m=\{R_n\}$. We show that the families $\mathscr{U}_n$, $n\in\omega$, satisfy \Cref{df:star}. Given distinct $x,y \in X$, there exist $n \in \omega$ and $U \in \mathscr{U}_n$, such that $\{x,y\} \cap U$ is a singleton. Let $m \in \omega$ such that $\mathscr{U}_m=\{R_n\}$. If $x \in R_n$ then $y \notin R_n$ (else $x,y \in R_n \subseteq U$), thus $\{x,y\} \cap V = \{x\}$ for every $V \in \{R_n\} = \mathscr{U}_m$; likewise if $y \in R_n$. So we assume now that $x,y \notin R_n$. Then it is true that $\{x,y\} \cap V$ is at most a singleton for every $V \in \mathscr{U}_n$, since if $y \in V$ then $V \neq U$, and $x \in V$ would imply $x \in U \cap V = R_n$.
\end{proof}

Recall that a space $X$ is said to have a \emph{$G_\delta$-diagonal} if its diagonal $\set{(x,x)}{x \in X}$ is a $G_\delta$ set in $X^2$. It is straightforward to show that $X$ has a $G_\delta$-diagonal if and only if there are families $\mathscr{U}_n$, $n \in \omega$, of open covers of $X$, such that given $x,y \in X$, there exists $n \in \omega$ with the property that $\{x,y\} \cap U$ is at most a singleton for all $U \in \mathscr{U}_n$ \cite{gruenhage:84}*{Theorem 2.2}. 

Thus it follows that every space having a $G_\delta$-diagonal also has {\st}. Moreover, if $X$ has a $G_\delta$-diagonal and $X \cup \{\infty\}$ is a topological space with respect to which $X$ is open (for instance, the 1-point compactification of a locally compact space), then $X \cup \{\infty\}$ has {\st}. According to a result of \v Sne\v\i der, compact spaces having $G_\delta$-diagonals are metrizable \cite{gruenhage:84}*{Theorem 2.13}, so by \Cref{pr:Gruenhage_implies_star} it is evident that {\st} is a strict generalization of the property of having a $G_\delta$-diagonal.

\begin{proposition}[\cite{orihuela:smith:troyanski:12}*{Proposition 4.2}]\label{pr:star_implies_fragmentable}
If $X$ has {\st} then it is fragmentable.
\end{proposition}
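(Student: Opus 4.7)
The plan is to apply Ribarska's characterization of fragmentability (\Cref{th:fragmentability}), thereby bypassing the need to exhibit a fragmenting metric directly. Starting from families $\mathscr{U}_n$, $n \in \omega$, that witness property {\st}, I would well-order each family as $\mathscr{U}_n = \set{U_{\xi,n}}{\xi \in \lambda_n}$ for some ordinal $\lambda_n$, and replace it by its ``staircase'' of cumulative unions
\[
V_{\xi,n} \;=\; \bigcup_{\eta \leq \xi} U_{\eta,n}, \qquad \xi \in \lambda_n,\; n \in \omega.
\]
Each $V_{\xi,n}$ is open, and for every fixed $n$ the sequence $(V_{\xi,n})_{\xi \in \lambda_n}$ is transfinitely increasing by construction. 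It then only remains to verify that the family $\set{V_{\xi,n}}{n \in \omega,\, \xi \in \lambda_n}$ separates points of $X$, after which \Cref{th:fragmentability} immediately delivers fragmentability.

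For the separation check, I would take distinct $x, y \in X$ and apply property {\st} to find $n \in \omega$ for which at least one of $x, y$ lies in $\bigcup \mathscr{U}_n$ and no single member of $\mathscr{U}_n$ contains both. Without loss of generality assume $x \in \bigcup\mathscr{U}_n$, and let $\xi$ be the least ordinal with $x \in U_{\xi,n}$, so that $x \in V_{\xi,n}$. If $y \notin \bigcup\mathscr{U}_n$, then trivially $y \notin V_{\xi,n}$, so $V_{\xi,n}$ separates $x$ from $y$. Otherwise, let $\eta$ be the least ordinal with $y \in U_{\eta,n}$; condition (ii) of \Cref{df:star} rules out $\xi = \eta$ (which would place both points in $U_{\xi,n}$), so $\xi \neq \eta$, and the minimality of whichever of $\xi, \eta$ is smaller shows that either $V_{\xi,n}$ or $V_{\eta,n}$ contains precisely one of $x, y$.

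I do not expect a substantive obstacle here. The essential point is the observation that a well-ordered family of open sets separates points in exactly the same way as its sequence of cumulative unions, provided one reads the separation in terms of minimal ``entry times''; condition (ii) of \Cref{df:star} is precisely what prevents two distinct points from having the same entry time. All of the real work has already been absorbed into \Cref{th:fragmentability}, which manufactures a fragmenting metric from precisely this kind of increasing, separating transfinite data.
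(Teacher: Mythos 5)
Your proof is correct and takes essentially the same route as the paper's: both well-order each $\mathscr{U}_n$, pass to the transfinitely increasing family of cumulative unions, and invoke Ribarska's characterization (\Cref{th:fragmentability}). The only cosmetic difference is that the paper selects the least $\alpha$ for which $\{x,y\}\cap U^n_\alpha$ is a singleton (noting all earlier intersections are empty by condition \ref{df:star_2}), whereas you compare the individual least ``entry times'' of $x$ and $y$; the two bookkeeping schemes are interchangeable.
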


\begin{proof}
Let $X$ admit families $\mathscr{U}_n$, $n \in \omega$, that satisfy Definition \ref{df:star}. We well order each $\mathscr{U}_n$ as $(U^n_\xi)_{\xi \in \lambda_n}$. Now define $V^n_\alpha = \bigcup_{\xi \in \alpha+1} U^n_\xi$, for $\alpha \in \lambda_n$. We claim that, given distinct $x,y \in X$, there exist $n \in \omega$ and $\alpha \in \lambda_n$ such that $\{x,y\} \cap V^n_\alpha$ is a singleton. This shows that $X$ is fragmentable by Theorem \ref{th:fragmentability}. Indeed, take $n \in \omega$ satisfying \Cref{df:star} \ref{df:star_1} and \ref{df:star_2}. Then pick the least $\alpha \in \lambda_n$ such that $\{x,y\} \cap U^n_\alpha$ is a singleton. Thus $\{x,y\} \cap U^n_\xi$ must be empty for all $\xi \in
\alpha$, meaning that $\{x,y\} \cap V^n_\alpha = \{x,y\} \cap U^n_\alpha$ is a singleton.
\end{proof}

As Gruenhage spaces have {\st}, they inherit the properties spelled out in the next result. \Cref{th:star_properties} \ref{th:star_properties_1} generalizes a result of Chaber which states that countably compact spaces having $G_\delta$-diagonals are compact; \Cref{th:star_properties} \ref{th:star_properties_2} generalizes \cite{oncina:raja:04}*{Corollary 4.3}.

\begin{theorem}[\cite{orihuela:smith:troyanski:12}*{Theorem 4.3 and Corollary 4.4}]\label{th:star_properties}
The following statements hold.
\begin{enumerate}[label={\upshape{(\roman*)}}]
\item\label{th:star_properties_1} If $X$ is countably compact and has {\st}, then $X$ is compact.
\item\label{th:star_properties_2} If $L$ is locally compact and has {\st}, then $L \cup \{\infty\}$ is countably tight and sequentially closed subsets of $L \cup \{\infty\}$ are closed.
\end{enumerate}
\end{theorem}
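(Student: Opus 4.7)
Both parts will lean on \Cref{pr:star_implies_fragmentable}, which reduces \st{} to fragmentability, and on Ribarska's \Cref{th:fragmentability}, which supplies for each $n \in \omega$ an increasing transfinite sequence $(V^n_\alpha)_{\alpha < \lambda_n}$ of open sets whose union separates the points of the ambient space. For \textbf{part (i)}, I would assume for contradiction that $X$ is not compact, so that some open cover $\mathscr{W}$ of $X$ admits no finite subcover. Imitating the transfinite recursion in the proof of \Cref{df:descriptive_scattered}, I would construct a strictly descending sequence $(F_\xi)$ of non-empty closed subsets of $X$ each failing to admit a finite subcover from $\mathscr{W}$, by setting $F_0 = X$ and, at each successor stage, choosing a pair $(n_\xi, \alpha_\xi)$ such that $V^{n_\xi}_{\alpha_\xi}$ meets $F_\xi$ in a proper non-empty subset and retaining whichever of $F_\xi \cap V^{n_\xi}_{\alpha_\xi}$ or $F_\xi \setminus V^{n_\xi}_{\alpha_\xi}$ still fails to be finitely covered; at limit stages, one would take intersections. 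Countable compactness ensures that any countable descending chain of non-empty closed sets has non-empty intersection, so limit stages never collapse prematurely, while the recursion is bounded since the pairs $(n_\xi, \alpha_\xi)$ are drawn from a fixed index set. At termination, $F_\xi$ collapses to a singleton covered by a single element of $\mathscr{W}$, yielding the desired contradiction.

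For \textbf{part (ii)}, I would first observe that \st{} passes from $L$ to its one-point compactification: augmenting a witnessing sequence $(\mathscr{U}_n)$ for $L$ with the singleton family $\{L\}$ produces a witnessing sequence for $L \cup \{\infty\}$, because $L$ is open in $L \cup \{\infty\}$ and meets every pair $\{x, \infty\}$ with $x \in L$ in the singleton $\{x\}$. Hence $L \cup \{\infty\}$ is a compact \st{} space, so by \Cref{pr:star_implies_fragmentable} it is fragmentable, and \Cref{th:fragmentable}\,(iii) then delivers sequential compactness. For countable tightness, given $x \in \overline{A} \setminus A$ in $L \cup \{\infty\}$, I would use the witnessing families to associate to each $y \in A$ a distinguishing set $U(y)$ in some $\mathscr{U}_n$ that meets $\{x,y\}$ in a singleton, and then argue, via a Ribarska-style well-ordering within each $\mathscr{U}_n$, that only countably many such $U(y)$'s arise near $x$, yielding a countable $B \subseteq A$ with $x \in \overline{B}$.

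For the sequential property, let $F \subseteq L \cup \{\infty\}$ be sequentially closed with $x \in \overline{F} \setminus F$; countable tightness furnishes a countable $B \subseteq F$ with $x \in \overline{B}$, and sequential compactness supplies a convergent subsequence of an enumeration of $B$. To force its limit to be $x$, I would refine the extraction inductively, at each step using \st{} to pick terms in specified members of $\bigcup_n \mathscr{U}_n$ that witness proximity to $x$. The resulting sequence lies in $F$ and converges to $x$, contradicting sequential closure. The \textbf{main obstacle} is precisely this last step: sequential compactness alone delivers only \emph{some} cluster point, and \st{} offers weaker separation than a $G_\delta$-diagonal or the Gruenhage property --- a member of $\mathscr{U}_n$ may contain $x$ without being uniquely pinned to $x$, so the case $x \notin \bigcup_n \bigcup \mathscr{U}_n$ (in which \st{} provides separating sets around $A$-points but not around $x$ itself) requires especially delicate handling to corral the cluster point back to $x$.
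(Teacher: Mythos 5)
The survey does not actually prove this theorem; it is quoted from \cite{orihuela:smith:troyanski:12}, so your proposal can only be judged on its own merits, and unfortunately both parts have genuine gaps. The decisive objection to your part (i) is that your argument uses nothing about {\st} beyond the fragmentability it implies: you pass immediately to Ribarska's increasing families $(V^n_\alpha)$ via \Cref{pr:star_implies_fragmentable} and \Cref{th:fragmentability}, and everything afterwards refers only to those families. But ``countably compact $+$ fragmentable $\Rightarrow$ compact'' is false: $\wone$ is scattered, hence fragmentable, and countably compact but not compact (indeed the paper uses exactly this to show that $\wone$ fails {\st}). So no argument of the shape you describe can succeed. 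Concretely, the recursion breaks in three places: $F_\xi \cap V^{n_\xi}_{\alpha_\xi}$ is not closed, and its closure need not be a proper subset of $F_\xi$, so strict descent fails on that branch; at limit stages of uncountable cofinality, countable compactness gives no control (in $\wone$ with the family $\{[0,\alpha+1)\}_{\alpha<\wone}$ your recursion produces the tails $[\gamma_\xi,\wone)$, whose intersection is empty); and ``the pairs are drawn from a fixed index set'' is not a termination argument. A correct proof must exploit the full strength of \Cref{df:star}~\ref{df:star_2} --- that at the separating level $n$ \emph{no} member of $\mathscr{U}_n$ contains both points --- and this is precisely the information destroyed by replacing $\mathscr{U}_n$ with Ribarska's initial unions.

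In part (ii), your observation that {\st} passes to $L\cup\{\infty\}$ by adjoining the family $\{L\}$ is correct, as is sequential compactness of $L\cup\{\infty\}$ via fragmentability. But the remainder is incomplete, as you concede: the ``Ribarska-style well-ordering'' sketch for countable tightness cannot work as stated, since countable tightness also fails for general compact fragmentable spaces ($\wone+1$ is compact, scattered, hence fragmentable, yet $\wone$ lies in the closure of no countable subset of $[0,\wone)$), and your final convergence-forcing step is exactly the part you identify as an obstacle. The intended route is much shorter and runs through part (i): if $A\subseteq L\cup\{\infty\}$ is sequentially closed, then $A$ is sequentially compact (every sequence in $A$ has a subsequence converging in the sequentially compact space $L\cup\{\infty\}$, and the limit stays in $A$), hence countably compact; $A$ inherits {\st} as a subspace; so by \Cref{th:star_properties}~\ref{th:star_properties_1} $A$ is compact and therefore closed. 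Thus $L\cup\{\infty\}$ is a sequential space, and sequential spaces are countably tight, which gives both conclusions of \ref{th:star_properties_2} at once. You should restructure part (ii) around this reduction and find a proof of part (i) that works directly with the families $\mathscr{U}_n$ of \Cref{df:star} rather than with fragmentability.
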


\Cref{th:star_properties} \ref{th:star_properties_1} yields a quick proof of the fact that $\wone$ fails to have {\st}, given that this space is countably compact but not compact. This can also be seen in a more elementary way by using the pressing down lemma in an elaboration of the proof of \Cref{pr:wone} -- see \cite{smith:troyanski:10}*{Example 1}. As a byproduct, we see that the class of spaces having {\st} is strictly included in the class of fragmentable spaces. In addition, combining this with \Cref{pr:strictly_convex_star}, we derive the result of Talagrand stating that the dual Banach space $C(\omega+1)^*$ admits no equivalent strictly convex dual norm.

Examples of scattered, locally compact, non-Gruenhage spaces having $G_\delta$-diagonals (and thus having {\st} also) were constructed in \cite{orihuela:smith:troyanski:12}*{Example 2}, assuming additional axioms. A little later, a ZFC example was found.

\begin{theorem}[\cite{smith:11}*{Theorem 2.4}]\label{th:diagonal_non-Gruenhage}
 There is a scattered, locally compact non-Gruenhage space $L$ having a $G_\delta$-diagonal. Consequently, $L \cup \{\infty\}$ has {\st} and is non-Gruenhage.
\end{theorem}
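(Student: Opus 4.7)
The ``Consequently'' clause is essentially immediate from the main construction and earlier results. Since $L$ is locally compact, it is open in its one-point compactification $L \cup \{\infty\}$; because $L$ has a $G_\delta$-diagonal, the remark following \Cref{pr:Gruenhage_implies_star} gives that $L \cup \{\infty\}$ has \st. For the second part, Gruenhage-ness is hereditary by \Cref{th:gruenhage_stability} \ref{th:gruenhage_stability_1}, so if $L \cup \{\infty\}$ were Gruenhage then so would be its subspace $L$, contradicting the hypothesis on $L$. Thus the substance of the theorem is really the construction of $L$ itself.

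The plan for this construction is to take $L$ to be the interval topology on a suitable tree $\Upsilon$. As illustrated in \Cref{ex:sigma_Q}, any tree with its interval topology is automatically scattered and locally compact, and its basic open sets have a clean combinatorial description (singletons at successor nodes, short intervals of the form $\{y : u \prec y \preccurlyeq x\}$ at limits). So the problem reduces to selecting a tree $\Upsilon$ that simultaneously (a) is non-Gruenhage as a topological space, and (b) has a $G_\delta$-diagonal, meaning that there are countably many families of open subsets of $\Upsilon$ such that any two distinct nodes are separated in the $T_1$ sense by some family. For condition (a), one can invoke the complete characterization of trees whose interval topologies fail to be Gruenhage given in \cite{smith:09}*{Corollary 17}, and choose $\Upsilon$ to be something reminiscent of $\sigma\Q$ but arranged to contain sufficiently many pairwise incompatible cofinal chains through prescribed levels so that no countable scheme of open sets, as in \Cref{pr:gruenhage_equiv} \ref{pr:gruenhage_equiv_3}, can distinguish among them. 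For condition (b), one exploits the level structure: if each level of $\Upsilon$ is countable, then one can build countably many open covers by iterating the isolated-successor/limit-cone construction level by level, witnessing $T_1$-separation of all distinct pairs.

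The main obstacle is that conditions (a) and (b) pull in opposite directions: failing Gruenhage demands that $\Upsilon$ be rich enough to overwhelm every countable collection of $\sigma$-distributively point-finite families, while possessing a $G_\delta$-diagonal demands that countably many open covers still suffice to witness distinctness of points. The earlier examples in \cite{orihuela:smith:troyanski:12}*{Example 2} secured the former only at the cost of additional set-theoretic axioms; the novelty required here is to carry out the construction in ZFC alone. The expected route is a transfinite recursion of height $\omega_1$ building $\Upsilon$ level by level with strict control on two parameters---level widths kept countable (to ensure the $G_\delta$-diagonal) and long-branch structure engineered to defeat any potential Gruenhage cover. Verifying that this engineering can be performed outright in ZFC, rather than under CH or $\mathfrak{b}=\aleph_1$, is the delicate step, after which the stated consequences for $L \cup \{\infty\}$ follow as in the first paragraph.
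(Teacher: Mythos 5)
Your handling of the ``Consequently'' clause is correct and is exactly what the survey intends: since $L$ is locally compact it is open in $L\cup\{\infty\}$, so the $G_\delta$-diagonal of $L$ yields {\st} for $L\cup\{\infty\}$ by the remark following \Cref{pr:Gruenhage_implies_star}, and \Cref{th:gruenhage_stability} \ref{th:gruenhage_stability_1} transfers non-Gruenhageness from the subspace $L$ up to $L\cup\{\infty\}$. But that is the easy part. The entire content of the theorem is the ZFC existence of $L$, and this you have not proved: the survey itself offers no proof (it simply cites \cite{smith:11}*{Theorem 2.4}), and your second and third paragraphs are a programme rather than an argument. You identify the two requirements, observe that they pull against each other, and then explicitly defer the point at issue (``Verifying that this engineering can be performed outright in ZFC \dots is the delicate step''). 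No concrete tree is exhibited, no countable sequence of open covers witnessing the $G_\delta$-diagonal is produced, and no argument is given that every candidate Gruenhage scheme as in \Cref{pr:gruenhage_equiv} \ref{pr:gruenhage_equiv_3} must fail.

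Moreover, the specific route you sketch is doubtful as stated. A tree of height $\omega_1$ with its interval topology contains its chains as subspaces, and a chain of order type $\omega_1$ is homeomorphic to the ordinal space $\omega_1$, which fails {\st} (see the discussion after \Cref{th:star_properties}) and a fortiori fails to have a $G_\delta$-diagonal; both properties are hereditary. So your tree could have no uncountable branches, i.e.\ it would have to be Aronszajn-like, and even then ``countable levels'' does not obviously deliver a $G_\delta$-diagonal, because basic neighbourhoods at limit nodes are determined by sets of predecessors rather than by levels, so the proposed ``level by level'' covers need a genuine argument. None of this is addressed, and the passage from the consistency results of \cite{orihuela:smith:troyanski:12}*{Example 2} to an outright ZFC construction is precisely where the work of \cite{smith:11} lies. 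As it stands, the proposal establishes only the derived clause and leaves the main assertion unproved.
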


Thus the spaces having {\st} form a class strictly in between the classes of Gruenhage and fragmentable spaces.

Now we see how {\st} is related to the existence of strictly convex dual norms on $C(K)^*$. We begin with a necessary condition.

\begin{proposition}[\cite{orihuela:smith:troyanski:12}*{Proposition 3.2}]\label{pr:strictly_convex_K_star}
 If $C(K)^*$ admits a strictly convex dual norm then $K$ has {\st}.
\end{proposition}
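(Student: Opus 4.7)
The plan is to derive this as essentially a direct consequence of the chain \Cref{pr:strictly_convex_star} $\Rightarrow$ hereditariness of {\st}. A strictly convex dual norm on $C(K)^*$ yields, by \Cref{pr:strictly_convex_star}, families of $w^*$-open subsets of $C(K)^*$ witnessing {\st} for $(C(K)^*,w^*)$. Since (as noted in the text immediately after \Cref{pr:Gruenhage_implies_star}) the analogue of \Cref{th:gruenhage_stability} \ref{th:gruenhage_stability_1} holds for {\st}—hereditariness being a triviality, for if $\mathscr{U}_n$ witnesses {\st} on $X$ and $Y \subseteq X$, then the trace families $\set{U \cap Y}{U \in \mathscr{U}_n}$ witness {\st} on $Y$—it suffices to exhibit $K$ as a subspace of $(C(K)^*, w^*)$.

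For this, I would invoke the Dirac embedding $\delta\colon K \to (C(K)^*, w^*)$, $x \mapsto \delta_x$, which is a homeomorphism onto its image: continuity is evaluation by $w^*$-basic open sets (each $f \in C(K)$ gives $\delta_x(f) = f(x)$ continuous in $x$), injectivity follows because $C(K)$ separates points of $K$, and the inverse is continuous on the compact image $\delta(K)$. Combining these two ingredients completes the proof.

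If one prefers to avoid quoting \Cref{pr:strictly_convex_star}, the same argument can be unrolled directly: given an equivalent dual strictly convex norm $\tndot$ on $C(K)^*$ (with predual norm also denoted $\tndot$) and its unit sphere $S \subseteq C(K)$, set $V_{f,q} = \set{x \in K}{f(x) > q}$ for $f \in S$ and $q \in \Q \cap (0,\infty)$, and $\mathscr{V}_q = \set{V_{f,q}}{f \in S}$. Given distinct $x,y \in K$, treat $\delta_x,\delta_y$ as two distinct points in $(C(K)^*, w^*)$ and run the dichotomy from the proof of \Cref{pr:strictly_convex_star}: if $\tn{\delta_x} = \tn{\delta_y}$, strict convexity picks a rational $q$ lying strictly between $\tn{\tfrac12(\delta_x+\delta_y)}$ and $\tn{\delta_x}$, and then some $f \in S$ realises $f(x) > q$ or $f(y) > q$ while no single $f \in S$ can satisfy both simultaneously; if $\tn{\delta_x} \neq \tn{\delta_y}$, a rational $q$ strictly between the two norms isolates exactly one of $x,y$ inside $\bigcup \mathscr{V}_q$. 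Either way, conditions \ref{df:star_1} and \ref{df:star_2} of \Cref{df:star} are met.

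There is no real obstacle: the main content is already in \Cref{pr:strictly_convex_star}, and the remaining work is the routine observation that the Dirac embedding is a topological embedding together with the trivial hereditariness of {\st}. The only mild care required is to note that one works with an \emph{equivalent} dual norm, so the unit sphere $S$ of its predual norm is still norming for $(C(K)^*, w^*)$ in the sense used above; this is precisely \Cref{pr:lsc}.
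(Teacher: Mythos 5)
Your proof is correct and takes essentially the same route as the paper: the paper's own proof is exactly the composition of the Dirac embedding $x \mapsto \delta_x$ being a homeomorphism onto its image, the hereditariness of {\st} under subspaces, and \Cref{pr:strictly_convex_star}. Your ``unrolled'' variant is just that argument written out explicitly (and the final appeal to \Cref{pr:lsc} is unnecessary --- the supremum formula for the dual norm is simply the definition of a dual norm as used in \Cref{pr:strictly_convex_star}), so nothing further is needed.
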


\begin{proof}
Recall that the map that sends $x \in K$ to the corresponding Dirac measure $\delta_x \in C(K)^*$ is a homeomorphic embedding. Given that {\st} is inherited by subspaces, the conclusion now follows from \Cref{pr:strictly_convex_star}.
\end{proof}

We state the related sufficient condition in \cite{orihuela:smith:troyanski:12} without proof.

\begin{theorem}[\cite{orihuela:smith:troyanski:12}*{Theorem 3.1}]\label{th:scattered_star}
Let $K$ be a scattered compact space having {\st}. Then $C(K)^*$ admits a strictly convex dual (lattice) norm.
\end{theorem}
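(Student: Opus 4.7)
The plan is to follow the architecture of the proof of \Cref{th:Gruenhage_strictly_convex}: build an equivalent dual lattice norm on $C(K)^*$ as a sum of squares of $w^*$-lower semicontinuous lattice seminorms, and verify strict convexity by combining \Cref{lm:parallelogram} with a lattice reduction. Since {\st} lacks the point-finiteness information provided by the Gruenhage hypothesis, the scatteredness of $K$ must carry the load. By the Rudin theorem mentioned in \Cref{sect:prelim}, every $\mu \in C(K)^*$ is purely atomic, so $|\mu|(E) = \sum_{x \in E}|\mu|(\{x\})$ for every Borel set $E \subseteq K$, and $\supp \mu$ is countable.

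Let $(\mathscr{U}_n)_{n \in \omega}$ witness {\st}. I would first enrich this sequence so that $\bigcup_{n\in\omega} \mathscr{U}_n$ is closed under finite intersections; this is harmless because intersection shrinks sets and thus preserves both clauses of \Cref{df:star}. For $n,k \in \omega$ I then define
\[
 \pn{\mu}{n,k}^2 \;=\; \sup \Bigl\{\sum_{U \in \mathscr{F}} |\mu|(U)^2 \;:\; \mathscr{F} \subseteq \mathscr{U}_n,\ \card \mathscr{F} \leq k \Bigr\},
\]
which is $w^*$-lower semicontinuous by \eqref{eq:lsc} (since $\mu\mapsto|\mu|(U)=\mu^+(U)+\mu^-(U)$ is), depends only on $|\mu|$, and satisfies $\pn{\mu}{n,k}\leq \sqrt{k}\,\pn{\mu}{1}$. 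Setting
\[
 \tn{\mu}^2 \;=\; \pn{\mu}{1}^2 + \sum_{n,k \in \omega} 2^{-n-k}\pn{\mu}{n,k}^2
\]
gives an equivalent dual lattice norm on $C(K)^*$ by \Cref{pr:lsc}.

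For strict convexity, assume $\tn{\mu}=\tn{\nu}=\tn{\lambda}$ with $\lambda := \tfrac{1}{2}(\mu+\nu)$; by \Cref{lm:parallelogram}, $\pn{\mu}{n,k}=\pn{\nu}{n,k}=\pn{\lambda}{n,k}$ for every $n,k$, and $\pn{\mu}{1}=\pn{\nu}{1}$. The first subgoal is $|\mu|=|\nu|$. If this fails, atomicity produces an atom $x$ with $|\mu|(\{x\}) \neq |\nu|(\{x\})$, and $S := \{z \in K : |\mu|(\{z\}) \neq |\nu|(\{z\})\}$ is countable with $x \in S$. Enumerating $S\setminus\{x\}$ and applying {\st} to the pairs $(x,z)$, then using closure of $\bigcup_{n\in\omega}\mathscr{U}_n$ under finite intersection, I would produce $V \in \bigcup_{n\in\omega}\mathscr{U}_n$ with $x \in V$ and $|\mu|(V)\neq|\nu|(V)$, by excluding enough atoms $z \in S\setminus\{x\}$ from $V$ that the tail sum $\sum_{z \in V \cap S\setminus\{x\}} (|\mu|(\{z\})-|\nu|(\{z\}))$ is too small in absolute value to cancel the nonzero jump at $x$ (possible by absolute summability of $|\mu|(\{\cdot\})$ and $|\nu|(\{\cdot\})$).

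The main obstacle is converting this single pointwise inequality $|\mu|(V)\neq|\nu|(V)$ into a contradiction with the equalities $\pn{\mu}{n,k}=\pn{\nu}{n,k}$: those equalities only pin down the sorted multisets of values $\{|\mu|(U)\}_{U \in \mathscr{U}_n}$, not the values pointwise across $\mathscr{U}_n$. I expect to close this gap by enriching the seminorm family, either by inserting additional seminorms of the form $\sup\{|\mu|(\bigcup\mathscr{F}) : \mathscr{F}\subseteq\mathscr{U}_n,\ \card\mathscr{F}\leq m\}$ in the style of \Cref{th:Gruenhage_strictly_convex} and using the enrichment to make the maximizing subfamily unique, or by weighting each $U \in \mathscr{U}_n$ by an auxiliary parameter (e.g.\ a rational discretization or an ordinal rank coming from a fixed well-ordering) so that the combined equalities force $|\mu|(U)=|\nu|(U)$ pointwise for every $U \in \bigcup_{n\in\omega}\mathscr{U}_n$. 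Once this is obtained, atomicity together with the separating property of $\bigcup_{n\in\omega}\mathscr{U}_n$ yields $|\mu|(\{z\})=|\nu|(\{z\})$ for every atom $z$, whence $|\mu|=|\nu|$. Running the same argument for the pair $(\mu,\lambda)$ gives $|\mu|=|\lambda|=\tfrac{1}{2}|\mu+\nu|$, and the concluding lattice calculation of the proof of \Cref{th:Gruenhage_strictly_convex} then forces $\mu=\nu$.
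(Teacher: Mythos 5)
The survey states \Cref{th:scattered_star} without proof (it defers to \cite{orihuela:smith:troyanski:12}), so the comparison here is with the original argument; but judged on its own terms your proposal stalls exactly where the real work lies, and you say so yourself. The step ``the equalities $\pn{\mu}{n,k}=\pn{\nu}{n,k}=\pn{\lambda}{n,k}$ force $|\mu|(U)=|\nu|(U)$ for every $U\in\bigcup_n\mathscr{U}_n$'' is the heart of the theorem, and neither of your two suggested repairs works as stated. The first (importing the seminorms $\sup\{|\mu|(R_n\cup\bigcup\mathscr{F})\}$ of \Cref{th:Gruenhage_strictly_convex} and arguing that the maximizing subfamily is unique) relies essentially on the fact that in a Gruenhage family the sets $U\setminus R_n$, $U\in\mathscr{U}_n$, are pairwise disjoint: that disjointness is what lets $|\nu|(R_n\cup\bigcup\mathscr{F})-|\nu|(R_n\cup\bigcup\mathscr{G})$ split as a difference of sums over $\mathscr{F}\setminus\mathscr{G}$ and $\mathscr{G}\setminus\mathscr{F}$, and what guarantees that only finitely many members carry mass above a threshold. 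A family witnessing {\st} carries no disjointness information at all --- its members may overlap arbitrarily --- so the suprema need not be attained, maximizers need not be unique, and the additivity computations collapse. The second repair (weights from a well-ordering or a rational discretization) is not developed, and since $\mathscr{U}_n$ may be uncountable it is unclear how to assign weights that keep the resulting functions equivalent $w^*$-lower semicontinuous seminorms while still pinning down $|\mu|(U)$ pointwise.

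There is also a smaller error earlier in your argument: {\st}, like Gruenhage's property, separates only in the $T_0$ sense, so applying \Cref{df:star} to a pair $(x,z)$ yields a set containing exactly one of the two points, \emph{not necessarily $x$}. In \Cref{ex:sigma_Q}, for instance, no member of the separating family contains $\infty$, so for an atom sitting at such a point your construction of a set $V\ni x$ excluding finitely many other atoms is impossible. (This sub-step is salvageable without it: the $\pi$-system generated by the family, together with $K$ itself, generates a $\sigma$-algebra containing every singleton of the countable set $\supp|\mu|\cup\supp|\nu|$, so if $|\mu|\neq|\nu|$ they must already differ on some member of the $\pi$-system.) For reference, the published proof does not try to recover pointwise equality across the whole family from a countable list of seminorm equalities; roughly, it exploits scatteredness quantitatively --- an atomic measure concentrates all but $\ep$ of its mass on finitely many atoms --- to separate a pair of measures by finitely many $w^*$-open \emph{slices} of the ball of $\ell_1(K)$ built from continuous functions supported in members of the $\mathscr{U}_n$, and then invokes the slice-based renorming machinery discussed after \Cref{pr:strictly_convex_star}. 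Your outer architecture (sum of squares of $w^*$-lower semicontinuous lattice seminorms, \Cref{lm:parallelogram}, and the final lattice reduction from $|\mu|=|\nu|=|\lambda|$ to $\mu=\nu$) is sound, but the theorem is not proved until the middle is filled in.
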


Together, \Cref{th:diagonal_non-Gruenhage,th:scattered_star} show that the converse to \Cref{th:Gruenhage_strictly_convex} is false. \Cref{th:scattered_star} also happens to yield a very partial positive answer to the question of whether {\st} is preserved by taking perfect images. The next result is a very modest generalization of \cite{orihuela:smith:troyanski:12}*{Proposition 4.5}; compare it with \Cref{co:funct_anal} \ref{co:funct_anal_1}.

\begin{proposition}\label{pr:scattered_star_image}
Let $M,K$ be compact, with $K$ scattered and having {\st}, and let $T:C(M)\to C(K)$ be a bounded linear injection. Then $M$ has {\st}.
\end{proposition}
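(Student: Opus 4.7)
My plan is to route through the functional-analytic characterisations of property {\st} developed in this section: first use the hypotheses on $K$ to produce a strictly convex dual norm on $C(K)^*$, then transfer it via $T^*$ to $C(M)^*$, and finally recover {\st} for $M$.

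In detail, Theorem \ref{th:scattered_star} applied to $K$ (scattered and having {\st}) produces an equivalent strictly convex dual lattice norm on $C(K)^*$. Since $T$ is injective, the $w^*$-$w^*$-continuous adjoint $T^*\colon C(K)^* \to C(M)^*$ has $w^*$-dense range. Applying the transfer technique (Proposition \ref{th:transfer}; its more general form stated in \cite{deville:godefroy:zizler:93} permits a dual operator with only $w^*$-dense range), we obtain an equivalent strictly convex dual norm on $C(M)^*$. Proposition \ref{pr:strictly_convex_K_star} then yields that $M$ has {\st}.

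The main obstacle is the transfer step, since $T$ is merely an injection rather than the isomorphic embedding assumed in Corollary \ref{co:funct_anal} \ref{co:funct_anal_1}; in that stronger situation $T^*$ is surjective and the transfer is essentially immediate. Here, with only $w^*$-dense range, the strictly convex dual norm on $C(M)^*$ must be built in the shape $\n{\mu}^2 = \pn{\mu}{1}^2 + \sum_n 2^{-n}\varphi_n(\mu)^2$, where each $\varphi_n$ is a $w^*$-lower semicontinuous seminorm obtained by pulling back the strictly convex dual norm on $C(K)^*$ through well-chosen $w^*$-continuous linear functionals associated to $T$. Strict convexity then propagates via Lemma \ref{lm:parallelogram}, with the $w^*$-density of $\ran T^*$ providing the separation needed to force equality of the measures in $C(M)^*$.
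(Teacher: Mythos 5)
Your proposal follows the same route as the paper's own proof: apply \Cref{th:scattered_star} to $K$, pass to the adjoint, transfer the strictly convex dual norm to $C(M)^*$, and finish with \Cref{pr:strictly_convex_K_star}. However, the obstacle you flag is real and your resolution of it is not correct. The transfer technique of \Cref{th:transfer} requires the dual operator to have \emph{norm}-dense range; the more general statements in \cite{deville:godefroy:zizler:93} do not relax this to $w^*$-dense range, and no such relaxation is possible. Injectivity of $T$ gives only $w^*$-density of $\ran T^*$ (norm-density of $\ran T^*$ is equivalent to injectivity of $T^{**}$, which does not follow from injectivity of $T$). Your sketched construction founders at exactly the point you gesture towards: \Cref{lm:parallelogram} only converts equality of the norms into equality of the seminorms $\varphi_n$, and seminorms built from $\ran T^*$ cannot distinguish two measures whose difference lies at positive norm-distance from $\cl{\ran T^*}^{\ndot}$, so the ``separation'' you invoke from $w^*$-density is simply not available.

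This is not a removable technicality: for a mere bounded linear injection the statement itself fails, so no argument along these lines (including, it must be said, the paper's own, which makes the same assertion that ``$T$ is an injection, hence $T^*$ has dense range'') can succeed. Let $D=\set{d_\alpha}{\alpha\in\wone}$ be discrete of cardinality $\aleph_1$, let $K=D\cup\{\infty\}\cup\{p\}$ be its one-point compactification with one extra isolated point $p$ adjoined, and let $M=[0,\wone]$. Then $K$ is a scattered Eberlein compact space and so has {\st}. Define $T:C(M)\to C(K)$ by $(Tf)(d_\alpha)=f(\alpha+1)-f(\alpha)$, $(Tf)(\infty)=0$ and $(Tf)(p)=f(0)$. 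Continuity of $f$ at limit ordinals shows that $\set{\alpha\in\wone}{|f(\alpha+1)-f(\alpha)|\geq\ep}$ is finite for every $\ep>0$, so $Tf\in C(K)$; moreover $\pn{Tf}{\infty}\leq 2\pn{f}{\infty}$, and $Tf=0$ forces $f=0$ by transfinite induction. Thus $T$ is a bounded linear injection, yet $M$ fails {\st} because it contains $\wone$, which fails {\st} by \Cref{th:star_properties} \ref{th:star_properties_1} (consistently, $C(M)^*$ has no strictly convex dual norm by Talagrand's theorem). The argument is sound precisely when $\ran T^*$ is norm-dense --- for instance when $T$ is an isomorphic embedding as in \Cref{co:funct_anal} \ref{co:funct_anal_1}, which is all that is needed for the application to continuous surjections $\pi:K\to M$ made after the proposition, since there $T^*$ is onto.
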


\begin{proof}
By Theorem \ref{th:scattered_star}, let $\ndot$ be an equivalent strictly convex dual norm on $C(K)^*$. As $T$ is an injection, the dual $T^*:C(K)^*\to C(M)^*$ has dense range. By \Cref{th:transfer}, $C(M)^*$ also admits an equivalent strictly convex dual norm. It follows that $M$ has {\st} by \Cref{pr:strictly_convex_K_star}.
\end{proof}

Therefore, if $M,K$ are compact, with $K$ scattered and having {\st}, and $\pi:K\to M$ is continuous and surjective, then $M$ has {\st} because we can apply \Cref{pr:scattered_star_image} to $T:C(M)\to C(K)$ defined by $Tf= f \circ \pi$.

Now we resume the discussion about {\st} with slices. Let $X^*$ be a dual Banach space. As {\st} is inherited by subspaces, if $(X^*,w^*)$ has {\st} then so do $(B_{X^*},w^*)$ and $(S_{X^*},w^*)$. Moreover, by a straightforward geometric argument (cf.~the proof of \cite{orihuela:smith:troyanski:12}*{Theorem 2.7 (iii) $\Rightarrow$ (ii)}), if $(S_{X^*},w^*)$ has {\st} then so does $(X,w^*)$. Exactly the same equivalences hold when {\st} is replaced by {\st} with slices. Because of this, we shall consider {\st} or {\st} with slices with respect to $(B_{X^*},w^*)$ alone.

According to \cite{orihuela:smith:troyanski:12}*{Theorem 2.7}, if $(B_{X^*},w^*)$ has {\st} with slices, $X^*$ admits an equivalent strictly convex dual norm. Is it the case that, in fact, $(B_{X^*},w^*)$ has {\st} with slices? If so, we would obtain a characterization of strictly convex dual renormability in dual Banach spaces in terms of a purely topological property of $(B_{X^*},w^*)$, giving the sought after analogue of \Cref{th:raja} \ref{th:raja_1}.

Given the usual basis of the $w^*$-topology, $w^*$-open subsets of $B_{X^*}$ can be written as unions of intersections of finitely many $w^*$-open slices of $B_{X^*}$. By the Choquet lemma \cite{fabian:11}*{Lemma 3.69}, every \emph{extreme point} of $B_{X^*}$ has a local base of $w^*$-open slices (with respect to the $w^*$-topology). Given ``enough'' extreme points, it is possible to use this fact to replace $w^*$-open sets with $w^*$-open slices. We say that a Banach space $X$ is an \emph{Asplund space} if every separable subspace of $X$ has a separable dual space (this is not the original definition but one of several equivalent statements). Asplund spaces have been the topic of an enormous volume of work in Banach space theory. It has long been known that, given a compact space $K$, $C(K)$ is an Asplund space if and only if $K$ scattered \cite{fabian:11}*{Theorem 14.25}. In the context of Asplund spaces we can give an answer to the question above.

\begin{theorem}[\cite{smith:20}*{Theorem 1.4}]\label{th:Asplund_star}
Let $X$ be an Asplund space. Then the following are equivalent.
\begin{enumerate}[label={\upshape{(\roman*)}}]
\item $X^*$ admits an equivalent strictly convex dual norm;
\item $(B_{X^*},w^*)$ has {\st} with slices;
\item $(B_{X^*},w^*)$ has {\st}.
\end{enumerate}
\end{theorem}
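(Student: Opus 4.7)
The plan is to close the cycle (i) $\Rightarrow$ (iii) $\Rightarrow$ (ii) $\Rightarrow$ (i). The implication (i) $\Rightarrow$ (iii) is a direct consequence of \Cref{pr:strictly_convex_star} together with the fact (noted just after \Cref{df:star}) that {\st} is inherited by subspaces. The implication (ii) $\Rightarrow$ (i) is precisely the content of \cite{orihuela:smith:troyanski:12}*{Theorem 2.7} already referenced in the preceding discussion. The implication (ii) $\Rightarrow$ (iii) is trivial as a slice is a particular kind of $w^*$-open set. Hence everything reduces to (iii) $\Rightarrow$ (ii), which is the only place where the Asplund hypothesis is used.

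For (iii) $\Rightarrow$ (ii), the goal is to take families $\mathscr{U}_n$ of $w^*$-open subsets of $B_{X^*}$ witnessing {\st} and refine them, in a controlled way, to families consisting entirely of $w^*$-open slices. Two geometric consequences of the Asplund hypothesis are used: (a) $X^*$ has the Radon--Nikodym property, so $(B_{X^*},w^*)$ is $w^*$\emph{-dentable}, i.e.~every nonempty $w^*$-closed bounded subset of $X^*$ admits $w^*$-open slices of arbitrarily small norm-diameter; and (b) by Choquet's lemma, each extreme point of $B_{X^*}$ has a $w^*$-neighborhood basis of $w^*$-open slices of $B_{X^*}$. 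Using (a), I would first build, for each $k \in \omega$, a collection $\mathscr{T}_k$ of $w^*$-open slices of $B_{X^*}$ such that every nonempty $w^*$-closed $E \subseteq B_{X^*}$ meets some $S \in \mathscr{T}_k$ with $\|\cdot\|$-$\diam{E\cap S}<2^{-k}$. Using (b), I would then, inside each $U \in \mathscr{U}_n$, peel off $w^*$-open slices contained in $U$ in a transfinite sequence as in the proof of \Cref{pr:star_implies_fragmentable} and \Cref{th:fragmentability}: at each stage, Choquet supplies a slice of $B_{X^*}$ around an extreme point of the $w^*$-closed residual set that remains inside $U$. Indexing these peeled slices by $(n,k)$ would provide countably many slice-families $\mathscr{V}_{n,k}$.

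The hard part will be handling the possibility that the separating point $f \in U$ is \emph{not} itself an extreme point of $B_{X^*}$, so Choquet's lemma cannot directly furnish a slice around $f$ contained in $U$. The trick is to remember that \Cref{df:star} only demands that $\{f,g\}$ meet $\bigcup\mathscr{V}_m$ and meet each $V\in\mathscr{V}_m$ in at most a singleton: one does not need a slice at $f$ itself. By transfinitely fragmenting $B_{X^*}$ via slices (using (a)), and relabeling these fragments into countably many layers indexed by the norm-diameter scale, every pair of distinct points $f \neq g$ will eventually be separated at some scale by a slice that contains exactly one of them; combining this with the behaviour forced by $\mathscr{U}_n$ inside and outside $U$ yields the required {\st} property with slices. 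Organising the resulting slices into countably many families — and verifying simultaneously conditions \ref{df:star_1} and \ref{df:star_2} for every pair $(f,g)$ — is the main technical bookkeeping, and where Asplund (through dentability and the abundance of extreme points in every $w^*$-closed convex subset of $B_{X^*}$) does the real work.
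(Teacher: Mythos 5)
Your reduction of the theorem to the single implication (iii) $\Rightarrow$ (ii) is correct, and the easy arrows are handled properly: (i) $\Rightarrow$ (iii) follows from \Cref{pr:strictly_convex_star} and heredity of {\st}, (ii) $\Rightarrow$ (i) is \cite{orihuela:smith:troyanski:12}*{Theorem 2.7}, and (ii) $\Rightarrow$ (iii) is trivial. (Note that the survey itself gives no proof of this theorem --- it is quoted from \cite{smith:20} --- so the entire mathematical content you are responsible for is (iii) $\Rightarrow$ (ii), and that is exactly the part your proposal does not actually prove.) You have also correctly identified the two tools that the Asplund hypothesis makes available, namely $w^*$-dentability of bounded subsets of $X^*$ and the Choquet lemma.

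The sketch of (iii) $\Rightarrow$ (ii) has concrete failures. First, the Choquet lemma furnishes slice neighbourhoods only at \emph{extreme points of a compact convex set}, and it furnishes slices \emph{of that set}; the ``$w^*$-closed residual set that remains inside $U$'' in your peeling is not convex (nor is $U\cap B_{X^*}$), it need not contain any extreme point of $B_{X^*}$, and a slice of the residual set is not a slice of $B_{X^*}$, so this step does not produce the slices you need. Second, and more fundamentally, ``transfinitely fragmenting $B_{X^*}$ via slices and relabeling into countably many layers'' establishes only Ribarska's criterion (\Cref{th:fragmentability}), i.e.\ fragmentability: for each pair you get \emph{some} slice at \emph{some} ordinal stage separating the pair. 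Property {\st} demands much more --- countably many \emph{families}, one of which must satisfy \Cref{df:star}~\ref{df:star_1} and \ref{df:star_2} simultaneously, so that \emph{every} member of that family meets $\{f,g\}$ in at most a singleton --- and the paper stresses that {\st} is strictly stronger than fragmentability (e.g.\ $\wone$). Your collections $\mathscr{T}_k$, built to dent every $w^*$-closed subset, will inevitably contain large slices containing both $f$ and $g$, so they cannot serve as {\st}-families as stated; and the step where the given families $\mathscr{U}_n$ are supposed to interact with the dentability slices to repair this is exactly the part you defer to ``technical bookkeeping''. That bookkeeping is the theorem; as written, the argument proves nothing beyond fragmentability of $(B_{X^*},w^*)$, which already follows from \Cref{pr:star_implies_fragmentable} without the Asplund hypothesis.
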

	
	We conclude this survey with some problems which, to the best of the author's knowledge, are open.
	
	\begin{problem}\label{pb:star_star_slices}
	 Let $X^*$ be a dual Banach space. If $(B_{X^*},w^*)$ has {\st} then does it have {\st} with slices?
	\end{problem}
	
	\begin{problem}\label{pb:st_generated}
	 Let $X^*$ be a \emph{{\st}-generated} dual space, that is, $X^*=\cl{\aspan}^{\ndot}(K)$, where $(K,w^*)$ is compact and has {\st}. Does $X^*$ admits an equivalent strictly convex dual norm?
	\end{problem}
	
	\begin{problem}\label{pb:ck_star}
	 Let $K$ be a compact space having {\st}. Does $C(K)^*$ admit an equivalent strictly convex dual norm?
	\end{problem}

	\begin{problem}\label{pb:star_perfect_image}
	 Let $X$ be a Hausdorff space having {\st} and let $\pi:X\to Y$ be a perfect map onto a Hausdorff space $Y$. Does $Y$ have {\st}? 
	\end{problem}

	We remark that a positive answer to \Cref{pb:ck_star} would yield an analogue of \Cref{th:raja} \ref{th:raja_2}, and a positive answer to all other stated problems besides \Cref{pb:star_perfect_image}, together with a few others. Indeed, \Cref{pb:st_generated} would follow by repeating the argument in the proof of \Cref{co:gruenhage_generated}. (For the same reason, by \Cref{th:scattered_star} we know that \Cref{pb:st_generated} has a positive answer if $(K,w^*)$ happens to be scattered.) In turn, straightaway we would obtain a positive answer to \Cref{pb:star_star_slices}. In addition, we would also obtain the analogue of \Cref{pr:gruenhage_ball} for compact spaces having {\st}. Using the argument in the proof of \Cref{pr:scattered_star_image}, we would also get a positive answer to \Cref{pb:star_perfect_image} in the case that $X$ is compact. Finally, this would yield the analogue of \Cref{co:funct_anal} for compact spaces having {\st}.
	
	\bibliography{document}
	
\end{document}